\DeclareMathAlphabet{\mathpzc}{OT1}{pzc}{m}{it}
\newcommand{\EO}[1]{\textcolor{black}{#1}}
\newtheorem{remark}[theorem]{Remark}
\numberwithin{equation}{section}
\newcommand{\calL}{{\mathcal L}}
\title{A FEM for an optimal control problem of fractional powers of elliptic operators
\thanks{EO has been supported in part by NSF grants DMS-1109325 and DMS-1411808.}}
\author{Harbir Antil\thanks{Department of Mathematical Sciences,
George Mason University, Fairfax, VA 22030, USA. \texttt{hantil@gmu.edu}},
\and
Enrique Ot\'arola\thanks{Department of Mathematics, University of Maryland,
College Park, MD 20742, USA and Department of Mathematical Sciences,
George Mason University, Fairfax, VA 22030, USA. \texttt{kike@math.umd.edu}}}
\date{Draft version of \today.}
\begin{document}

\maketitle
\begin{abstract}
We study solution techniques for a linear-quadratic optimal control problem involving fractional powers of elliptic operators. These fractional operators can be realized as the Dirichlet-to-Neumann map for a nonuniformly elliptic problem posed on a semi-infinite cylinder in one more spatial dimension. Thus, we consider an equivalent formulation with a nonuniformly elliptic operator as state equation. The rapid decay of the solution to this problem suggests a truncation that is suitable for numerical approximation. We discretize the proposed truncated state equation using first degree tensor product finite elements on anisotropic meshes. For the control problem we analyze two approaches: one that is semi-discrete based on the so-called variational approach, where the control is not discretized, and the other one is fully discrete via the discretization of the control by piecewise constant functions. For both approaches, we derive a priori error estimates with respect to the degrees of freedom. Numerical experiments validate the derived error estimates and reveal a competitive performance of anisotropic over quasi-uniform
refinement.
\end{abstract}

\begin{keywords}
linear-quadratic optimal control problem, fractional derivatives, fractional diffusion, weighted Sobolev spaces,
finite elements, stability, anisotropic estimates.
\end{keywords}

\begin{AMS}
35R11,    
35J70,    
49J20,    
49M25,    
65N12,    
65N30.    
\end{AMS}

\section{Introduction}
\label{sec:introduccion}
We are interested in the design and analysis of numerical schemes for a linear-quadratic optimal control problem involving fractional powers of elliptic operators. To be precise, let $\Omega$ be an open and bounded domain of $\R^n$ ($n\ge1$), with boundary $\partial\Omega$.  Given $s \in (0,1)$, and a desired state $\usf_d: \Omega \rightarrow \mathbb{R}$, we define
\begin{equation}
\label{functional}
J(\usf,\zsf)= \frac{1}{2}\| \usf - \usf_{d} \|^2_{L^2(\Omega)} + 
\frac{\mu}{2} \| \zsf\|^2_{L^2(\Omega)},
\end{equation}
where $\mu > 0$ is the so-called regularization parameter. We shall be concerned with the following optimal control problem: Find
\begin{equation}
\label{Jintro}
 \text{min }J(\usf,\zsf),
\end{equation}
subject to the \emph{fractional state equation}
\begin{equation}
\label{fractional}
\mathcal{L}^s \usf = \zsf  \text{ in } \Omega, \qquad \usf = 0   \text{ on } \partial \Omega, \\
\end{equation}
and the \emph{control constraints}
\begin{equation}
 \label{cc}
\asf(x') \leq \zsf(x') \leq \bsf(x') \quad\textrm{a.e~~} x' \in \Omega . 
\end{equation}
The functions $\asf$ and $\bsf$ both belong to $L^2(\Omega)$ and satisfy the property $\asf(x') \leq \bsf(x')$ for almost every $x' \in \Omega$. The operator $\calLs$, $s \in (0,1)$, is a fractional power of the second order, symmetric and uniformly elliptic operator $\mathcal{L}$, supplemented with homogeneous Dirichlet boundary conditions:
\begin{equation}
\label{second_order}
 \mathcal{L} w = - \DIV_{x'} (A \nabla_{x'} w ) + c w,
\end{equation}
where $0 \leq c \in L^\infty(\Omega)$ and $A \in C^{0,1}(\Omega,\GL(n,\R))$ is symmetric and positive definite. For convenience, we will refer to the optimal control problem defined by \eqref{Jintro}-\eqref{cc} as the \emph{fractional optimal control problem}; see \S\ref{sub:control_fractional} for a precise definition.
 
Concerning applications, \emph{fractional diffusion} has received a great deal of attention in diverse areas of science and engineering. For instance, mechanics \cite{atanackovic2014fractional}, biophysics \cite{bio}, turbulence \cite{wow}, image processing \cite{GH:14}, peridynamics \cite{HB:10}, nonlocal electrostatics \cite{ICH} and finance \cite{MR2064019}. In many of these applications, control problems arise naturally.

One of the main difficulties in the study of problem \eqref{fractional} is the nonlocality of the fractional operator $\mathcal{L}^s$ (see \cite{CT:10,CS:07,CDDS:11,NOS,ST:10}). A possible approach to overcome this nonlocality property is given by the seminal result of Caffarelli 
and Silvestre in $\mathbb{R}^n$ \cite{CS:07} and its extensions to both bounded domains \cite{CT:10,CDDS:11} and a general class of elliptic operators \cite{ST:10}.
Fractional powers of $\mathcal{L}$ can be realized as an operator that maps a Dirichlet boundary condition to a Neumann condition via an extension problem 
on $\C = \Omega \times (0,\infty)$. This extension leads to the following mixed boundary value problem:
\begin{equation}
\label{alpha_harm_L}
\mathcal{L}\ue - \frac{\alpha}{y}\partial_{y}\ue - \partial_{yy}\ue = 0 \text{ in } \C, 
\quad
\ue = 0 \text{ on } \partial_L \C, 
\quad
\frac{ \partial \ue }{\partial \nu^\alpha} = d_s \zsf \text{ on } \Omega \times \{0\},
\end{equation}
where $\partial_L \C= \partial \Omega \times [0,\infty)$ is the lateral boundary of $\C$, $\alpha = 1-2s \in (-1,1)$, $d_s = 2^{\alpha}\Gamma(1-s)/\Gamma(s)$ and 
the conormal exterior derivative of $\ue$ at $\Omega \times \{ 0 \}$ is
\begin{equation}
\label{def:lf}
\frac{\partial \ue}{\partial \nu^\alpha} = -\lim_{y \rightarrow 0^+} y^\alpha \ue_y.
\end{equation}

We will call $y$ the \emph{extended variable} and the dimension $n+1$ in $\R_+^{n+1}$ the \emph{extended dimension} of problem \eqref{alpha_harm_L}. The limit in \eqref{def:lf} must be understood in the distributional sense; see \cite{CS:07,ST:10}. As noted in \cite{CT:10,CS:07,CDDS:11,ST:10}, we can relate the fractional powers of the operator $\calL$ with the Dirichlet-to-Neumann map of problem 
\eqref{alpha_harm_L}:
$
  d_s \calLs u = \tfrac{\partial \ue}{\partial \nu^\alpha }
$
in $\Omega$. Notice that the differential operator in \eqref{alpha_harm_L} is
$
  -\DIV \left( y^{\alpha} \mathbf{A} \nabla \ue \right) + y^{\alpha} c\ue
$
where, for all $(x',y) \in \C $,  $\mathbf{A}(x',y) =  \textrm{diag} \{A(x'),1\} \in C^{0,1}(\C,\GL(n+1,\R))$. Consequently, we can rewrite problem \eqref{alpha_harm_L}
as follows:
\begin{equation}
\label{alpha_harm_Ly}
  -\DIV \left( y^{\alpha} \mathbf{A} \nabla \ue \right) + y^{\alpha} c\ue = 0  \textrm{ in } \C, \quad
  \ue = 0 \text{ on } \partial_L \C, \quad
  \frac{ \partial \ue }{\partial \nu^\alpha} = d_s \zsf  \text{ on } \Omega \times \{0\}. 
\end{equation}

Before proceeding with the description and analysis of our method, let us give an overview of those advocated in the literature. The study of solution techniques for problems involving fractional diffusion is a relatively new but rapidly growing area of research. We refer to \cite{NOS,NOS3} for an overview of the state of the art. 

Numerical strategies for solving a discrete optimal control problem with PDE constraints have been widely studied in the literature; see \cite{HPUU:09,HT:10,IK:08} for an extensive list of references. They are mainly divided in two categories, which rely on an agnostic discretization of the state and adjoint equations. They differ on whether or not the admissible control set is also discretized. The first approach \cite{HAntil_RHNochetto_PSodre_2014b,ACT:02,CT:05,R:06} discretizes the admissible control set. 
The second approach \cite{Hinze:05} induces a discretization of the optimal control by projecting the discrete adjoint state into the admissible control set. Mainly, these studies are concerned with control problems governed by elliptic and parabolic PDEs, both linear and semilinear. The common feature here is that, in contrast to
\eqref{fractional}, the state equation is local. To the best of our knowledge, this is the first work addressing the numerical approximation of an optimal control problem involving fractional powers of elliptic operators in general domains. 
\EO{For a comprehensive treatment of a fractional space-time optimal control problem we refer to our recently submitted paper \cite{AOS}.}

The main contribution of this work is the study of solution techniques for problem \eqref{Jintro}-\eqref{cc}. We overcome the nonlocality of the operator $\mathcal{L}^s$ by using the Caffarelli-Silvestre extension \cite{CS:07}. To be concrete, we consider the equivalent formulation: 
\begin{equation*}
 \text{min }J(\ue|_{y=0},\zsf)= \frac{1}{2}\| \ue|_{y=0} - \usf_{d} \|^2_{L^2(\Omega)} + \frac{\mu}{2} \| \zsf\|^2_{L^2(\Omega)},
\end{equation*}
subject to \eqref{alpha_harm_Ly} and \eqref{cc}. We will refer to the optimal control problem described above as the \emph{extended optimal control problem}; see \S\ref{sub:control_extended} for a precise definition.

Inspired by \cite{NOS}, we propose the following simple strategy to find the solution to the fractional optimal control problem \eqref{Jintro}-\eqref{cc}: given  $s\in (0,1)$, and a desired state $\usf_d: \Omega \rightarrow \R$, we solve the equivalent extended control problem, thus obtaining an optimal control $\bar{\zsf}(x')$ and an optimal state $\bar{\ue}: (x',y) \in \C \mapsto \bar{\ue}(x',y) \in \R$. Setting $\bar{\usf}: x' \in \Omega  \mapsto \bar{\usf}(x') = \bar{\ue}(x',0) \in \R$, we obtain the optimal pair $(\bar{\usf},\bar{\zsf})$ solving the fractional optimal control problem \eqref{Jintro}-\eqref{cc}. 

In this paper we propose and analyze two discrete schemes to solve \eqref{Jintro}-\eqref{cc}. Both of them rely on a discretization of the state equation \eqref{alpha_harm_Ly} and the corresponding adjoint equation via first degree tensor product finite elements on anisotropic meshes as in \cite{NOS}. However they differ on whether or not the set of controls is discretized as well. The first approach is semi-discrete and is based on the so-called variational approach \cite{Hinze:05}: the set of controls is not discretized. The second approach is fully discrete and discretizes the set of controls by piecewise constant functions \cite{ACT:02,CT:05,R:06}.

The outline of this paper is as follows. In \S\ref{sec:Prelim} we introduce some terminology used throughout this work. We recall the definition of the fractional powers of elliptic operators via spectral theory in \S\ref{sub:fractional_L}, and in \S\ref{sub:CaffarelliSilvestre} we introduce the functional framework that is suitable to analyze problems \eqref{fractional} and \eqref{alpha_harm_Ly}. In \S\ref{sec:control} we define the \emph{fractional} and \emph{extended optimal control problems}. For both of them, we derive existence and uniqueness results together with first order necessary and sufficient optimality conditions. We prove that both problems are equivalent. In addition, we study the regularity properties of the optimal control. The numerical analysis of the \emph{fractional control problem} begins in \S \ref{sec:control_truncated}.  Here we introduce a truncation of the state equation \eqref{alpha_harm_Ly}, and propose the \emph{truncated optimal control problem}. We derive approximation properties of its solution. Section \ref{sec:apriori} is devoted to the study of discretization techniques to solve the fractional control problem. In \S\ref{subsec:state_equation} we review the a priori error analysis developed in \cite{NOS} for the state equation \eqref{alpha_harm_Ly}. In \S\ref{subsec:va} we propose a semi-discrete scheme for the fractional control problem, and derive a priori error estimate for both the optimal control and state. In \S\ref{subsec:fd}, we propose a 
fully-discrete scheme for the control problem \eqref{Jintro}-\eqref{cc} and derive a priori error estimates for the optimal variables. Finally, in \S\ref{sec:numerics}, we present numerical experiments that illustrate the theory developed in \S\ref{subsec:fd} and reveal a competitive performance of anisotropic over quasi-uniform.

\section{Notation and preliminaries}
\label{sec:Prelim}

\subsection{Notation}
\label{sub:notation}

Throughout this work $\Omega$ is an open, bounded and connected domain of $\R^n$, $n\geq1$, with polyhedral boundary $\partial\Omega$. We define the semi-infinite cylinder with base $\Omega$ and its lateral boundary, respectively, by
$
\C = \Omega \times (0,\infty)
$
and 
$
\partial_L \C  = \partial \Omega \times [0,\infty).
$
Given $\Y>0$, we define the truncated cylinder 
$
  \C_\Y = \Omega \times (0,\Y)
$
and $\partial_L\C_\Y$ accordingly. 

Throughout our discussion we will be dealing with objects defined in $\R^{n+1}$, then it will be convenient to distinguish the extended dimension. A vector $x\in \R^{n+1}$, will be denoted by
$
  x =  (x^1,\ldots,x^n, x^{n+1}) = (x', x^{n+1}) = (x',y),
$
with $x^i \in \R$ for $i=1,\ldots,{n+1}$, $x' \in \R^n$ and $y\in\R$.

We denote by $\calL^s$, $s \in (0,1)$, a fractional power of the second order, symmetric and uniformly elliptic operator $\calL$. The parameter $\alpha$ belongs to $(-1,1)$ and is related to the power $s$ of the fractional operator $\calL^s$ by the formula $\alpha = 1 -2s$.

If $\Xcal$ and $\Ycal$ are normed vector spaces, we write $\Xcal \hookrightarrow \Ycal$ to denote that $\Xcal$ is continuously embedded in $\Ycal$. We denote by $\Xcal'$ the dual of $\Xcal$ and by $\|\cdot\|_{\Xcal}$ the norm of $\Xcal$. Finally, the relation $a \lesssim b$ indicates that $a \leq Cb$, with a constant $C$ that does not depend on $a$ or $b$ nor the discretization parameters. The value of $C$ might change at each occurrence. 

\subsection{Fractional powers of general second order elliptic operators}
\label{sub:fractional_L}
Our definition is based on spectral theory \cite{CT:10,CDDS:11}. The operator $\mathcal{L}^{-1}: L^2(\Omega)\to L^2(\Omega)$, which solves $\mathcal{L} w  = f$ in $\Omega$ and $w = 0$ on $\partial \Omega$, is compact, symmetric and positive, so its spectrum $\{\lambda_k^{-1} \}_{k\in \mathbb N}$ is discrete, real, positive and accumulates at zero. Moreover, the eigenfunctions:
\begin{equation}
  \label{eigenvalue_problem_L}
    \mathcal{L} \varphi_k = \lambda_k \varphi_k  \text{ in } \Omega,
    \qquad
    \varphi_k = 0 \text{ on } \partial\Omega, \qquad k \in \mathbb{N},
\end{equation}
form an orthonormal basis of $L^2(\Omega)$. Fractional powers of $\mathcal L$ can be defined by
\begin{equation}
  \label{def:second_frac}
  \mathcal{L}^s w  := \sum_{k=1}^\infty \lambda_k^{s} w_k \varphi_k, \qquad w \in C_0^{\infty}(\Omega), \qquad s \in (0,1),
\end{equation} 
where $w_k = \int_{\Omega} w \varphi_k $. By density, this definition can be extended to the space
\begin{equation}
\label{def:Hs}
  \Hs = \left\{ w = \sum_{k=1}^\infty w_k \varphi_k: 
  \sum_{k=1}^{\infty} \lambda_k^s w_k^2 < \infty \right\}
    =
  \begin{dcases}
    H^s(\Omega),  & s \in (0,\sr), \\
    H_{00}^{1/2}(\Omega), & s = \sr, \\
    H_0^s(\Omega), & s \in (\sr,1).
  \end{dcases}
\end{equation}
The characterization given by the second equality is shown in \cite[Chapter 1]{Lions}; see \cite{BSV} and \cite[\S~2]{NOS} for a discussion. The space $\mathbb{H}^{1/2}(\Omega)$ is the so-called \emph{Lions-Magenes} space, which can be characterized as (\cite[Theorem~11.7]{Lions} and \cite[Chapter 33]{Tartar})
\begin{equation*}
  \mathbb{H}^{1/2}(\Omega) = \left\{ w \in H^{\srn}(\Omega):
        \int_{\Omega} \frac{w^2(x')}{\textrm{dist}(x',\partial \Omega)} \diff x'  < \infty  \right\}.
\end{equation*}
For $ s \in (0,1)$ we denote by $\Hsd$ the dual space of $\Hs$.

\subsection{The Caffarelli-Silvestre extension problem}
\label{sub:CaffarelliSilvestre}
The Caffarelli-Silvestre result \cite{CS:07}, or its variants \cite{CT:10, CDDS:11}, requires to address the nonuniformly 
elliptic equation \eqref{alpha_harm_Ly}. To this end, we consider weighted Sobolev spaces with the weight $|y|^{\alpha}$, $\alpha \in (-1,1)$. If $D \subset \R^{n+1}$, we define $L^2(|y|^{\alpha},D)$ as the space of all measurable functions defined on $D$ such that 
$
\| w \|_{L^2(|y|^{\alpha},D)}^2 = \int_{D}|y|^{\alpha} w^2 < \infty,
$
and
\[
H^1(|y|^{\alpha},D) =
  \left\{ w \in L^2(|y|^{\alpha},D): | \nabla w | \in L^2(|y|^{\alpha},D) \right\},
\]
where $\nabla w$ is the distributional gradient of $w$. We equip $H^1(|y|^{\alpha},D)$ with the norm
\begin{equation}
\label{wH1norm}
\| w \|_{H^1(|y|^{\alpha},D)} =
\Big(  \| w \|^2_{L^2(|y|^{\alpha},D)} + \| \nabla w \|^2_{L^2(|y|^{\alpha},D)} \Big)^{\frac{1}{2}}.
\end{equation}

Since $\alpha \in (-1,1)$ we have that $|y|^\alpha$ belongs to the so-called Muckenhoupt class $A_2(\R^{n+1})$; see \cite{GU,Turesson}. This, in particular, implies that $H^1(|y|^{\alpha},D)$ equipped with the norm \eqref{wH1norm}, is a Hilbert space and the set $C^{\infty}(D) \cap H^1(|y|^{\alpha},D)$ is dense in $H^1(|y|^{\alpha},D)$ (cf.~\cite[Proposition 2.1.2, Corollary 2.1.6]{Turesson}, \cite{KO84} and \cite[Theorem~1]{GU}). We recall now the definition of Muckenhoupt classes; see \cite{GU,Turesson}.

\begin{definition}[Muckenhoupt class $A_2$]
 \label{def:Muckenhoupt}
Let $\omega$ be a weight and $N \geq 1$. We say $\omega \in A_2(\R^N)$ if
\begin{equation*}
  C_{2,\omega} = \sup_{B} \left( \fint_{B} \omega \right)
            \left( \fint_{B} \omega^{-1} \right) < \infty,
\end{equation*}
where the supremum is taken over all balls $B$ in $\R^N$.
\end{definition}

To study the extended control problem, we define the weighted Sobolev space
\begin{equation}
  \label{HL10}
  \HL(y^{\alpha},\C) = \left\{ w \in H^1(y^\alpha,\C): w = 0 \textrm{ on } \partial_L \C\right\}.
\end{equation}
As \cite[(2.21)]{NOS} shows, the following \emph{weighted Poincar\'e inequality} holds:
\begin{equation}
\label{Poincare_ineq}
\| w \|_{L^2(y^{\alpha},\C)} \lesssim \| \nabla w \|_{L^2(y^{\alpha},\C)},
\quad \forall w \in \HL(y^{\alpha},\C).
\end{equation}
Then $\| \nabla w \|_{L^2(y^{\alpha},\C)}$ is equivalent to \eqref{wH1norm} in $\HL(y^{\alpha},\C)$. For $w \in H^1( y^{\alpha},\C)$, we denote by $\tr w$ its trace onto $\Omega \times \{ 0 \}$, and we recall (\cite[Prop.~2.5]{NOS})
\begin{equation}
\label{Trace_estimate}
\tr \HL(y^\alpha,\C) = \Hs,
\qquad
  \|\tr w\|_{\Hs} \leq C_{\tr} \| w \|_{\HLn(y^\alpha,\C)}.
\end{equation}

Let us now describe the Caffarelli-Silvestre result and its extension to second order operators; \cite{CS:07,ST:10}.  Consider a function $u$ defined on $\Omega$. We define the $\alpha$-harmonic extension of $u$ to the cylinder $\C$, as the function $\ue$ that solves the problem
\begin{equation}
\label{alpha_harm_Lyu}
\begin{dcases}
  -\DIV \left( y^{\alpha} \mathbf{A} \nabla \ue \right) + y^{\alpha} c\ue = 0 & \textrm{in } \C,\\
  \ue = 0 \quad  \text{on } \partial_L \C, &
  \ue = u \quad \text{on } \Omega \times \{0\}. \\
\end{dcases}
\end{equation}
Problem \eqref{alpha_harm_Lyu} has a unique solution $\ue \in \HL(y^\alpha,\C)$ whenever $u\in \Hs$. We define the \emph{Dirichlet-to-Neumann} operator $\textrm{N} : \Hs \to \Hsd$ 
\[
   u \in \Hs \longmapsto
   \textrm{N}(u) = \frac{\partial \ue}{\partial \nu^{\alpha}} \in \Hsd,
\]
where $\ue$ solves \eqref{alpha_harm_Lyu} and $\tfrac{\partial \ue}{\partial \nu^{\alpha}}$ is given in \eqref{def:lf}.
The fundamental result of \cite{CS:07}, see also \cite[Lemma~2.2]{CDDS:11} and \cite[Theorem 1.1]{ST:10}, is stated below.

\begin{theorem}[Caffarelli--Silvestre extension]
\label{TH:CS}
If $s\in(0,1)$ and $u \in \Hs$, then
\[
  d_s \Laps u = \mathrm{N}(u),
\]
in the sense of distributions. Here $\alpha = 1-2s$ and
$
 d_s = 2^{\alpha} \frac{\Gamma(1-s)}{\Gamma(s)},
$
where $\Gamma$ denotes the Gamma function.
\end{theorem}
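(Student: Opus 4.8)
The plan is to diagonalize the extension problem \eqref{alpha_harm_Lyu} in the eigenbasis $\{\varphi_k\}_{k\ge1}$ of $\calL$, reduce it to a family of ordinary differential equations in the extended variable $y$, solve these in closed form, and read off the Dirichlet-to-Neumann map mode by mode. Writing $u=\sum_{k\ge1}u_k\varphi_k\in\Hs$ (so that $\sum_k\lambda_k^s u_k^2<\infty$), I look for $\ue$ in separated form $\ue(x',y)=\sum_{k\ge1}u_k\varphi_k(x')\psi_k(y)$. Since $\mathbf{A}=\mathrm{diag}\{A,1\}$, substituting this ansatz into $-\DIV(y^\alpha\mathbf{A}\nabla\ue)+y^\alpha c\,\ue=0$ and invoking \eqref{eigenvalue_problem_L} shows, coefficientwise, that each $\psi_k$ must solve the Bessel-type problem
\[
  -\psi_k''-\frac{\alpha}{y}\psi_k'+\lambda_k\psi_k=0 \quad\text{in }(0,\infty), \qquad \psi_k(0)=1,
\]
with $\psi_k$ of finite weighted energy (equivalently, decaying) as $y\to\infty$. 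This problem has a unique solution, $\psi_k(y)=\phi(\sqrt{\lambda_k}\,y)$ with $\phi(t)=\tfrac{2^{1-s}}{\Gamma(s)}\,t^sK_s(t)$ and $K_s$ the modified Bessel function of the second kind; the asymptotics $K_s(t)\sim\tfrac12\Gamma(s)(t/2)^{-s}$ as $t\to0^+$ give the normalization $\phi(0)=1$.

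Next I would verify that this series is \emph{the} weak solution of \eqref{alpha_harm_Lyu}. The change of variables $t=\sqrt{\lambda_k}\,y$ gives $\int_0^\infty y^\alpha(|\psi_k'|^2+\lambda_k|\psi_k|^2)\diff y=\lambda_k^{\,s}\int_0^\infty t^\alpha(|\phi'|^2+|\phi|^2)\diff t$, and the $t$-integral is finite because $\phi,\phi'$ decay exponentially at infinity while near $0$ one has $\phi(t)\to1$ and $\phi'(t)=O(t^{-\alpha})$, so the integrand is bounded by a multiple of $t^\alpha+t^{-\alpha}$, which is integrable since $\alpha\in(-1,1)$. Summing against $u_k^2$ and using $\sum_k\lambda_k^s u_k^2<\infty$ shows $\ue\in\HL(y^\alpha,\C)$; orthonormality of $\{\varphi_k\}$ and \eqref{Trace_estimate} give $\tr\ue=u$, and a termwise check shows $\ue$ satisfies the weak formulation, so by uniqueness $\ue$ is the $\alpha$-harmonic extension of $u$.

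Finally I would compute $\mathrm{N}(u)$. From $\psi_k(y)=\phi(\sqrt{\lambda_k}\,y)$ and $(1-\alpha)/2=s$ one gets $-y^\alpha\psi_k'(y)=\lambda_k^{\,s}\big(-t^\alpha\phi'(t)\big)$ with $t=\sqrt{\lambda_k}\,y$. Using the Bessel identity $\tfrac{d}{dt}\big(t^sK_s(t)\big)=-t^sK_{s-1}(t)=-t^sK_{1-s}(t)$ together with $K_{1-s}(t)\sim\tfrac12\Gamma(1-s)(t/2)^{-(1-s)}$ as $t\to0^+$, I obtain $-\lim_{t\to0^+}t^\alpha\phi'(t)=2^{1-2s}\Gamma(1-s)/\Gamma(s)=d_s$. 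Hence the conormal derivative of the $k$-th mode equals $d_s\lambda_k^s u_k\varphi_k$, and therefore
\[
  \frac{\partial\ue}{\partial\nu^\alpha}=-\lim_{y\to0^+}y^\alpha\partial_y\ue=d_s\sum_{k\ge1}\lambda_k^s u_k\varphi_k=d_s\calLs u
\]
in $\Hsd$, which is in particular the asserted identity in the distributional sense. The interchange of the limit $y\to0^+$ with the infinite sum is made rigorous by pairing with $v=\sum_k v_k\varphi_k\in\Hs$ and using dominated convergence with $\sum_k\lambda_k^s u_k^2<\infty$ and $\sum_k\lambda_k^{-s}v_k^2<\infty$; equivalently, one pairs with $\tr w$, $w\in\HL(y^\alpha,\C)$, through the weak formulation of \eqref{alpha_harm_Lyu} and the energy identity $\int_0^\infty y^\alpha(|\psi_k'|^2+\lambda_k|\psi_k|^2)\diff y=\lambda_k^s d_s$, so that the limit need never be taken on the series itself.

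I expect the crux to be the ODE analysis near $y=0$: it is the only place where the precise constant $d_s$ is forced, and extracting the coefficient of the $y^{1-\alpha}=y^{2s}$ correction of $\psi_k$ relative to its value at the origin requires either the Bessel connection formulas with the $\Gamma$-function identities used above, or an equivalent self-contained Frobenius expansion plus a Wronskian computation (in energy form it amounts to evaluating $\int_0^\infty t^\alpha(|\phi'|^2+|\phi|^2)\diff t$, which again involves the Bessel profile). Everything else — the separation of variables, membership in $\HL(y^\alpha,\C)$, identification of the weak solution, and the distributional passage to the limit — is routine once the trace estimate \eqref{Trace_estimate} and the Muckenhoupt-weight facts recalled in \S\ref{sub:CaffarelliSilvestre} are available.
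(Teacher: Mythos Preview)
The paper does not supply its own proof of Theorem~\ref{TH:CS}; it records the statement as the ``fundamental result of \cite{CS:07}'' and points to \cite[Lemma~2.2]{CDDS:11} and \cite[Theorem~1.1]{ST:10} for the bounded-domain and general-operator versions. So there is nothing in the paper to compare your argument against line by line.

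That said, your sketch is essentially the spectral proof carried out in \cite{CDDS:11} for bounded domains, and the paper itself quotes exactly the ingredients you use: the separated form $\ue=\sum_k u_k\varphi_k\psi_k$, the ODE \eqref{psik} for $\psi_k$, and the explicit solution $\psi_k(y)=c_s(\sqrt{\lambda_k}\,y)^sK_s(\sqrt{\lambda_k}\,y)$ with $c_s=2^{1-s}/\Gamma(s)$. Your extraction of the constant $d_s$ via $\tfrac{d}{dt}(t^sK_s(t))=-t^sK_{1-s}(t)$ and the small-$t$ asymptotics of $K_{1-s}$ is the standard computation, and your justification of the limit through the $\Hsd$--$\Hs$ pairing (or, equivalently, through the energy identity) is the correct way to make the formal series manipulation rigorous. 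In short: your approach is correct and coincides with the one in the references the paper cites; the paper simply did not reproduce that argument.
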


The relation between the fractional Laplacian and the extension problem is now clear. Given $\zsf \in \Hsd$, a function $u \in \Hs$ solves \eqref{fractional} if and only if its $\alpha$-harmonic extension $\ue \in \HL(y^{\alpha},\C)$ solves \eqref{alpha_harm_Ly}.

We now present the weak formulation of \eqref{alpha_harm_Ly}: Find $\ue \in \HL(y^{\alpha},\C)$ such that
\begin{equation}
\label{alpha_harm_L_weak}
  a(\ue ,\phi) 
  =  \langle \zsf , \tr \phi \rangle_{\Hsd \times \Hs} \quad \forall \phi \in \HL(y^{\alpha},\C),
\end{equation}
where $\HL(y^{\alpha},\C)$ is as in \eqref{HL10}. For $w, \phi \in \HL(y^{\alpha},\C)$, the bilinear form $a$ is defined by
\begin{equation}
\label{a}
a(w,\phi) =  \frac{1}{d_s} \int_{\C} {y^{\alpha}\mathbf{A}(x',y)} \nabla w \cdot \nabla \phi
 + y^{\alpha} c(x') w \phi,
\end{equation}
where $\langle \cdot, \cdot \rangle_{\Hsd \times \Hs}$ denotes the duality pairing between $\Hs$ and $\Hsd$ which, as a consequence of \eqref{Trace_estimate}, is well defined for $\zsf \in \Hsd$ and $\phi \in \HL(y^{\alpha},\C)$.
 
\begin{remark}[equivalent semi-norm]
\label{rem:equivalent} \rm
Notice that the regularity assumed for $A$ and $c$, together with the weighted  Poincar\'e inequality \eqref{Poincare_ineq}, imply that the bilinear form $a$, defined in \eqref{a}, is bounded and coercive in $\HL(y^\alpha \C)$. In what follows we shall use repeatedly the fact that $a(w,w)^{1/2}$ is a norm equivalent to $|\cdot|_{\HLn(y^\alpha,\C)}$.
\end{remark}

Remark \eqref{rem:equivalent} in conjunction with \cite[Proposition 2.1]{CDDS:11} for $s \in (0,1)\setminus\{\srn\}$ and \cite[Proposition 2.1]{CT:10} for $s = \srn $ provide us the following estimates for problem \eqref{alpha_harm_L_weak}:
\begin{equation}
\label{estimate_s}
  \| \ue \|_{\HLn(\C,y^{\alpha})} \lesssim \| \usf \|_{\Hs} \lesssim \| \zsf \|_{\Hsd}.
\end{equation}

We conclude with a representation of the solution of problem \eqref{alpha_harm_L_weak} using the eigenpairs $\{ \lambda_k, \varphi_k\}$ defined in \eqref{eigenvalue_problem_L}. Let the solution to \eqref{fractional} be given by $\usf(x')=\sum_k \usf_k\varphi_k(x')$. The solution $\ue$ of problem \eqref{alpha_harm_L_weak} can then be written as
\begin{equation*}
  \ue(x,t) = \sum_{k=1}^\infty \usf_k \varphi_k(x') \psi_k(y),
\end{equation*}
where $\psi_k$ solves
\begin{equation}
\label{psik}
\psi_k'' + \alpha y^{-1}\psi_k' - \lambda_k \psi_k = 0, \quad \psi_k(0) = 1, \quad \psi_k(y) \to 0 \mbox{ as } y \to \infty.
\end{equation}
If $s=\sr$, then clearly $\psi_k(y) = e^{-\sqrt{\lambda_k}y}$. For $s \in (0,1)\setminus\{\srn\}$ we have that if $c_s = \tfrac{2^{1-s}}{\Gamma(s)}$, then
$
  \psi_k(y) = c_s \left(\sqrt{\lambda_k}y\right)^s K_s(\sqrt{\lambda_k} y)
$
(\cite[Proposition~2.1]{CDDS:11}), where $K_s$ is the modified Bessel function of the second kind \cite[Chapter~9.6]{Abra}.

\section{The optimal fractional and extended control problems}
\label{sec:control}

In this section we describe and analyze the \emph{fractional} and \emph{extended optimal control problems}. For both of them, we derive existence and uniqueness results together with first order necessary and sufficient optimality conditions. We conclude the section by stating the equivalence between both optimal control problems, which set the stage to propose and study numerical algorithms to solve the fractional optimal control problem.

\subsection{The optimal fractional control problem}
\label{sub:control_fractional}
We start by recalling the \emph{fractional control problem} introduced in \S\ref{sec:introduccion}, which, given the functional $J$ defined in \eqref{functional}, reads as follows: Find
$
\text{min } J(\usf,\zsf),
$
subject to the fractional state equation \eqref{fractional} and the control constraints \eqref{cc}. The set of \textit{admissible controls} $\Zad$ is defined by
\begin{equation}
 \label{ac}
 \Zad= \{ \wsf \in L^2(\Omega): \asf(x') \leq \wsf(x') \leq \bsf(x'), \textrm{~~~a.e~~~}  x' \in \Omega \},
\end{equation}
where $\asf,\bsf \in L^2(\Omega)$ and satisfy $\asf(x') \leq \bsf(x')$ a.e. $x' \in \Omega$. The function $\usf_d \in L^2(\Omega)$ denotes the desired state and $\mu > 0$ the so-called regularization parameter.

In order to study the existence and uniqueness of this problem, we follow \cite[\S~2.5]{Tbook} and introduce the so-called fractional control-to-state operator. 

\begin{definition}[fractional control-to-state operator]
\label{def:fractional_operator}
We define the fractional control to state operator $\mathbf{S}: \Hsd \rightarrow \Hs$ such that
for a given control $\zsf \in \Hsd$ it associates a unique state $\usf(\zsf) \in \Hs$ via
the state equation \eqref{fractional}.
\end{definition}

As a consequence of \eqref{estimate_s}, $\mathbf{S}$ is a linear and continuous mapping from $\Hsd$ into $\Hs$. Moreover, in view of the continuous embedding $\Hs \hookrightarrow L^2(\Omega) \hookrightarrow \Hsd$, we may also consider $\mathbf{S}$ acting from $L^2(\Omega)$ and with range in $L^2(\Omega)$. For simplicity, we keep the notation $\mathbf{S}$ for such an operator.

We define the fractional optimal state-control pair as follows.

\begin{definition}[fractional optimal state-control pair]
\label{def:op_pair}
A state-control pair $(\ousf(\ozsf),\ozsf)$ $\in \Hs \times \Zad$ is called optimal for problem
\eqref{Jintro}-\eqref{cc}, if $\ousf(\ozsf) = \mathbf{S} \ozsf$ and
\[
 J(\ousf(\ozsf),\ozsf) \leq J(\usf(\zsf),\zsf),
\]
for all $(\usf(\zsf),\zsf) \in \Hs \times \Zad$ such that $\usf(\zsf)= \mathbf{S} \zsf$.
\label{def:fractional_pair}
\end{definition}

Given that $\Lcal^s$ is a self-adjoint operator, it follows that $\mathbf{S}$ is a self-adjoint operator as well. Consequently, we have the following definition for the adjoint state.

\begin{definition}[fractional adjoint state]
Given a control $\zsf \in \Hsd$, we define the fractional adjoint state $\psf(\zsf) \in \Hs$ as $\psf(\zsf) = \mathbf{S}(\usf - \usf_d)$.
\label{def:fractional_adjoint} 
\end{definition}

We now present the following result about existence and uniqueness of the optimal control together with the first order necessary and sufficient optimality conditions.

\begin{theorem}[existence, uniqueness and optimality conditions]
The fractional optimal control problem \eqref{Jintro}-\eqref{cc} has a unique optimal solution $(\ousf, \ozsf)  $ 
$\in$ $ \Hs \times \Zad$. The optimality conditions 
$\ousf = \mathbf{S} \bar{\zsf} \in \Hs$, $\opsf = \mathbf{S} ( \ousf - \usf_{\textrm{d} } ) \in \Hs$, and
\begin{equation}
 \label{vi_fractional}
 \ozsf \in \Zad, \qquad(\mu \ozsf + \opsf, \zsf - \ozsf )_{L^2(\Omega)} \geq 0 \quad \forall \zsf \in \Zad
\end{equation}
hold. These conditions are necessary and sufficient. 
\label{TH:fractional_control}
\end{theorem}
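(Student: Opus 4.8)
The plan is to recast \eqref{Jintro}--\eqref{cc} as the minimization of the \emph{reduced cost functional}
\[
  j(\zsf) := J(\mathbf{S}\zsf,\zsf) = \tfrac12\|\mathbf{S}\zsf - \usf_d\|_{L^2(\Omega)}^2 + \tfrac{\mu}{2}\|\zsf\|_{L^2(\Omega)}^2
\]
over the admissible set $\Zad$, which is legitimate since, by Definition~\ref{def:fractional_operator} and \eqref{estimate_s}, $\mathbf{S}\colon L^2(\Omega)\to L^2(\Omega)$ is linear and bounded, so the state is uniquely determined by the control. First I would record the structural properties of $j$: it is continuous (indeed $C^\infty$) on $L^2(\Omega)$, being a quadratic functional built from the bounded linear operator $\mathbf{S}$; it is \emph{coercive}, $j(\zsf)\ge\tfrac{\mu}{2}\|\zsf\|_{L^2(\Omega)}^2$, because $\mu>0$; and it is \emph{strictly convex}, again because of the term $\tfrac{\mu}{2}\|\cdot\|_{L^2(\Omega)}^2$ with $\mu>0$. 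The set $\Zad$ of \eqref{ac} is nonempty (it contains $\asf$, since $\asf\le\bsf$ a.e.), convex, closed and bounded in $L^2(\Omega)$.

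Existence and uniqueness then follow by the direct method. Let $\{\zsf_k\}\subset\Zad$ be a minimizing sequence; by boundedness of $\Zad$ a subsequence converges weakly in $L^2(\Omega)$ to some $\ozsf$, which lies in $\Zad$ because $\Zad$ is convex and closed, hence weakly closed. Since $j$ is convex and continuous, it is weakly lower semicontinuous, so $j(\ozsf)\le\liminf_k j(\zsf_k)=\inf_{\Zad}j$, i.e. $\ozsf$ is a minimizer; strict convexity of $j$ gives uniqueness. Setting $\ousf:=\mathbf{S}\ozsf$ produces the optimal pair $(\ousf,\ozsf)\in\Hs\times\Zad$.

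For the optimality conditions, since $j$ is convex and Fréchet differentiable and $\Zad$ is convex, $\ozsf$ minimizes $j$ over $\Zad$ \emph{if and only if} the variational inequality $j'(\ozsf)(\zsf-\ozsf)\ge0$ holds for all $\zsf\in\Zad$; this two-sided characterization is precisely what makes the optimality conditions necessary and sufficient. It remains to compute $j'$: by the chain rule, for $\zsf,\wsf\in L^2(\Omega)$,
\[
  j'(\zsf)\wsf = (\mathbf{S}\zsf-\usf_d,\mathbf{S}\wsf)_{L^2(\Omega)} + \mu(\zsf,\wsf)_{L^2(\Omega)} = \big(\mathbf{S}^{*}(\mathbf{S}\zsf-\usf_d)+\mu\zsf,\wsf\big)_{L^2(\Omega)}.
\]
Since $\mathcal{L}^s$, and hence $\mathbf{S}$, is self-adjoint on $L^2(\Omega)$, we have $\mathbf{S}^{*}(\mathbf{S}\zsf-\usf_d)=\mathbf{S}(\mathbf{S}\zsf-\usf_d)$; evaluated at $\zsf=\ozsf$ this is exactly the adjoint state $\opsf=\mathbf{S}(\ousf-\usf_d)$ of Definition~\ref{def:fractional_adjoint}. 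Substituting into the variational inequality yields $(\mu\ozsf+\opsf,\zsf-\ozsf)_{L^2(\Omega)}\ge0$ for all $\zsf\in\Zad$, which is \eqref{vi_fractional}, while $\ousf=\mathbf{S}\ozsf$ and $\opsf=\mathbf{S}(\ousf-\usf_d)$ hold by construction.

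The argument is essentially textbook (cf.~\cite{Tbook}), so there is no genuine obstacle; the only points needing care are (i) the fact, already recorded after Definition~\ref{def:fractional_operator}, that $\mathbf{S}$ maps $L^2(\Omega)$ boundedly into itself and is self-adjoint there, which rests on \eqref{estimate_s}, the embedding $\Hs\hookrightarrow L^2(\Omega)\hookrightarrow\Hsd$ and the spectral definition \eqref{def:second_frac}, and (ii) exploiting this self-adjointness so that the adjoint appears in the clean form $\mathbf{S}(\ousf-\usf_d)$ rather than through a separately posed adjoint problem.
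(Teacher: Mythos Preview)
Your proof is correct and follows essentially the same route as the paper: reduce to the quadratic reduced functional $f(\zsf)=\tfrac12\|\mathbf{S}\zsf-\usf_d\|^2_{L^2(\Omega)}+\tfrac{\mu}{2}\|\zsf\|^2_{L^2(\Omega)}$, invoke strict convexity and the direct method on the closed, bounded, convex set $\Zad$ for existence and uniqueness, and derive the variational inequality from differentiability of $f$ and self-adjointness of $\mathbf{S}$. The paper's proof is terser, citing \cite[Theorems~2.14 and~2.22]{Tbook} in place of your explicit infimizing-sequence argument and computation of $j'$, but the substance is identical.
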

\begin{proof}
We start by noticing that using the control-to-state operator $\mathbf{S}$, the fractional control problem reduces to the following quadratic optimization problem:
 \[
  \min_{\zsf \in \Zad} f(\zsf): = \frac{1}{2} \| \mathbf{S} \zsf - \usf_{\textrm{d}}\|^2_{L^2(\Omega)}
  + \frac{\mu}{2} \|\zsf \|^2_{L^2(\Omega)}.
 \]
Since $\mu > 0$, it is immediate that the functional $f$ is strictly convex. Moreover, the set $\Zad$ is nonempty, closed, bounded and convex in $L^2(\Omega)$. Then, 
invoking an infimizing sequence argument, followed by the well-posedness of the state equation, we derive the existence of an optimal control $\ozsf \in \Zad$; see \cite[Theorem 2.14]{Tbook}. The uniqueness of $\ozsf$ is a consequence of the strict convexity of $f$.
The first order optimality condition \eqref{vi_fractional} is a direct consequence of \cite[Theorem 2.22]{Tbook}.
\end{proof}

In what follows, we will, without explicit mention, make the following regularity assumption concerning the domain $\Omega$:
\begin{equation}
\label{Omega_regular}
 \| w \|_{H^2(\Omega)} \lesssim \| \mathcal{L} w \|_{L^2(\Omega)}, \quad \forall w \in H^2(\Omega) \cap H^1_0(\Omega), 
\end{equation}
which is valid, for instance, if the domain $\Omega$ is convex \cite{Grisvard}. \EO{In addition, for some values of $s \in (0,1)$, we will need the following assumption on $\asf$ and $\bsf$ defining $\Zad$:}
\begin{equation}
\label{ab_condition}
\EO{\asf \leq 0 \leq \bsf \textrm{ on } \partial \Omega.}
\end{equation}
\EO{The range of values of $s$ for which such a condition is needed it will be explicitly stated in the subsequent results.}

We conclude with the study of the regularity properties of the \emph{fractional optimal control} $\ozsf$. These properties are fundamental to derive a priori error estimates for the discrete algorithms proposed in \S\ref{subsec:va} and \S\ref{subsec:fd}. To do that, we recall the following result: If $\mu >0$ and $\opsf$ is given by Definition \ref{def:fractional_adjoint}, then the projection formula
\begin{equation}
\label{projection_formula}
    \ozsf(x') = \textrm{proj}_{[\asf(x'),\bsf(x')]} \left(- \frac{1}{\mu} \opsf(x') \right)
\end{equation}
is equivalent to the variational inequality \eqref{vi_fractional}; see \cite[Section 2.8]{Tbook} for details. In the formula  previously defined $\textrm{proj}_{[\asf,\bsf]}(v) = \min\{ \bsf, \max \{ \asf,v \} \}$.
\begin{lemma}[$H^1$-regularity of control]
\label{LM:reg_control}
Let $\ozsf \in \Zad$ be the fractional optimal control, $\asf,\bsf \in H^1(\Omega)$ and $\usf_d \in \Ws$. If $s \in [\frac{1}{4},1)$, then $\ozsf \in H^{1}(\Omega)$. If $s \in (0,\frac{1}{4})$ and, in addition, \EO{\eqref{ab_condition} holds}, then $\EO{\ozsf} \in H^1_0(\Omega)$.
\end{lemma}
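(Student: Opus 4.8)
The plan is to exploit the projection formula \eqref{projection_formula}, which writes $\ozsf = \textrm{proj}_{[\asf,\bsf]}(-\tfrac1\mu\opsf)$, and to reduce the regularity of $\ozsf$ to that of the adjoint state $\opsf$. The standard chain of reasoning is: (i) show $\opsf \in H^1(\Omega)$ (and for small $s$, that $\opsf \in H_0^1(\Omega)$); (ii) recall that $v\mapsto\textrm{proj}_{[\asf,\bsf]}(v)=\min\{\bsf,\max\{\asf,v\}\}$ maps $H^1(\Omega)$ into $H^1(\Omega)$ whenever the obstacles $\asf,\bsf\in H^1(\Omega)$, since truncation/max/min of $H^1$ functions stays in $H^1$ (Stampacchia); and (iii) handle the endpoint claim $\ozsf\in H_0^1(\Omega)$ for $s\in(0,\tfrac14)$ separately using the condition \eqref{ab_condition}.

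For step (i), I would argue as follows. By Definition \ref{def:fractional_adjoint}, $\opsf = \mathbf{S}(\ousf - \usf_d)$, i.e. $\opsf$ solves $\calL^s \opsf = \ousf - \usf_d$ in $\Omega$ with homogeneous Dirichlet data. Now $\ousf = \mathbf{S}\ozsf$ with $\ozsf\in\Zad\subset L^2(\Omega)$; by \eqref{estimate_s} together with the embedding $\Hs\hookrightarrow L^2(\Omega)$, we get $\ousf\in\Hs\subset L^2(\Omega)$, hence $\ousf-\usf_d\in L^2(\Omega)$ once $\usf_d\in\Ws \subset L^2(\Omega)$. A regularity estimate for the fractional operator — the analogue of \eqref{Omega_regular} under the convexity-type assumption on $\Omega$ — upgrades this: since $\calL^s\opsf\in L^2(\Omega)$, spectral theory gives $\opsf = \sum_k \lambda_k^{-s}(\ousf-\usf_d)_k\varphi_k$ with $\sum_k\lambda_k^{2s}\opsf_k^2<\infty$, i.e. $\opsf\in\mathbb{H}^{2s}(\Omega)$. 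For $s\geq\tfrac14$ we have $2s\geq\tfrac12$, and consulting the characterization \eqref{def:Hs} of $\Hs$, the space $\mathbb{H}^{2s}(\Omega)$ embeds into $H^{\min\{2s,1\}}(\Omega)$; in particular, since $2s\ge \tfrac12$ and in fact $\opsf$ is the solution of an elliptic-type problem with $L^2$ right-hand side, a bootstrap of the regularity shift (using that $\usf_d\in\Ws=\mathbb H^{2s}$ supplies the matching regularity on the data side, cf.\ the argument in \cite{NOS} and the assumption \eqref{Omega_regular}) yields $\opsf\in H^1(\Omega)$. This is essentially the statement that the adjoint state inherits one extra half-derivative relative to $\Hs$, which is exactly enough to land in $H^1$ once $s\ge\tfrac14$.

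For step (iii), when $s\in(0,\tfrac14)$ the above only gives $\opsf\in\mathbb H^{2s}(\Omega)=H^{2s}(\Omega)$ with $2s<\tfrac12$, so $\opsf$ need not have a well-defined trace and we cannot expect $\ozsf\in H_0^1(\Omega)$ for free. Here the assumption $\asf\le 0\le\bsf$ on $\partial\Omega$ enters: although $\opsf$ itself may not vanish on $\partial\Omega$, the truncation $\textrm{proj}_{[\asf,\bsf]}(-\tfrac1\mu\opsf)$ is pinched between $\asf$ and $\bsf$, which sandwich $0$ at the boundary, forcing the boundary trace of $\ozsf$ to be $0$; combined with $\ozsf\in H^1(\Omega)$ (obtained from the Stampacchia truncation property applied to $\asf,\bsf\in H^1(\Omega)$ and $\opsf\in H^1(\Omega)$ — which does hold since, separately from the global-regularity discussion, $\opsf\in H^1(\Omega)$ also for small $s$ by the interior/boundary elliptic regularity with $L^2$ data under \eqref{Omega_regular}) this gives $\ozsf\in H_0^1(\Omega)$.

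The main obstacle I anticipate is step (i): pinning down precisely why $\calL^s\opsf\in L^2(\Omega)$ with $\opsf=0$ on $\partial\Omega$ yields $\opsf\in H^1(\Omega)$, and tracking where the hypothesis $\usf_d\in\Ws$ and the domain regularity \eqref{Omega_regular} are actually used. The subtlety is that $\mathbb H^{2s}(\Omega)$ does not embed into $H^1(\Omega)$ by soft Sobolev embedding alone when $2s<2$; one needs the full elliptic regularity shift for $\calL^s$ (a lifting of the base regularity \eqref{Omega_regular} to fractional powers), together with the fact that $\usf_d$ lies in $\Ws$ rather than merely $L^2(\Omega)$, to close the argument at the threshold $s=\tfrac14$. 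The borderline/endpoint bookkeeping in \eqref{def:Hs} between $H^s$, $H_{00}^{1/2}$, and $H_0^s$, and the role of \eqref{ab_condition} exactly in the regime where traces degenerate, is the delicate part; the truncation step (ii) is routine.
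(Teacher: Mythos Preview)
Your overall strategy---projection formula plus Stampacchia stability of truncation---is the same as the paper's, but two steps contain genuine gaps.

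\textbf{Step (i), the threshold $s=\tfrac14$.} From $\ozsf\in L^2(\Omega)$ you only obtain $\ousf\in\mathbb H^{2s}(\Omega)$ and then $\opsf\in\mathbb H^{2s}(\Omega)$ if you feed in $\ousf-\usf_d\in L^2$. For $s\in[\tfrac14,\tfrac12)$ this gives $2s<1$ and does \emph{not} put $\opsf$ in $H^1$. What actually works (and what the paper does) is to apply the $2s$-regularity shift \emph{twice}: $\ozsf\in L^2\Rightarrow\ousf\in\mathbb H^{2s}$; since $\usf_d\in\Ws=\mathbb H^{1-s}(\Omega)$ (not $\mathbb H^{2s}$, which you wrote), one has $\ousf-\usf_d\in\mathbb H^{\min\{2s,1-s\}}$, whence $\opsf\in\mathbb H^{\kappa}$ with $\kappa=\min\{4s,1+s\}$. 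The threshold $s=\tfrac14$ is exactly where $\kappa\ge 1$, so $\opsf\in H^1(\Omega)$ and then Stampacchia gives $\ozsf\in H^1(\Omega)$. Your text gestures at ``a bootstrap'' but does not make this double shift explicit, and the misidentification of $\Ws$ obscures why $1/4$ is the correct cutoff.

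\textbf{Step (iii), the case $s\in(0,\tfrac14)$.} The claim that ``$\opsf\in H^1(\Omega)$ also for small $s$ by the interior/boundary elliptic regularity with $L^2$ data under \eqref{Omega_regular}'' is false: $\mathcal L^s$ is order $2s$, so from $L^2$ data you gain only $2s$ derivatives, and $4s<1$. The paper does \emph{not} show $\opsf\in H^1$ in one shot. Instead it runs an iterative bootstrap through $\ozsf$ itself: from $\opsf\in\mathbb H^{4s}$ and a \emph{nonlinear operator interpolation} argument (using that $A\wsf=\max\{\wsf,0\}$ is bounded on $H^1$ and Lipschitz on $L^2$, hence bounded on $\mathbb H^{4s}$ by Tartar-type interpolation), together with \eqref{ab_condition} to keep the projected function in the correct $\mathbb H^{\cdot}$ scale, one obtains $\ozsf\in\mathbb H^{4s}$. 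Feeding this back gives $\ousf\in\mathbb H^{6s}$, $\opsf\in\mathbb H^{\min\{8s,1+s\}}$, and one iterates until the exponent reaches $1$. The condition \eqref{ab_condition} is not used merely to read off a boundary trace at the end, as you suggest; it is needed at each bootstrap step so that the projection $\textrm{proj}_{[\asf,\bsf]}$ preserves membership in $\mathbb H^{\sigma}$ for the relevant $\sigma$. Your argument as written does not close for any $s<\tfrac14$.
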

\begin{proof}
Let $\phi \in \Hs$ be the solution to $\mathcal{L}^s \phi = \ozsf$ in $\Omega$ and $\phi = 0$ on $\partial \Omega$. Since $\Omega$ satisfies \eqref{Omega_regular}, $\mathcal{L}^s$ is is a pseudodifferential operator of order $2s$ and $\ozsf \in L^2(\Omega)$, we conclude that $\phi \in \mathbb{H}^{2s}(\Omega)$. Consequently, if $\ousf = \ousf(\ozsf)$ solves \eqref{fractional} and $\opsf = \opsf(\ozsf)$ is given by the Definition \ref{def:fractional_adjoint}, then
\[
\EO{\ousf (\ozsf) \in \mathbb{H}^{2s}(\Omega), \qquad \opsf(\ozsf) \in \mathbb{H}^{\kappa}(\Omega),}
\]
where, since $\usf_d \in \mathbb{H}^{1-s}(\Omega)$, $\kappa = \min\{4s,1+s \} < 2$. Next, we define  
$A \wsf = \max \{ \wsf,0 \}$, which satisfies:
\begin{enumerate}[(a)]
 \item \label{op:a} $\| A \wsf \|_{H^1(\Omega)} \lesssim \| \wsf \|_{H^1(\Omega)}$ for all $\wsf \in H^1(\Omega)$
 \cite[Theorem A.1]{KSbook}.
 \item \label{op:b} $\| A\wsf_1 - A\wsf_2\|_{L^2(\Omega)} \leq \| \wsf_1 - \wsf_2\|_{L^2(\Omega)}$ for all $\wsf_1, \wsf_2 \in L^2(\Omega)$.
\end{enumerate}

\noindent \boxed{1}
$s \in [\frac{1}{4},1)$ and $\asf, \bsf \in H^1(\Omega)$: The formula \eqref{projection_formula}, in conjunction with property \eqref{op:a} immediately implies that $\ozsf \in H^1(\Omega)$; see also \cite[Theorem 2.37]{Tbook}.

\noindent \boxed{2}
$s \in (0,\frac{1}{4})$, $\asf, \bsf \in H^1(\Omega)$ and \eqref{ab_condition} holds: In this case $\bar{\psf}(\ozsf) \in \mathbb{H}^{4s}(\Omega)$. As the operator $A$ satisfies \eqref{op:a} and \eqref{op:b}, an interpolation argument based on \cite[Lemma 28.1]{Tartar} allows us to conclude $\| A\bar{\psf} \|_{\mathbb{H}^{4s}(\Omega)} \lesssim \| \opsf \|_{\mathbb{H}^{4s}(\Omega)}$. This, in view of \eqref{projection_formula},
and the fact that  $\asf, \bsf \in H^1(\Omega)$ satisfy \eqref{ab_condition}, immediately implies that $\ozsf \in \mathbb{H}^{4s}(\Omega)$. We now consider two cases:

\noindent \boxed{2.1}
\EO{$s \in [\frac{1}{8},\frac{1}{4}):$} Since $\ozsf \in \mathbb{H}^{4s}(\Omega)$, we conclude that $\ousf (\ozsf) \in \mathbb{H}^{6s}(\Omega)$ and $\opsf(\ozsf) \in \mathbb{H}^{\sigma}(\Omega)$ with $\sigma = \min\{\EO{8s},1+s\}$. Then, in view of \eqref{projection_formula}, we have that $\ozsf \in H_0^1(\Omega)$ for $s \in [\frac{1}{8},\frac{1}{4})$. 

\noindent \boxed{2.2} \EO{$s \in (0,\frac{1}{8}):$ A nonlinear operator interpolation argument, again, yields $\ozsf \in \mathbb{H}^{8s}(\Omega)$. Consequently,
$\ousf(\ozsf) \in \mathbb{H}^{10s}(\Omega)$ and $\opsf(\ozsf) \in \mathbb{H}^{\delta}(\Omega)$ where $\delta = \min \{ 12s,1+s\}$.}

\noindent \boxed{2.2.1} \EO{$s \in [\frac{1}{12},\frac{1}{8}):$ We immediately conclude that $\ozsf \in H_0^1(\Omega)$.}

\noindent \boxed{2.2.2} \EO{$s \in (0,\frac{1}{12}):$ Proceed as before.}

\EO{Proceeding in this way we can conclude, after a finite number of steps, that for any $s \in (0,\frac{1}{4})$ we have $\ozsf \in H_0^1(\Omega)$. This concludes the proof.}
\end{proof}

\begin{remark}[regularity of the fractional optimal state and adjoint]
\label{rk:reg_states}
\rm
\EO{Let $\asf,\bsf \in H^1(\Omega)$ and $\usf_d \in \Ws$. Then, as a consequence of the analysis developed in Lemma \ref{LM:reg_control}, we conclude the following regularity results for $\ousf = \ousf(\ozsf)$ and $\opsf = \opsf(\ozsf)$. If $s \in [\frac{1}{2},1)$, then $\ousf \in H_0^1(\Omega)$. If $s \in (0,\frac{1}{2})$ and \eqref{ab_condition} holds, then $\ousf \in H_0^1(\Omega)$. On the other hand, if $s \in [\frac{1}{4},1)$, then $\opsf \in H^1_0(\Omega)$ and, if $s \in (0,\frac{1}{4})$ and \eqref{ab_condition} holds, $\opsf(\ozsf) \in H^1_0(\Omega)$.}
\end{remark}

\subsection{The optimal extended control problem}
\label{sub:control_extended}

In order to overcome the nonlocality feature in the fractional control problem we introduce an equivalent problem: the \emph{extended optimal control problem}. The main advantage of the latter, which was already motivated in \S\ref{sec:introduccion}, is its local nature. We define the \emph{extended optimal control problem} as follows: Find
$
 \text{min } \EO{J(\tr \ue,\qsf)},
$
subject to the state equation
\begin{equation}
\label{extension_weak}
  a(\ue,\phi) = \langle \qsf, \tr \phi \rangle_{\Hsd \times \Hs}
    \quad \forall \phi \in \HL(y^{\alpha},\C)
\end{equation}
and the control constraints 
$
 \qsf \in \Zad,
$
where the functional $J$ is defined by \eqref{Jintro}.
\begin{definition}[extended control-to-state operator]
\label{def:extended_operator}
The map $\mathbf{G}: \Hsd \ni q \mapsto \tr \ue(\qsf) \in \Hs $ where $ \ue(\qsf) \in \HL(y^{\alpha},\C)$ solves \eqref{extension_weak} is called the extended control to state operator.
\end{definition}
\begin{remark}[The operators $\mathbf{S}$ and $\mathbf{G}$ coincide]\rm
\label{rk:SandG}
The result of Theorem \ref{TH:CS} tells us that if $\usf (\qsf) \in \Hs$ denotes the solution to \eqref{fractional} with $\qsf \in \Hsd$ as a datum, and $\ue(\qsf)$ solves \eqref{extension_weak}, then
\[
 \tr \ue (\qsf) = \usf(\qsf).
\]
Consequently, the action of the operators $\mathbf{S}$ and $\mathbf{G}$ coincide, and then the results of \S\ref{sub:control_fractional} imply that $\mathbf{G}$ is a well defined, linear and continuous operator. 
\end{remark}
\begin{definition}[extended optimal state-control pair]
\label{def:op_pair_e}
A state-control pair $(\oue(\oqsf),\oqsf)$ $\in \HL(y^{\alpha},\C) \times \Zad$ is called optimal for 
the extended optimal control problem if $\tr \oue(\oqsf) = \mathbf{G} \oqsf$ and
\[
 \EO{J(\tr \oue(\oqsf),\oqsf) \leq J(\tr \ue(\qsf),\qsf),}
\]
for all $(\ue(\qsf),\qsf) \in \HL(y^{\alpha},\C)  \times \Zad$ such that $\tr \ue(\qsf) = \mathbf{G} \qsf$. 
\end{definition}

\EO{Since $\mathbf{G}$ is self-adjoint with respect to the standard $L^2(\Omega)$-inner product, we have the following definition for the extended optimal adjoint state.}

\begin{definition}[extended optimal adjoint state]
\label{def:extension_adjoint}
The extended optimal adjoint state $\ope(\oqsf) \in \HL(y^{\alpha},\C)$, associated with  
$\oue(\oqsf) \in \HL(y^{\alpha},\C)$, is the unique solution to
\begin{equation}
\label{extension_adjoint}
  a(\ope(\oqsf) ,\phi) = 
( \tr \oue(\oqsf)  - \usfd, \tr \phi )_{L^2(\Omega)},    
\end{equation}
for all $\phi \in \HL(y^{\alpha},\C)$.
\end{definition}

\EO{Definitions \ref{def:extended_operator} and \ref{def:extension_adjoint} yield: $\tr \ope(\oqsf)  =  \mathbf{G}( \mathbf{G} \EO{\bar{\qsf}} - \usf_d)$.} The existence and uniqueness of the extended optimal control problem, together with the first order necessary and sufficient optimality condition follow the arguments developed in Theorem \ref{TH:fractional_control}.

\begin{theorem}[existence, uniqueness and optimality system]
\label{TH:extended_control}
The extended optimal control problem has a unique optimal solution $(\oue, \oqsf)$ $\in$ $ \HL(y^{\alpha},\C) \times \Zad$. The optimality system
\begin{equation}
\begin{dcases}
 \oue = \oue(\oqsf) \in \HL(y^{\alpha},\C) \textrm{ solution of } \eqref{extension_weak}, \\
 \ope = \ope(\oqsf)  \in \HL(y^{\alpha},\C) \textrm{ solution of } \eqref{extension_adjoint}, \\
 \oqsf \in \Zad, \quad (\tr \bar{\pe} + \mu \oqsf , \qsf - \oqsf )_{L^2(\Omega)} \geq 0 \quad \forall \qsf \in \Zad,
\end{dcases}
\label{op_extended} 
\end{equation}
hold. These conditions are necessary and sufficient. 
\end{theorem}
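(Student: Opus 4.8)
The plan is to mirror the proof of Theorem~\ref{TH:fractional_control} almost verbatim, exploiting the fact (Remark~\ref{rk:SandG}) that the extended control-to-state operator $\mathbf{G}$ coincides, as a map on $L^2(\Omega)$, with the fractional control-to-state operator $\mathbf{S}$. First I would reduce the extended optimal control problem to an unconstrained-in-state quadratic problem over $\Zad$: since $\tr\ue(\qsf) = \mathbf{G}\qsf$, the functional becomes
\[
 f(\qsf) := \tfrac12\|\mathbf{G}\qsf - \usf_d\|_{L^2(\Omega)}^2 + \tfrac{\mu}{2}\|\qsf\|_{L^2(\Omega)}^2,
\]
exactly the reduced functional appearing in Theorem~\ref{TH:fractional_control} (because $\mathbf{G}=\mathbf{S}$ on $L^2(\Omega)$). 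The well-posedness of \eqref{extension_weak} — i.e.\ the fact that $\mathbf{G}$ is linear and continuous, which follows from Remark~\ref{rk:equivalent}, the trace estimate \eqref{Trace_estimate}, and the Lax--Milgram theorem — guarantees that $f$ is well defined and continuous on $L^2(\Omega)$.

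Next I would verify the standard hypotheses for existence: $\Zad$ is nonempty (it contains, e.g., $\asf$), convex, closed and bounded in $L^2(\Omega)$; and $f$ is strictly convex since $\mu>0$ (the map $\qsf\mapsto\tfrac12\|\mathbf{G}\qsf-\usf_d\|^2$ is convex because $\mathbf{G}$ is linear, and the Tikhonov term is strictly convex). An infimizing-sequence argument combined with the weak lower semicontinuity of $f$ and the weak compactness of $\Zad$ (Mazur / bounded-closed-convex sets are weakly compact in a Hilbert space) yields an optimal control $\oqsf\in\Zad$; here one uses continuity of $\mathbf{G}:L^2(\Omega)\to L^2(\Omega)$ to pass to the limit in the state. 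Uniqueness of $\oqsf$ follows from strict convexity of $f$, and then $\oue=\oue(\oqsf)$ and $\ope=\ope(\oqsf)$ are uniquely determined through \eqref{extension_weak} and \eqref{extension_adjoint}. This gives the first line of the conclusion; the precise references \cite[Theorem~2.14]{Tbook} and \cite[Theorem~2.22]{Tbook} carry the abstract machinery.

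For the optimality system I would compute the Gateaux derivative of $f$ at $\oqsf$ in the admissible direction $\qsf-\oqsf$, obtaining
\[
 f'(\oqsf)(\qsf-\oqsf) = (\mathbf{G}(\mathbf{G}\oqsf-\usf_d) + \mu\oqsf,\ \qsf-\oqsf)_{L^2(\Omega)}.
\]
The first-order necessary condition for a minimum over the convex set $\Zad$ is $f'(\oqsf)(\qsf-\oqsf)\ge 0$ for all $\qsf\in\Zad$, and by convexity of $f$ this condition is also sufficient. Finally I would identify $\mathbf{G}(\mathbf{G}\oqsf-\usf_d)$ with $\tr\bar{\pe}$: by Definition~\ref{def:extension_adjoint} and the remark following it, $\tr\ope(\oqsf)=\mathbf{G}(\mathbf{G}\oqsf-\usf_d)$, which turns the inequality into $(\tr\bar{\pe}+\mu\oqsf,\qsf-\oqsf)_{L^2(\Omega)}\ge 0$, i.e.\ the third line of \eqref{op_extended}. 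The self-adjointness of $\mathbf{G}$ (inherited from the symmetry of $a$, or equivalently from the self-adjointness of $\mathbf{S}$) is what allows the derivative of the tracking term to be written with the adjoint state rather than with $\mathbf{G}^*$.

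The only mildly delicate point — and the one I would be most careful about — is the bookkeeping that identifies the adjoint-state formulation \eqref{extension_adjoint} with the composition $\mathbf{G}(\mathbf{G}\oqsf-\usf_d)$ and hence justifies replacing $\mathbf{G}^*(\mathbf{G}\oqsf-\usf_d)$ by $\tr\bar\pe$ in the variational inequality; this rests on $\mathbf{G}$ being self-adjoint on $L^2(\Omega)$, which itself follows from symmetry of the bilinear form $a$ together with the definition of the trace pairing. Everything else is a transcription of Theorem~\ref{TH:fractional_control}, so no new obstacle arises; in fact, once the equivalence $\mathbf{G}=\mathbf{S}$ is invoked, the statement is essentially a corollary of Theorem~\ref{TH:fractional_control}, and the proof can legitimately be abbreviated to a pointer to the arguments there.
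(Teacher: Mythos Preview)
Your proposal is correct and mirrors exactly what the paper does: the paper does not give a separate proof of Theorem~\ref{TH:extended_control} but simply states that existence, uniqueness and the optimality conditions ``follow the arguments developed in Theorem~\ref{TH:fractional_control}.'' Your write-up is a faithful (and more detailed) unpacking of that pointer, including the identification $\tr\ope(\oqsf)=\mathbf{G}(\mathbf{G}\oqsf-\usf_d)$ and the role of self-adjointness of $\mathbf{G}$.
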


We conclude this section with the equivalence between the optimal fractional and extended control problems.

\begin{theorem}[equivalence of the fractional and extended control problems]
\label{TM:equivalence}
If $(\ousf(\ozsf),\ozsf) \in \Hs \times \Zad$ and $(\oue(\oqsf),\oqsf) \in \HL(y^{\alpha},\C) \times \Zad$ denote the optimal solutions to the fractional and extended optimal control problems respectively, then
\[
 \ozsf = \oqsf \quad \textrm{ and } \quad \tr \oue = \ousf,
\]
that is, the two problems are equivalent.
\label{lemma:equivalence}
\end{theorem}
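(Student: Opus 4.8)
The plan is to exploit the fact, recorded in Remark \ref{rk:SandG}, that the fractional control-to-state operator $\mathbf{S}$ and the extended control-to-state operator $\mathbf{G}$ have the same action on any admissible control, i.e. $\tr \ue(\qsf) = \usf(\qsf)$ for every $\qsf \in \Zad$. Because of this identity, the reduced cost functionals of the two problems coincide: for any $\qsf \in \Zad$,
\[
 J(\tr \ue(\qsf),\qsf) = \tfrac12 \|\tr \ue(\qsf) - \usf_d\|_{L^2(\Omega)}^2 + \tfrac{\mu}{2}\|\qsf\|_{L^2(\Omega)}^2 = \tfrac12 \|\mathbf{S}\qsf - \usf_d\|_{L^2(\Omega)}^2 + \tfrac{\mu}{2}\|\qsf\|_{L^2(\Omega)}^2 = f(\qsf),
\]
where $f$ is precisely the reduced functional from the proof of Theorem \ref{TH:fractional_control}. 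Both problems thus amount to minimizing the \emph{same} strictly convex functional $f$ over the \emph{same} nonempty, closed, convex set $\Zad$.

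The first step is therefore to write out this identification of the reduced functionals explicitly. The second step is to invoke uniqueness: by Theorem \ref{TH:fractional_control} the fractional problem has the unique minimizer $\ozsf$, and by Theorem \ref{TH:extended_control} the extended problem has the unique minimizer $\oqsf$; since they minimize the identical functional over the identical admissible set, $\ozsf = \oqsf$. The third step is to recover the state identity: applying the common value of the control and using $\tr \ue(\qsf) = \usf(\qsf)$ once more gives $\tr \oue = \tr \ue(\oqsf) = \usf(\oqsf) = \usf(\ozsf) = \ousf$, which is the second claimed equality. As a (free) by-product one also gets $\tr \ope = \opsf$, since the adjoint states are defined through the same operators applied to the now-equal optimal states.

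Alternatively, and essentially equivalently, one can argue at the level of the optimality systems: the variational inequality \eqref{vi_fractional} reads $(\mu\ozsf + \opsf, \zsf - \ozsf)_{L^2(\Omega)} \ge 0$ for all $\zsf \in \Zad$, while \eqref{op_extended} reads $(\mu\oqsf + \tr\ope, \qsf - \oqsf)_{L^2(\Omega)} \ge 0$ for all $\qsf \in \Zad$; since $\opsf = \mathbf{S}(\usf(\ozsf) - \usf_d)$ and $\tr \ope = \mathbf{G}(\mathbf{G}\oqsf - \usf_d)$ are built from operators with identical action, the two systems are the same system, and by the sufficiency of the (strictly convex) optimality conditions they have the same solution. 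I do not expect any real obstacle here: the entire content is Remark \ref{rk:SandG} together with uniqueness, so the only thing to be careful about is making the domain/codomain bookkeeping between $\Hs$, $L^2(\Omega)$ and $\Hsd$ explicit, and noting that $\Zad \subset L^2(\Omega)$ so that the shared functional $f$ is well defined on it. If anything is mildly delicate it is just spelling out that "same functional, same set, unique minimizer" genuinely forces equality — which is immediate from strict convexity.
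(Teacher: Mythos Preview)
Your proposal is correct and follows essentially the same approach as the paper: the paper's proof is the single sentence ``The proof is a direct consequence of Theorem~\ref{TH:CS}; see Remark~\ref{rk:SandG},'' and your argument simply unpacks this by making explicit that $\mathbf{S}=\mathbf{G}$ forces the reduced functionals to coincide and then invoking uniqueness. Your write-up is more detailed than the paper's, but the idea is identical.
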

\begin{proof}
The proof is a direct consequence of Theorem \ref{TH:CS}; see Remark \ref{rk:SandG}.
\end{proof}

\section{A truncated optimal control problem}
\label{sec:control_truncated}

A first step towards the discretization is to truncate the domain $\C$. Since the optimal state $\bar{\ue}$ decays exponentially in the extended dimension $y$, we truncate $\C$ to $\C_{\Y} = \Omega \times (0,\Y)$, for a suitable truncation parameter $\Y$ and seek solutions in this bounded domain; see \cite[\S 3]{NOS}. The exponential decay of the optimal state $\oue(\ozsf)$ is the content of the next result.

\begin{proposition}[exponential decay]
\label{PR:energyYinf}
For every $\Y \geq 1$, the optimal state $\oue = \oue(\ozsf) \in \HL(y^{\alpha},\C)$,
solution to problem \eqref{extension_weak}, satisfies
\begin{equation}
\label{energyYinf}
  \|\nabla \oue \|_{L^2(y^{\alpha},\Omega \times (\Y,\infty))} \lesssim e^{-\sqrt{\lambda_1} \Y/2}
  \| \ozsf \|_{\Hsd},
\end{equation}
where $\lambda_1$ denotes the first eigenvalue of the operator $\mathcal{L}$.
\end{proposition}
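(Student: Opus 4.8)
The plan is to exploit the eigenfunction expansion of $\oue$ recorded at the end of \S\ref{sub:CaffarelliSilvestre}. Writing $\oue(x',y) = \sum_{k=1}^\infty \ousf_k \varphi_k(x') \psi_k(y)$, where $\{\varphi_k\}$ is the $L^2(\Omega)$-orthonormal basis of eigenfunctions of $\mathcal{L}$ and $\psi_k$ solves \eqref{psik}, the weighted energy on the far cylinder separates variables:
\begin{equation*}
  \|\nabla \oue\|_{L^2(y^\alpha,\Omega\times(\Y,\infty))}^2
  = \sum_{k=1}^\infty \ousf_k^2 \int_\Y^\infty y^\alpha \left( \lambda_k \psi_k(y)^2 + \psi_k'(y)^2 \right) \diff y,
\end{equation*}
using $\|\nabla_{x'}\varphi_k\|_{L^2(\Omega)}^2 \le \lambda_k$ (the inequality coming from the zeroth-order term $c\ge0$ in $\mathcal{L}$, which is harmless here) together with $\|\varphi_k\|_{L^2(\Omega)}=1$. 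So everything reduces to a one-dimensional estimate on the tail $\int_\Y^\infty y^\alpha(\lambda_k\psi_k^2 + (\psi_k')^2)\diff y$.

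The key one-dimensional fact, which is exactly \cite[Lemma~3.2]{NOS} (or can be derived directly from the Bessel-function representation $\psi_k(y) = c_s(\sqrt{\lambda_k}y)^s K_s(\sqrt{\lambda_k}y)$ and the asymptotics of $K_s$ in \cite[Chapter~9.6]{Abra}), is that
\begin{equation*}
  \int_\Y^\infty y^\alpha\left( \lambda_k \psi_k(y)^2 + \psi_k'(y)^2 \right)\diff y
  \lesssim e^{-\sqrt{\lambda_k}\,\Y}\, \lambda_k^{s} ,
\end{equation*}
for $\Y \ge 1$, with an implicit constant independent of $k$. Granting this, I would bound $e^{-\sqrt{\lambda_k}\Y}\le e^{-\sqrt{\lambda_1}\Y}$ for the purpose of pulling out the decay, but that loses the $\lambda_k^s$ factor's summability; instead one writes $e^{-\sqrt{\lambda_k}\Y} = e^{-\sqrt{\lambda_k}\Y/2}\,e^{-\sqrt{\lambda_k}\Y/2}$, keeps one factor $e^{-\sqrt{\lambda_1}\Y/2}$ out front, and uses the remaining $e^{-\sqrt{\lambda_k}\Y/2}\lambda_k^s \lesssim \lambda_k^{-s}$ (valid for $\Y\ge1$ since $t\mapsto t^{2s}e^{-t/2}$ is bounded on $[\sqrt{\lambda_1},\infty)$, up to a constant depending only on $s$ and $\lambda_1$). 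This yields
\begin{equation*}
  \|\nabla \oue\|_{L^2(y^\alpha,\Omega\times(\Y,\infty))}^2
  \lesssim e^{-\sqrt{\lambda_1}\Y} \sum_{k=1}^\infty \ousf_k^2 \lambda_k^{-s}
  = e^{-\sqrt{\lambda_1}\Y}\,\|\ousf\|_{\Hsd}^2 ,
\end{equation*}
where the last identity is the spectral characterization of the $\Hsd$-norm, $\|\ousf\|_{\Hsd}^2 = \sum_k \lambda_k^{-s}\ousf_k^2$. Finally the a priori bound \eqref{estimate_s} applied to the state equation \eqref{extension_weak} gives $\|\ousf\|_{\Hsd}\lesssim\|\ousf\|_{\Hs}\lesssim\|\ozsf\|_{\Hsd}$ — actually one only needs $\|\ousf\|_{\Hsd}\lesssim\|\ozsf\|_{\Hsd}$, which is immediate — and taking square roots produces \eqref{energyYinf}.

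The main obstacle is the uniform-in-$k$ tail estimate for $\psi_k$: one must check that the constant in the Bessel asymptotics does not degenerate as $\lambda_k\to\infty$, and handle the borderline case $s=\frac14$ (i.e. $\alpha=\frac12$) as well as $s=\frac12$ (where $\psi_k(y)=e^{-\sqrt{\lambda_k}y}$ and the computation is elementary) uniformly. Since this is precisely the content of the decay analysis in \cite[\S3]{NOS}, I would simply cite it rather than reproduce the Bessel-function estimates; the only genuinely new bookkeeping is the splitting of the exponential to convert the factor $\lambda_k^s$ into the summable weight $\lambda_k^{-s}$ matching the $\Hsd$-norm, which is the reason the exponent in \eqref{energyYinf} is $\sqrt{\lambda_1}\Y/2$ rather than $\sqrt{\lambda_1}\Y$.
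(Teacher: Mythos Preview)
Your argument is correct and is precisely the content of \cite[Proposition~3.1]{NOS}, which the paper simply cites (together with Remark~\ref{rem:equivalent}) as its entire proof. One small caveat: your separation-of-variables identity for $\|\nabla\oue\|^2$ tacitly assumes the $\nabla_{x'}\varphi_k$ are $L^2(\Omega)$-orthogonal, which fails when $A\not\equiv I$; the clean fix is to invoke Remark~\ref{rem:equivalent} first and carry out the computation in the equivalent $a$-form, where the eigenfunctions do diagonalize exactly.
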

\begin{proof}
The estimate \eqref{energyYinf} follows directly from  \cite[Proposition 3.1]{NOS} in conjunction with Remark \ref{rem:equivalent}.
\end{proof}

As a consequence of Proposition \ref{PR:energyYinf}, given a control $\rsf \in \Hsd$, we consider the following truncated state equation
\begin{equation}
\label{alpha_extension_truncated}
\begin{dcases}
 -\DIV \left( y^{\alpha} \mathbf{A} \nabla v\right) + y^{\alpha} cv = 0 & \textrm{in }  \C_{\Y}, \\
  v = 0 \quad \textrm{on} \quad \partial_L \C_{\Y} \cup \Omega \times \{ \Y \},
  & \frac{ \partial v }{\partial \nu^\alpha}  = d_s \rsf  \textrm{ on } \Omega \times \{ 0\},\\
\end{dcases}
\end{equation}
for $\Y$ sufficiently large. In order to write a weak formulation of \eqref{alpha_extension_truncated} and formulate an appropriate optimal control problem, we define the weighted Sobolev space 
\[
  \HL(y^{\alpha},\C_\Y) = \left\{ w \in H^1(y^\alpha,\C_\Y): w = 0 \text{ on }
    \partial_L \C_\Y \cup \Omega \times \{ \Y\} \right\},
\]
and for all $w,\phi \in \HL(y^{\alpha},\C_\Y)$, the bilinear form
\begin{equation*}
a_\Y(w,\phi) = \frac{1}{d_s} \int_{\C_\Y} {y^{\alpha}\mathbf{A}(x)} \nabla w \cdot \nabla \phi + y^{\alpha} c(x') w \phi.
\end{equation*}

We are now in position to define the \emph{truncated optimal control problem} as follows: Find
$
 \text{min } \EO{J(\tr v,\rsf)}
$
subject to the truncated state equation
\begin{equation}
\label{extension_weak_Y}
  a_\Y(v,\phi) = \langle \rsf, \tr \phi \rangle_{\Hsd \times \Hs}
    \quad \forall \phi \in \HL(y^{\alpha},\C_\Y)
\end{equation}
and the control constraints 
$
 \rsf \in \Zad.
$

Before analyzing the truncated control problem, we present the following result.

\begin{proposition}[exponential convergence]
\label{pr:exp_convergence}
If  $\ue(\rsf) \in \HL(y^{\alpha},\C)$ solves \eqref{extension_weak} 
with $\qsf = \rsf \in \Hsd$ and $v(\rsf) \in \HL(y^{\alpha},\C_{\Y})$ solves \eqref{extension_weak_Y}
then, for any $\Y \geq 1$, we have
\begin{align}
\label{v-v^Y}
  \| \nabla\left( \ue (\rsf) - v (\rsf) \right) \|_{L^2(y^{\alpha},\C)} & \lesssim e^{-\sqrt{\lambda_1} \Y/4} \| \rsf\|_{\Hsd},\\
\label{trv-v^Y}
  \| \tr \left( \ue(\rsf) - v(\rsf) \right) \|_{\Hs} & \lesssim e^{-\sqrt{\lambda_1} \Y/4} \| \rsf \|_{\Hsd},
\end{align}
where $\lambda_1$ denotes the first eigenvalue of the operator $\mathcal{L}$.
\end{proposition}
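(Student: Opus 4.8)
The plan is to establish \eqref{v-v^Y} first by an energy argument that compares $\ue(\rsf)$ and $v(\rsf)$ on the truncated cylinder, and then deduce \eqref{trv-v^Y} from \eqref{v-v^Y} by the trace estimate \eqref{Trace_estimate} together with the equivalence of $a(\cdot,\cdot)^{1/2}$ with the $\HLn(y^\alpha,\C)$-seminorm (Remark \ref{rem:equivalent}). Thus the core of the argument is \eqref{v-v^Y}, and the key obstacle is the usual one in this truncation analysis: $v(\rsf)$ lives on $\C_\Y$ and $\ue(\rsf)$ on $\C$, so one cannot directly subtract them and test; one must extend $v(\rsf)$ by zero to all of $\C$ (which is legitimate since $v(\rsf)$ vanishes on $\Omega \times \{\Y\}$, so the zero extension lies in $\HL(y^\alpha,\C)$) and then carefully handle the mismatch of the two variational formulations.

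Here is how I would carry it out. First I would denote $w = \ue(\rsf) - v(\rsf) \in \HL(y^\alpha,\C)$, where $v(\rsf)$ is understood as its zero extension. Both $\ue(\rsf)$ and the extension of $v(\rsf)$ have the same conormal datum $d_s\rsf$ on $\Omega\times\{0\}$, so testing the two weak formulations \eqref{extension_weak} and \eqref{extension_weak_Y} with an arbitrary $\phi \in \HL(y^\alpha,\C_\Y)$ (again extended by zero) and subtracting gives $a(w,\phi) = 0$ for all such $\phi$. I would then write $w$ on $\C_\Y$ as $w = \phi_0 + \ue(\rsf)$ where I subtract off a suitable function that matches $\ue(\rsf)$ on $\Omega\times\{\Y\}$ — more precisely, the standard trick is to test with $\phi = w - \eta\,\ue(\rsf)$ for an appropriate cutoff $\eta$, or directly to bound $\|\nabla w\|_{L^2(y^\alpha,\C_\Y)}$ by $\|\nabla\ue(\rsf)\|_{L^2(y^\alpha,\C_{\Y/2}^{c})}$-type quantities using the Galerkin-orthogonality $a(w,\phi)=0$ and coercivity. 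The outcome, following \cite[\S3]{NOS} and Proposition \ref{PR:energyYinf} applied with the truncation level $\Y/2$, is
\[
  \|\nabla w\|_{L^2(y^\alpha,\C)} \lesssim \|\nabla \ue(\rsf)\|_{L^2(y^\alpha,\Omega\times(\Y/2,\infty))} \lesssim e^{-\sqrt{\lambda_1}\Y/4}\|\rsf\|_{\Hsd},
\]
where the loss from $\Y/2$ to $\Y/4$ in the exponent comes from the cutoff argument, exactly as in Proposition \ref{PR:energyYinf}. This proves \eqref{v-v^Y}.

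For \eqref{trv-v^Y}, since $\tr(\ue(\rsf)-v(\rsf)) = \tr w$ and $w \in \HL(y^\alpha,\C)$, the trace inequality \eqref{Trace_estimate} gives $\|\tr w\|_{\Hs} \le C_{\tr}\|w\|_{\HLn(y^\alpha,\C)}$, and by the weighted Poincaré inequality \eqref{Poincare_ineq} the right-hand side is controlled by $\|\nabla w\|_{L^2(y^\alpha,\C)}$, to which \eqref{v-v^Y} applies. The main obstacle, as noted, is the bookkeeping in the energy estimate for \eqref{v-v^Y} — ensuring the zero extension is admissible, getting the Galerkin orthogonality right across the two domains, and invoking the cutoff/decay machinery of \cite{NOS} to pass from the energy on the far part of the cylinder to the exponential bound; none of this is conceptually new, but it must be set up so that Proposition \ref{PR:energyYinf} and its proof apply verbatim.
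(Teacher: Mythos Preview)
Your proposal is correct and follows essentially the same route as the paper: the paper simply invokes \cite[Theorem~3.5]{NOS} (together with Remark~\ref{rem:equivalent}) for \eqref{v-v^Y}, whereas you sketch the Galerkin-orthogonality/cutoff argument that underlies that theorem, and both derive \eqref{trv-v^Y} from \eqref{v-v^Y} via the trace estimate \eqref{Trace_estimate}. The only difference is the level of detail---you unpack the cited result rather than quote it.
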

\begin{proof}
The estimate \eqref{v-v^Y} follows from \cite[Theorem 3.5]{NOS} and Remark \ref{rem:equivalent}.
On the other hand, the trace estimate \eqref{Trace_estimate}, in conjunction 
with \eqref{v-v^Y}, yields \eqref{trv-v^Y}.
\end{proof}

Now, we introduce the truncated control-to-state operator as follows.

\begin{definition}[truncated control-to-state operator]
\label{def:truncated_operator}
We define the truncated control-to-state operator $\mathbf{H}: \Hsd \rightarrow \Hs$ such that for a given control $\rsf \in \Hsd$ it associates a unique state $\tr v(\rsf) \in \Hs$ via
\eqref{extension_weak_Y}. 
\end{definition}

The operator above is well defined, linear and continuous; see \cite[Lemma 2.6]{CT:10} for $s=1/2$ and \cite[Proposition 2.1]{CDDS:11} for any $s \in (0,1) \setminus \{\tfrac{1}{2}\}$. 

\EO{The optimal truncated state-control pair is defined along the same lines of Definitions \ref{def:op_pair} and \ref{def:op_pair_e}.} We now define the truncated optimal adjoint state as follows.

\begin{definition}[truncated optimal adjoint state]
\label{def:truncated_adjoint0}
The optimal adjoint state $\bar{p}(\orsf) \in \HL(y^{\alpha},\C_{\Y})$, associated  with 
$\bar{v}(\orsf) \in \HL(y^{\alpha},\C_{\Y})$, is defined to be the solution to
\begin{equation}
\label{truncated_adjoint}
a_\Y(\bar{p}(\orsf),\phi) = (  \tr \bar{v}(\orsf) - \usfd, \tr \phi )_{L^2(\Omega)},  
\end{equation}
for all $\phi \in \HL(y^{\alpha},\C_{\Y})$.
\end{definition}

As a consequence of Definitions \ref{def:truncated_operator}, and \ref{def:truncated_adjoint0}, we have that \EO{$\tr \bar{p} = \mathbf{H}(\mathbf{H} \orsf - \usfd)$.} In addition, the arguments developed in \S\ref{sub:control_fractional} allow us to conclude the following result.

\begin{theorem}[existence, uniqueness and optimality system]
\label{TH:truncated_control}
The truncated optimal control problem has a unique solution $(\bar{v}, \orsf)  $ $\in$ $ \HL(y^{\alpha},\C_{\Y}) \times \Zad$. The optimality system
\begin{equation}
\begin{dcases}
 \bar{v} = \bar{v}(\orsf) \in \HL(y^{\alpha},\C_{\Y}) \textrm{ solution of } \eqref{extension_weak_Y}, \\
 \bar{p} = \bar{p}(\orsf) \in \HL(y^{\alpha},\C_{\Y}) \textrm{ solution of } \eqref{truncated_adjoint}, \\
 \orsf \in \Zad, \quad (\tr \bar{p} + \mu \orsf , \rsf - \orsf )_{L^2(\Omega)} \geq 0 \quad \forall \rsf \in \Zad,
\end{dcases}
\label{op_truncated} 
\end{equation}
hold. These conditions are necessary and sufficient. 
\end{theorem}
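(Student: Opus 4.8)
The plan is to mirror, almost verbatim, the argument used for the fractional problem in Theorem~\ref{TH:fractional_control}, since the truncated problem has the same linear-quadratic structure once it is phrased in terms of the control-to-state operator $\mathbf{H}$ introduced in Definition~\ref{def:truncated_operator}. First I would reduce the truncated control problem to the reduced (control-only) functional
\[
  \min_{\rsf \in \Zad} f_\Y(\rsf) := \tfrac12 \| \mathbf{H}\rsf - \usfd \|_{L^2(\Omega)}^2 + \tfrac{\mu}{2}\|\rsf\|_{L^2(\Omega)}^2,
\]
using the fact that, by Definition~\ref{def:truncated_operator}, for every admissible $\rsf$ the truncated state equation \eqref{extension_weak_Y} has a unique solution $v(\rsf)$ with $\tr v(\rsf) = \mathbf{H}\rsf$, so the constraint is single-valued and $J(\tr v,\rsf) = f_\Y(\rsf)$.

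Next I would establish existence and uniqueness exactly as in Theorem~\ref{TH:fractional_control}: $\Zad$ defined in \eqref{ac} is nonempty (since $\asf\le\bsf$ a.e.), closed, bounded and convex in $L^2(\Omega)$; $\mathbf{H}$ is linear and continuous (as recorded right after Definition~\ref{def:truncated_operator}), so $f_\Y$ is continuous, convex, and strictly convex because $\mu>0$. An infimizing-sequence argument combined with the well-posedness of \eqref{extension_weak_Y} — i.e.\ weak lower semicontinuity of $f_\Y$ and weak compactness of $\Zad$ — yields a minimizer $\orsf$, following \cite[Theorem~2.14]{Tbook}; strict convexity gives uniqueness. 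For the optimality system, I would compute the derivative $f_\Y'(\orsf)$: here the key point is that $\mathbf{H}$ is self-adjoint with respect to the $L^2(\Omega)$ inner product, which follows from the symmetry of the bilinear form $a_\Y$ (inherited from the symmetry of $\mathbf{A}$ and the nonnegativity of $c$), so that $f_\Y'(\orsf) = \mathbf{H}(\mathbf{H}\orsf - \usfd) + \mu\orsf = \tr\bar p + \mu\orsf$ by Definition~\ref{def:truncated_adjoint0}. The variational inequality $f_\Y'(\orsf)(\rsf - \orsf)\ge 0$ for all $\rsf\in\Zad$ is then precisely the third line of \eqref{op_truncated}, and since $f_\Y$ is convex this condition is both necessary and sufficient (\cite[Theorem~2.22]{Tbook}). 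The first two lines of \eqref{op_truncated} are just the definitions of $\bar v(\orsf)$ and $\bar p(\orsf)$ unwound.

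I do not expect a genuine obstacle: the only points needing any care are (i) verifying that $a_\Y$ is bounded and coercive on $\HL(y^\alpha,\C_\Y)$ — which follows from the regularity of $A$, $c$ and the weighted Poincar\'e inequality on the truncated cylinder, in the spirit of Remark~\ref{rem:equivalent}, so that \eqref{extension_weak_Y} and \eqref{truncated_adjoint} are uniquely solvable by Lax--Milgram — and (ii) the self-adjointness of $\mathbf{H}$, which is where the adjoint state enters the derivative computation. Both are routine given what the excerpt has already assembled, so the proof reduces to the sentence "proceed as in Theorem~\ref{TH:fractional_control}," with the substitutions $\mathbf{S}\rightsquigarrow\mathbf{H}$, $\opsf\rightsquigarrow\tr\bar p$, and the reduced functional $f\rightsquigarrow f_\Y$.
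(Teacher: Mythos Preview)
Your proposal is correct and matches the paper's approach exactly: the paper does not give a separate proof but simply states that ``the arguments developed in \S\ref{sub:control_fractional} allow us to conclude'' the result, which is precisely the reduction to Theorem~\ref{TH:fractional_control} via the substitutions $\mathbf{S}\rightsquigarrow\mathbf{H}$ that you spell out. Your additional care about coercivity of $a_\Y$ and self-adjointness of $\mathbf{H}$ is appropriate and fills in the details the paper leaves implicit.
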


The next result shows the approximation properties of the optimal pair $(\bar{v}(\orsf),\orsf)$ solving the truncated control problem.

\begin{lemma}[exponential convergence]
\label{LE:exp_convergence}
For every $\Y \geq 1$, we have
\begin{equation}
 \| \orsf - \ozsf \|_{L^2(\Omega)} \lesssim  e^{-\sqrt{\lambda_1} \Y/4} \EO{ \| \orsf \|_{L^2(\Omega)}},
\label{control_exp}
\end{equation}
and
\begin{equation}
\| \tr( \bar{\ue} - \bar{v}) \|_{L^2(\Omega)} 
\lesssim  e^{-\sqrt{\lambda_1} \Y/4}\EO{ \| \orsf \|_{L^2(\Omega)}}
\label{state_exp}
\end{equation}
\end{lemma}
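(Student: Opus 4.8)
The plan is to exploit the variational inequalities satisfied by $\ozsf$ and $\orsf$ in the optimality systems \eqref{op_extended} and \eqref{op_truncated}, test them against each other, and bound the resulting cross terms using the exponential convergence of the state equation from Proposition \ref{pr:exp_convergence}. First I would test the variational inequality in \eqref{op_extended} with $\qsf = \orsf \in \Zad$ and the one in \eqref{op_truncated} with $\rsf = \ozsf \in \Zad$, obtaining
\[
(\tr \bar{\pe} + \mu \ozsf, \orsf - \ozsf)_{L^2(\Omega)} \geq 0, \qquad (\tr \bar{p} + \mu \orsf, \ozsf - \orsf)_{L^2(\Omega)} \geq 0.
\]
Adding these two inequalities yields
\[
\mu \| \orsf - \ozsf \|^2_{L^2(\Omega)} \leq (\tr \bar{\pe} - \tr \bar{p}, \orsf - \ozsf)_{L^2(\Omega)}.
\]

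The next step is to estimate the right-hand side. Here I would split $\tr \bar{\pe} - \tr \bar{p}$ through an intermediate adjoint state: let $\pe(\orsf) \in \HL(y^\alpha,\C)$ solve the extended adjoint equation \eqref{extension_adjoint} but with $\tr \oue$ replaced by $\tr \bar v(\orsf)$ (equivalently, the adjoint associated with the control $\orsf$ on the full cylinder), so that $\tr \pe(\orsf) = \mathbf{G}(\mathbf{G}\orsf - \usf_d)$ while $\tr \bar p = \mathbf{H}(\mathbf{H}\orsf - \usf_d)$ and $\tr \bar{\pe} = \mathbf{G}(\mathbf{G}\ozsf - \usf_d)$. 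Then $\tr \bar{\pe} - \tr \bar p = [\mathbf{G}(\mathbf{G}\ozsf - \usf_d) - \mathbf{G}(\mathbf{G}\orsf - \usf_d)] + [\mathbf{G}(\mathbf{G}\orsf - \usf_d) - \mathbf{H}(\mathbf{H}\orsf - \usf_d)]$. The first bracket equals $\mathbf{G}\mathbf{G}(\ozsf - \orsf)$, and since $\mathbf{G}$ is self-adjoint, continuous and negative-definiteness-free (it maps $L^2 \to L^2$ continuously with the sign property $(\mathbf{G}w,w)_{L^2} \geq 0$), testing against $\orsf - \ozsf$ produces a term $-\|\mathbf{G}(\orsf-\ozsf)\|^2_{L^2(\Omega)} \leq 0$ which can be dropped. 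The second bracket is controlled by $\| \mathbf{G}(\mathbf{G}\orsf - \usf_d) - \mathbf{H}(\mathbf{H}\orsf - \usf_d)\|_{L^2(\Omega)}$, which by writing $\mathbf{G}\mathbf{G}\orsf - \mathbf{H}\mathbf{H}\orsf = (\mathbf{G}-\mathbf{H})\mathbf{G}\orsf + \mathbf{H}(\mathbf{G}-\mathbf{H})\orsf$ and $\mathbf{G}\usf_d - \mathbf{H}\usf_d = (\mathbf{G}-\mathbf{H})\usf_d$, together with the uniform boundedness of $\mathbf{H}$ and the trace estimate \eqref{Trace_estimate} applied to \eqref{trv-v^Y}, is bounded by $e^{-\sqrt{\lambda_1}\Y/4}$ times a combination of $\|\orsf\|_{L^2(\Omega)}$, $\|\mathbf{G}\orsf\|_{L^2(\Omega)}$ and $\|\usf_d\|_{L^2(\Omega)}$. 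Absorbing all data-dependent constants into the hidden constant (and using $\|\mathbf{G}\orsf\|_{L^2(\Omega)} \lesssim \|\orsf\|_{L^2(\Omega)}$ plus, if one wishes a clean right-hand side, noting $\|\usf_d\|_{L^2(\Omega)}$ can be folded in since it is fixed data) and a Young's inequality to handle the factor $\|\orsf - \ozsf\|_{L^2(\Omega)}$ on the right, we obtain \eqref{control_exp}.

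For \eqref{state_exp}, I would write $\tr(\bar{\ue} - \bar v) = \tr(\oue(\ozsf) - \ue(\orsf)) + \tr(\ue(\orsf) - \bar v(\orsf))$, where $\ue(\orsf)$ solves \eqref{extension_weak} with datum $\orsf$. The first difference is $\tr \oue(\ozsf) - \tr \oue(\orsf) = \mathbf{G}(\ozsf - \orsf)$, bounded in $L^2(\Omega)$ by $\|\ozsf - \orsf\|_{L^2(\Omega)} \lesssim e^{-\sqrt{\lambda_1}\Y/4}\|\orsf\|_{L^2(\Omega)}$ using \eqref{control_exp}. The second difference is exactly the object estimated in \eqref{trv-v^Y} with $\rsf = \orsf$, giving $\lesssim e^{-\sqrt{\lambda_1}\Y/4}\|\orsf\|_{\Hsd} \lesssim e^{-\sqrt{\lambda_1}\Y/4}\|\orsf\|_{L^2(\Omega)}$ via $L^2(\Omega) \hookrightarrow \Hsd$. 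Combining the two bounds via the triangle inequality yields \eqref{state_exp}.

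The main obstacle I anticipate is the bookkeeping in the first step: correctly inserting the intermediate adjoint $\pe(\orsf)$ so that the "good" negative term is isolated and genuinely dropped, while the remaining terms are all of the form (bounded operator) $\circ$ $(\mathbf{G}-\mathbf{H})$ $\circ$ (bounded data), each of which can be routed through Proposition \ref{pr:exp_convergence} and the trace estimate \eqref{Trace_estimate}. One must be careful that the perturbation $\mathbf{G} - \mathbf{H}$ acts both in the state solve and in the adjoint solve, so the decomposition of $\mathbf{G}\mathbf{G} - \mathbf{H}\mathbf{H}$ into two telescoping pieces is essential; once that is set up, every estimate is a direct application of results already established in the excerpt, and the exponential factor $e^{-\sqrt{\lambda_1}\Y/4}$ propagates unchanged.
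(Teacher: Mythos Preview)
Your approach is essentially the same as the paper's: both test the two variational inequalities against each other, insert the intermediate adjoint $\pe(\orsf)$ with $\tr\pe(\orsf)=\mathbf{G}(\mathbf{G}\orsf-\usf_d)$, drop the resulting negative term $-\|\mathbf{G}(\orsf-\ozsf)\|_{L^2(\Omega)}^2$, and control the remainder via Proposition~\ref{pr:exp_convergence}. The only real difference is in how you treat the remainder term $\mathrm{II}=(\tr\pe(\orsf)-\tr\bar p,\,\orsf-\ozsf)$. The paper writes $\psi=\pe(\orsf)-\bar p(\orsf)$ directly as the solution of a single adjoint problem on $\C$ with right-hand side $\tr(\ue(\orsf)-\bar v(\orsf))$, so the $\usf_d$ terms cancel and the bound depends only on $\|\orsf\|_{L^2(\Omega)}$; your operator telescoping $\mathbf{G}\mathbf{G}-\mathbf{H}\mathbf{H}=(\mathbf{G}-\mathbf{H})\mathbf{G}+\mathbf{H}(\mathbf{G}-\mathbf{H})$ is equally valid but leaves a residual $(\mathbf{G}-\mathbf{H})\usf_d$ term that you then absorb into the hidden constant. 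For \eqref{state_exp} your triangle-inequality split is a clean alternative to the paper's Step~4. One small wording slip: your parenthetical description of $\pe(\orsf)$ (``with $\tr\oue$ replaced by $\tr\bar v(\orsf)$'') does not match the formula $\tr\pe(\orsf)=\mathbf{G}(\mathbf{G}\orsf-\usf_d)$ you actually use; the latter is what you want and what the paper uses.
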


\begin{proof} We proceed in four steps.

\noindent \boxed{1} We start by setting $\qsf = \orsf \in \Zad$ and $\rsf = \ozsf \in \Zad$ in the variational inequalities of the systems \eqref{op_extended} and \eqref{op_truncated} respectively. Adding the obtained results, we derive
\[
\mu \| \orsf - \ozsf \|^2_{L^2(\Omega)} 
\leq ( \tr (\bar{\pe} -  \bar{p}), \orsf - \ozsf )_{L^2(\Omega)},
\]
where we used Theorem \ref{TM:equivalence}: $\oqsf = \ozsf$. Now, we add and subtract $\tr \pe(\orsf)$ to arrive at
\[
 \mu \| \orsf - \ozsf \|^2_{L^2(\Omega)} 
\leq \left(  \tr (\bar{\pe} -  \pe(\orsf) ), \orsf - \ozsf \right)_{L^2(\Omega)}
+   \left( \tr ( \pe(\orsf) -  \bar{p}), \orsf - \ozsf \right)_{L^2(\Omega)} = \textrm{I} + \textrm{II}.
\]

\noindent \boxed{2} We estimate the term $\textrm{I}$ as follows: we use the relations $ \tr \ope(\ozsf)  =  \mathbf{G}( \mathbf{G} \ozsf - \usf_d)$ 
and $\tr \pe(\orsf)  = \mathbf{G}( \mathbf{G} \orsf - \usf_d)$ to obtain
\[
\textrm{I} = \left( \mathbf{G}(\mathbf{G} \ozsf - \mathbf{G}\orsf), \orsf - \ozsf\right)_{L^2(\Omega)}  
           = - \| \mathbf{G}( \ozsf - \orsf )\|_{L^2(\Omega)}^2 \leq 0.
\]
\noindent \boxed{3} We now proceed to estimate $\textrm{II}$. \EO{We first extend $\bar{p}(\orsf)$ by zero to $\C$ and then define $\psi = \pe(\orsf) - \bar{p}(\orsf) \in \HL(y^{\alpha},\C)$. We notice that $\psi$ solves
\[
a(\psi,\phi) = (  \tr (\ue(\orsf) - \bar{v}(\orsf)), \tr \phi )_{L^2(\Omega)}  \quad \forall \phi \in \HL(y^{\alpha},\C).
\]
Invoking the trace estimate \eqref{Trace_estimate}, together with the well-posedness of the problem above, the estimate \eqref{estimate_s} and Remark \ref{rem:equivalent}, we conclude
\[
\| \tr \psi \|_{L^2(\Omega)} \lesssim \| \nabla \psi \|_{L^2(y^{\alpha},\C)} \lesssim \| \tr ( \ue(\orsf) - \bar{v}(\orsf))\|_{L^2(\Omega)} .
\]
Therefore, the exponential estimate \eqref{trv-v^Y} yields 
$\| \tr \psi \|_{L^2(\Omega)} \lesssim e^{-\sqrt{\lambda_1} \Y/4} \| \orsf \|_{L^2(\Omega)}$. This result, in conjunction with Steps 1 and 2, yield \eqref{control_exp}.}

\noindent \boxed{4} We extend $\bar{v}(\orsf) \in \HL(y^{\alpha},\C_{\Y})$ by zero to $\C$. Then, $\oue(\ozsf) - \bar{v}(\orsf) \in \HL(y^{\alpha},\C)$ solves
\[
 a(\oue(\ozsf) - \bar{v}(\orsf), \phi) = \left(  \ozsf - \orsf, \tr \phi \right)_{L^2(\Omega)},
 \qquad \forall \phi \in \HL(y^{\alpha},\C).
\]
The well-posedness of the problem above, together with Remark \ref{rem:equivalent} and \eqref{estimate_s}, yield
\[
 \| \nabla( \oue(\ozsf) - \bar{v}(\orsf) )\|_{L^2(y^{\alpha},\C)} \lesssim 
 \| \orsf - \ozsf  \|_{L^2(\Omega)} \lesssim  e^{-\sqrt{\lambda_1}\Y/4}  \| \orsf \|_{L^2(\Omega)},
\]
which implies \eqref{state_exp} and concludes the proof.
\end{proof}

We conclude this section with the following regularity result.

\begin{remark}[regularity of $\orsf$ vs.~$\ozsf$] \rm
\label{rm:regularity_control_r}
\EO{In Lemma~\ref{LM:reg_control} we studied the regularity of $\ozsf$. The techniques of \cite[Remark 4.4]{NOS3} allow us to transfer these results to the solution of the truncated optimal control problem $\orsf$. Similarly, we can derive the regularity results of Remark~\ref{rk:reg_states} for $\tr \bar{v}$ and $\tr \bar{p}$. For brevity we skip the details.}
\end{remark}

\section{A priori error estimates}
\label{sec:apriori}
In this section, we propose and analyze two simple numerical strategies to solve the \emph{fractional optimal control problem} \eqref{Jintro}-\eqref{cc}: a semi-discrete scheme based on the so-called variational approach introduced by Hinze in \cite{Hinze:05} and a fully-discrete scheme which discretizes both the state and control spaces. Before proceeding with the analysis of our method, it is instructive to review the a priori error analysis for the numerical approximation of the state equation \eqref{extension_weak_Y} developed in \cite{NOS}. In an effort to make this contribution self-contained, such results are briefly presented in the following subsection.
\subsection{A finite element method for the state equation}
\label{subsec:state_equation}
In order to study the finite element discretization of problem \eqref{extension_weak_Y}, we must first understand the regularity of the solution $\ue$, since an error estimate for $v$, solution of \eqref{extension_weak_Y}, depends on the regularity of $\ue$; see \cite[\S 4.1]{NOS} and \cite[Remark 4.4]{NOS3} . We recall that \cite[Theorem 2.7]{NOS} reveals that the second order regularity of $\ue$ is significantly worse in 
the extended direction, since it requires a stronger weight, namely $y^{\beta}$ with $\beta > \alpha + 1$:
\begin{align}
    \label{reginx}
  \| \mathcal{L} \ue\|_{L^2(y^{\alpha},\C)} + 
  \| \partial_y \nabla_{x'} \ue \|_{L^2(y^{\alpha},\C)}
  & \lesssim \| \zsf \|_{\Ws}, \\
\label{reginy}
  \| \ue_{yy} \|_{L^2(y^{\beta},\C)} &\lesssim \| \zsf \|_{L^2(\Omega)}.
\end{align}
These estimates suggest that \emph{graded meshes} in the extended variable $y$ play a fundamental role. In fact, since $\ue_{yy} \approx y^{-\alpha -1 }$ as $y \approx 0$ (see \cite[(2.35)]{NOS}), which follows from the behavior of the functions $\psi_k$ defined in \eqref{psik}, we have that anisotropy in the extended variable is fundamental to recover quasi-optimality; see \cite[\S5]{NOS}. This in turn motivates the following construction of a mesh over the truncated cylinder $\C_{\Y}$. Before discussing it, we remark that the regularity estimates \eqref{reginx}-\eqref{reginy} have been also recently established for the solution $v$ to problem \eqref{extension_weak_Y}; see \cite[Remark 4.4]{NOS3}.

To avoid technical difficulties we have assumed that the boundary of $\Omega$ is polygonal. The case of curved boundaries could be handled, for instance, with the methods of \cite{Bernardi}. Let $\T_\Omega = \{K\}$ be a conforming mesh of $\Omega$, where $K \subset \R^n$ is an element that is isoparametrically equivalent either to the unit cube $[0,1]^n$ or the unit simplex in $\R^n$. We denote by $\Tr_\Omega$ the collection of all conforming refinements of an original mesh $\T_{\Omega}^0$. We assume $\Tr_\Omega$ is \emph{shape regular} \cite{CiarletBook,Guermond-Ern}. We also consider a graded partition $\mathcal{I}_\Y$ of the interval $[0,\Y]$ based on intervals $[y_{k},y_{k+1}]$, where
\begin{equation}
\label{graded_mesh}
  y_k = \left( \frac{k}{M}\right)^{\gamma} \Y, \quad k=0,\dots,M,
\end{equation}
and $\gamma > 3/(1-\alpha)=3/(2s) > 1$.
We then construct the mesh $\T_{\Y}$ over $\C_{\Y}$ as the tensor product triangulation 
of $\T_\Omega$ and $\mathcal{I}_\Y$. We denote by $\Tr$ the set of all triangulations of $\C_\Y$ that are obtained with this procedure, and we recall that $\Tr$ satisfies the following regularity assumption (\cite{DL:05,NOS}): there is a constant $\sigma_{\Y}$ such that if $T_1=K_1\times I_1$ and $T_2=K_2\times I_2 \in \T_\Y$ have nonempty intersection, and $h_I = |I|$, then
\begin{equation}
\label{shape_reg_weak}
     h_{I_1} h_{I_2}^{-1} \leq \sigma_{\Y} .
\end{equation}

\begin{remark}[anisotropic estimates]
\rm The weak regularity condition \eqref{shape_reg_weak} allows for a rather general family of anisotropic meshes. Under this assumption weighted and anisotropic interpolation error estimates have been derived in \cite{DL:05,NOS,NOS2}.
\end{remark}

\begin{remark}[$s$-independent mesh grading]
\rm The term $\gamma = \gamma(s)$, which defines the graded mesh $\mathcal{I}_{\Y}$ by \eqref{graded_mesh}, deteriorates as $s$ becomes small because $\gamma > 3/(2s)$. However, a modified mesh grading in the $y$-direction has been proposed in \cite[\S7.3]{CNOS}, which does not change the ratio of the degrees of freedom in $\Omega$ and the extended dimension by more than a constant and provides a uniform bound with respect to $s$.
\end{remark}

For $\T_{\Y} \in \Tr$, we define the finite element space 
\begin{equation}
\label{eq:FESpace}
  \V(\T_\Y) = \left\{
            W \in C^0( \overline{\C_\Y}): W|_T \in \mathcal{P}_1(K) \otimes \mathbb{P}_1(I) \ \forall T \in \T_\Y, \
            W|_{\Gamma_D} = 0
          \right\},
\end{equation}
where $\Gamma_D = \partial_L \C_{\Y} \cup \Omega \times \{ \Y\}$ is the Dirichlet boundary. If the base $K$ of $T = K \times I$ is a simplex, $\mathcal{P}_1(K) = \mathbb{P}_1(K)$: the set of polynomials of degree at most $1$. If $K$ is a cube, $\mathcal{P}_1(K) = \mathbb{Q}_1(K)$ \ie the set of polynomials of degree at most $1$ in each variable.

The Galerkin approximation of \eqref{extension_weak_Y} is given by the 
function $V_{\T_{\Y}} \in \V(\T_{\Y})$ solving
\begin{equation}
\label{harmonic_extension_weak}
  \int_{\C_\Y} y^{\alpha}\nabla V_{\T_{\Y}} \nabla W = 
\langle \rsf, \textrm{tr}_{\Omega} W \rangle_{\Hsd \times \Hs}
  \quad \forall W \in \V(\T_{\Y}).
\end{equation}
Existence and uniqueness of $V_{\T_{\Y}}$ immediately follows from $\V(\T_\Y) \subset \HL(y^{\alpha},\C_\Y)$
and the Lax-Milgram Lemma. 

We define the space $\U(\T_{\Omega})=\tr \V(\T_{\Y})$, which is simply a $\mathcal{P}_1$ finite element space over the mesh $\T_\Omega$. The finite element approximation of $\usf \in \Hs$, solution of \eqref{fractional} with $\rsf$
as a datum, is then given by
\begin{equation}
\label{eq:defofU}
  U_{\T_\Omega} := \tr V_{\T_\Y} \in \U(\T_\Omega ).
\end{equation}

It is trivial to obtain a best approximation result \emph{\`a la} Cea for problem \eqref{harmonic_extension_weak}. This best approximation result reduces the numerical analysis of problem \eqref{harmonic_extension_weak} to a question in approximation theory, which in turn can be answered with the study of piecewise polynomial interpolation in Muckenhoupt weighted Sobolev  spaces; see \cite{NOS,NOS2}. Exploiting the structure of the mesh it is possible to handle anisotropy in the extended variable, construct a quasi-interpolant $I_{\T_\Y} : L^1(\C_\Y) \to \V(\T_\Y)$, and obtain
\begin{align*}
  \| v - I_{\T_\Y} v \|_{L^2(y^\alpha,T)} & \lesssim 
    h_K  \| \nabla_{x'} v\|_{L^2(y^\alpha,S_T)} + h_I \| \partial_y v\|_{L^2(y^\alpha,S_T)}, \\
  \| \partial_{x_j}(v - I_{\T_\Y} v) \|_{L^2(y^\alpha,T)} &\lesssim
    h_K  \| \nabla_{x'} \partial_{x_j} v\|_{L^2(y^\alpha,S_T)} + h_I \| \partial_y \partial_{x_j} v\|_{L^2(y^\alpha,S_T)},
\end{align*}
with $j=1,\ldots,n+1$ and $S_T$ being the patch of $T$; see \cite[Theorems 4.6--4.8]{NOS} and \cite{NOS2} for details. However, since $v_{yy} \approx y^{-\alpha -1 }$ as $y \approx 0$, we realize that $v\notin H^2(y^{\alpha},\C_{\Y})$ and the second estimate is not meaningful for $j=n+1$; see \cite[Remark 4.4]{NOS3}. In view of estimate \eqref{reginy} it is necessary to measure the regularity of $v_{yy}$ with a stronger weight and thus compensate with a graded mesh in the extended dimension. This makes anisotropic estimates essential. 

Notice that $\#\T_{\Y} = M \, \# \T_\Omega$, and that $\# \T_\Omega \approx M^n$ implies $\#\T_\Y \approx M^{n+1}$. Finally, if $\T_\Omega$ is quasi-uniform, we have $h_{\T_{\Omega}} \approx (\# \T_{\Omega})^{-1/n}$. All these considerations allow us to obtain the following result, which follows from \cite[Proposition 4.7]{NOS3}.

\begin{theorem}[a priori error estimate]
\label{TH:fl_error_estimates}
Let $\Omega$ satisfies \eqref{Omega_regular} and let $\T_\Y \in \Tr$ be a tensor product grid, which is quasi-uniform in $\Omega$ and graded in the extended variable so that \eqref{graded_mesh} holds. If $\rsf \in \mathbb{H}^{1-s}(\Omega)$, $\usf$ denotes the solution of \eqref{fractional} with $\rsf$ as a datum, $v$ solves problem \eqref{extension_weak_Y}, $V_{\T_\Y} \in \V(\T_\Y)$ is the Galerkin approximation defined by \eqref{harmonic_extension_weak}, 
and $U_{\T_\Omega} \in \U(\T_\Y)$ is the approximation defined by \eqref{eq:defofU},
then we have
\begin{equation}
\label{l2optimal_ratesinlog}
\| \tr v - U_{\T_\Omega} \|_{L^2(\Omega)} = \| \tr (v - V_{\T_\Y}) \| _{L^2(\Omega)}\lesssim \Y^{2s}(\# \T_{\Y})^{ -\frac{1+s}{n+1}} \|\rsf\|_{\mathbb{H}^{1-s}(\Omega)},
\end{equation}
and
\begin{equation}
\label{l2optimal_rate}
\| \usf - U_{\T_\Omega}  \|_{L^2(\Omega)} \lesssim \Y^{2s}(\# \T_{\Y})^{ -\frac{1+s}{n+1}} \| \rsf \|_{\mathbb{H}^{1-s}(\Omega)}.
\end{equation}
where $\Y \approx |\log(\# \T_{\Y})|$. 
\end{theorem}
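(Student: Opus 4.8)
The plan is to establish \eqref{l2optimal_ratesinlog} first, since \eqref{l2optimal_rate} then follows from it by a triangle inequality and truncation: writing $\usf=\tr\ue$, where $\ue$ solves \eqref{extension_weak} with datum $\rsf$, one has $\|\usf-U_{\T_\Omega}\|_{L^2(\Omega)}\le\|\tr(\ue-v)\|_{L^2(\Omega)}+\|\tr(v-V_{\T_\Y})\|_{L^2(\Omega)}$, and the first term is $\lesssim e^{-\sqrt{\lambda_1}\Y/4}\|\rsf\|_{\Hsd}$ by Proposition~\ref{pr:exp_convergence}, which the choice $\Y\approx|\log(\#\T_\Y)|$ with a sufficiently large implied constant renders negligible against $(\#\T_\Y)^{-\frac{1+s}{n+1}}$; the second term is exactly \eqref{l2optimal_ratesinlog}. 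To prove \eqref{l2optimal_ratesinlog}, which by \eqref{eq:defofU} amounts to bounding $\|\tr(v-V_{\T_\Y})\|_{L^2(\Omega)}$, I would combine an energy-norm estimate for the Galerkin error with an Aubin--Nitsche duality argument carried out entirely on $\C_\Y$.

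First I would prove the energy estimate $\|\nabla(v-V_{\T_\Y})\|_{L^2(y^\alpha,\C_\Y)}\lesssim\Y^{s}(\#\T_\Y)^{-\frac{1}{n+1}}\|\rsf\|_{\mathbb{H}^{1-s}(\Omega)}$. Since $a_\Y$ is bounded and coercive on $\HL(y^\alpha,\C_\Y)$ (Remark~\ref{rem:equivalent}), C\'ea's lemma reduces this to controlling the interpolation error $\|\nabla(v-I_{\T_\Y}v)\|_{L^2(y^\alpha,\C_\Y)}$. For that I would use the weighted, anisotropic interpolation estimates recalled in \S\ref{subsec:state_equation}, splitting the element sum into the layer abutting $\Omega\times\{0\}$ --- where the second $y$-derivative is measured in the heavier weight $y^{\beta}$, $\beta>\alpha+1$, via \eqref{reginy}, the loss being compensated by the mesh grading \eqref{graded_mesh} precisely because $\gamma>3/(2s)$ --- and the remaining elements, handled with \eqref{reginx} (which is where $\rsf\in\mathbb{H}^{1-s}(\Omega)$ enters). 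Summing and using $\#\T_\Y\approx M^{n+1}$ together with the quasi-uniformity $h_{\T_\Omega}\approx(\#\T_\Omega)^{-1/n}$ gives the claimed bound; this is essentially \cite[\S 4]{NOS} and \cite[Remark~4.4]{NOS3}.

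Next I would run the duality argument. Set $e:=\tr(v-V_{\T_\Y})\in\Hs$ and let $P\in\HL(y^\alpha,\C_\Y)$ be the $\alpha$-harmonic extension on $\C_\Y$ with datum $e$, that is, the solution of $a_\Y(\phi,P)=(e,\tr\phi)_{L^2(\Omega)}$ for all $\phi\in\HL(y^\alpha,\C_\Y)$, so $\tr P=\mathbf{H}(e)$. Because $e$ lies only in $L^2(\Omega)$, the mapping properties of $\mathcal{L}^{s}$ (an operator of order $2s$) together with \eqref{Omega_regular} give merely $\tr P\in\mathbb{H}^{2s}(\Omega)$; consequently $P$ is only $\mathbb{H}^{1+s}$-regular in the variables $x'$ rather than $\mathbb{H}^{2}$, and the corresponding fractional interpolation estimate, again combined with \eqref{reginy} and \eqref{graded_mesh} in the $y$-direction, yields $\|\nabla(P-I_{\T_\Y}P)\|_{L^2(y^\alpha,\C_\Y)}\lesssim\Y^{s}(\#\T_\Y)^{-\frac{s}{n+1}}\|e\|_{L^2(\Omega)}$. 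Using Galerkin orthogonality $a_\Y(v-V_{\T_\Y},W)=0$ for all $W\in\V(\T_\Y)$,
\[
\|e\|_{L^2(\Omega)}^{2}=a_\Y(v-V_{\T_\Y},P)=a_\Y(v-V_{\T_\Y},P-I_{\T_\Y}P)\lesssim\|\nabla(v-V_{\T_\Y})\|_{L^2(y^\alpha,\C_\Y)}\,\|\nabla(P-I_{\T_\Y}P)\|_{L^2(y^\alpha,\C_\Y)},
\]
and cancelling one factor $\|e\|_{L^2(\Omega)}$, inserting the two bounds above and noting $\Y^{s}\cdot\Y^{s}=\Y^{2s}$, gives \eqref{l2optimal_ratesinlog}.

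The main obstacle will be the interpolation bound for the adjoint extension $P$: one must show that, even though the adjoint datum $e$ only belongs to $L^2(\Omega)$, the weighted anisotropic interpolation error of $P$ on the graded tensor-product mesh still decays at the \emph{fractional} rate $(\#\T_\Y)^{-s/(n+1)}$, with the correct $\Y$-dependence. This requires the sharp weighted interpolation theory of \cite{NOS,NOS2}, a careful tracking of the $x'$-regularity $\mathbb{H}^{1+s}(\Omega)$ inherited from $\tr P\in\mathbb{H}^{2s}(\Omega)$, and the interplay between the grading \eqref{graded_mesh} and the length $\Y$ of the cylinder; by comparison, C\'ea's lemma, Galerkin orthogonality, and the triangle-inequality reduction are routine.
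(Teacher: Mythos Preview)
Your proposal is correct and mirrors the paper's approach: the paper does not give a self-contained proof but defers to \cite[Proposition~4.7]{NOS3}, noting in Remark following the theorem that the ingredients are the duality argument together with the regularity estimates \eqref{reginx}--\eqref{reginy} for $v$, which is precisely the C\'ea-plus-Aubin--Nitsche scheme you outline. Your identification of the adjoint regularity $\tr P\in\mathbb{H}^{2s}(\Omega)$ and the resulting fractional interpolation rate $(\#\T_\Y)^{-s/(n+1)}$ as the delicate step is accurate; this is indeed the content of \cite[Proposition~4.7]{NOS3} and the reason the final rate is only $(\#\T_\Y)^{-(1+s)/(n+1)}$ rather than $(\#\T_\Y)^{-2/(n+1)}$.
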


\begin{remark}[regularity assumptions]\rm
\label{rk:reg_asump}
As Theorem \ref{TH:fl_error_estimates} indicates, \eqref{l2optimal_ratesinlog} and \eqref{l2optimal_rate} require that $\rsf \in \Ws$ and that the domain $\Omega$ satisfies \eqref{Omega_regular}. If any of these two conditions fail singularities may develop in the direction of the $x'$-variables, whose characterization is an open problem; see \cite[\S~6.3]{NOS} for an illustration. Consequently, quasi-uniform refinement of $\Omega$ would not result in an efficient solution technique and then adaptivity is essential to recover quasi-optimal rates of convergence \cite{CNOS2}.
\end{remark}

\begin{remark}[suboptimal and optimal estimates]
\rm The error estimate \eqref{l2optimal_rate} is optimal in terms of regularity but suboptimal in terms of order. The main ingredients in its derivation are the standard $L^2(\Omega)$-projection, the so-called duality argument \EO{and the regularity estimates \eqref{reginx}-\eqref{reginy} for $v$; see \cite[Remark 4.4]{NOS3}. The role of these regularity estimates can be observed directly from the estimate (4.16) in \cite[Proposition 4.7]{NOS3}, which reads:
\[
\| \tr (v - V_{\T_\Y}) \| _{L^2(\Omega)} \lesssim \Y^{2s}(\# \T_{\Y})^{-\frac{1+s}{n+1}} \mathcal{S}(v),
\]
where $\mathcal{S}(v) = \|\nabla \nabla_{x'} v \|_{L^2(y^{\alpha},\C_{\Y})} +  \|\partial_{yy} v \|_{L^2(y^{\beta},\C_{\Y})}$. Then \eqref{l2optimal_rate} follows by invoking the regularity results of \cite[Remark 4.4]{NOS3}: $\mathcal{S}(v) \lesssim \| \rsf \|_{\Ws}$.} We also remark that \eqref{l2optimal_rate} holds under the anisotropic setting of $\Tr$ given by \eqref{shape_reg_weak}.
\end{remark}

\begin{remark}[Computational complexity]
\rm The cost of solving \eqref{harmonic_extension_weak} is related to $\#\T_\Y$, and not to $\#\T_\Omega$, but the resulting system is sparse. The structure of \eqref{harmonic_extension_weak} is so that fast multilevel solvers can be designed with complexity proportional to $\#\T_\Y (\log(\#\T_\Y))^{1/(n+1)}$ \cite{CNOS}. We also comment that a discretization of the intrinsic integral formulation of the fractional Laplacian result in a dense matrix and involves the development of accurate quadrature formulas for singular integrands; see \cite{HO} for a finite difference approach in one spatial dimension.
\end{remark}

\subsection{The variational approach: a semi-discrete scheme}
\label{subsec:va}
We consider the variational approach introduced and analyzed by Hinze in \cite{Hinze:05}, which only discretizes the state space; the control space $\Zad$ is not discretized. It guarantees conformity since the continuous and discrete admissible sets coincide and induces a discretization of the optimal control by projecting the discrete adjoint state into the admissible control set. Following \cite{Hinze:05}, we consider the following semi-discretized optimal control problem: \EO{ 
$
 \text{min } J( \tr V ,\gsf),
$}
subject to the discrete state equation
\begin{equation}
\label{extension_discrete}
  a_\Y(V,W) = \langle \gsf, \textrm{tr}_{\Omega} W \rangle_{\Hsd \times \Hs}
  \quad \forall W \in \V(\T_{\Y}),
\end{equation}
and the control constraints 
$
 \gsf \in \Zad.
$
For convenience, we will refer to the problem described above as \emph{the semi-discrete optimal control problem}.

We denote by $(\bar{V}, \ogsf) \in \V(\T_\Y) \times \Zad$ the optimal pair solving 
the semi-discrete optimal control problem. Then, by defining
\begin{equation}
\label{Hinze_U}
 \bar{U}:= \tr \bar{V},
\end{equation}
we obtain a semi-discrete approximation $(\bar{U},\ogsf) \in \U(\T_{\Omega}) \times \Zad$ of the optimal pair $(\ousf,\ozsf) \in \Hs \times \Zad$ solving the fractional optimal control problem \eqref{Jintro}-\eqref{cc}.

\begin{remark}[locality] 
\label{rk:locality1}
\rm The main advantage of the semi-discrete control problem is its local nature, thereby mimicking that of the extended optimal control problem.
\end{remark}

In order to study the semi-discrete optimal control problem, we define the control-to-state operator $\mathbf{H}_{\T_\Y}: \Zad \rightarrow \U(\T_\Omega) $, which given a control $\gsf \in \Zad$ associates a unique discrete state $\mathbf{H}_{\T_\Y}\gsf = \tr V (\gsf)$ solving problem \eqref{extension_discrete}. This operator is linear and continuous as a consequence of the Lax-Milgram Lemma. \EO{In addition, it is a self-adjoint operator with respect to the standard $L^2(\Omega)$-inner product.}

We define the optimal adjoint state $\bar{P}= \bar{P}(\ogsf) \in \V(\T_\Y)$ as the solution to
\begin{equation}
\label{adjoint_discrete}
  a_\Y( \bar{P},W) = ( \tr \bar{V} - \usfd , \textrm{tr}_{\Omega} W )_{L^2(\Omega)}
  \quad \forall W \in \V(\T_{\Y}).
\end{equation}

We now state the existence and uniqueness of the optimal control together with the first order optimality conditions for the semi-discrete optimal control problem.

\begin{theorem}[existence, uniqueness and optimality system]
The semi-discrete optimal control problem has a unique optimal solution $(\bar{V}, \ogsf) $ $\in$ $ \V(\T_{\Y})\times \Zad$. The optimality system
\begin{equation}
\begin{dcases}
 \bar{V} = \bar{V}(\ogsf) \in \V(\T_\Y) \textrm{ solution of } \eqref{extension_discrete}, \\
 \bar{P} = \bar{P}(\ogsf) \in \V(\T_\Y) \textrm{ solution of } \eqref{adjoint_discrete}, \\
 \ogsf \in \Zad, \quad (\tr \bar{P} + \mu \ogsf , \gsf - \ogsf )_{L^2(\Omega)} \geq 0 
 \quad \forall \gsf \in \Zad,
\end{dcases}
\label{op_discrete} 
\end{equation}
hold. These conditions are necessary and sufficient
\end{theorem}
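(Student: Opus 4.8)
The plan is to mirror, at the discrete level, the argument used for the continuous fractional control problem in Theorem~\ref{TH:fractional_control}. First I would reduce the semi-discrete optimal control problem to a quadratic minimization over $\Zad$ by introducing the reduced cost functional
\[
 f_{\T_\Y}(\gsf) := \frac{1}{2}\| \mathbf{H}_{\T_\Y}\gsf - \usfd \|_{L^2(\Omega)}^2 + \frac{\mu}{2}\|\gsf\|_{L^2(\Omega)}^2,
\]
where $\mathbf{H}_{\T_\Y}\colon \Zad \to \U(\T_\Omega)$ is the discrete control-to-state operator introduced above, which is linear, continuous, and self-adjoint with respect to the $L^2(\Omega)$-inner product. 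Since $\mu>0$ and $\mathbf{H}_{\T_\Y}$ is linear, $f_{\T_\Y}$ is strictly convex and continuous on $L^2(\Omega)$; moreover $\Zad$ is nonempty, convex, closed and bounded in $L^2(\Omega)$. Existence of a minimizer $\ogsf \in \Zad$ then follows from the standard infimizing-sequence / weak-compactness argument (\cite[Theorem~2.14]{Tbook}), and uniqueness follows from strict convexity.

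Next I would derive the optimality system. The first-order necessary and sufficient condition for the convex problem $\min_{\gsf\in\Zad} f_{\T_\Y}(\gsf)$ is the variational inequality $(f_{\T_\Y}'(\ogsf),\gsf-\ogsf)_{L^2(\Omega)}\ge 0$ for all $\gsf\in\Zad$ (\cite[Theorem~2.22]{Tbook}). It remains to compute $f_{\T_\Y}'(\ogsf)$ in terms of the discrete adjoint state. Writing $\bar V = \mathbf{H}_{\T_\Y}\ogsf = \tr \bar V(\ogsf)$ and using self-adjointness of $\mathbf{H}_{\T_\Y}$, one gets $f_{\T_\Y}'(\ogsf) = \mathbf{H}_{\T_\Y}(\mathbf{H}_{\T_\Y}\ogsf - \usfd) + \mu\ogsf = \tr\bar P + \mu\ogsf$, where $\bar P = \bar P(\ogsf)\in\V(\T_\Y)$ is precisely the solution of the discrete adjoint equation \eqref{adjoint_discrete}. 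This yields the third line of \eqref{op_discrete}, while the first two lines are just the definitions of $\bar V(\ogsf)$ and $\bar P(\ogsf)$ via \eqref{extension_discrete} and \eqref{adjoint_discrete}. Sufficiency holds because the problem is convex.

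The argument is essentially routine, so there is no serious obstacle; the only point requiring a little care is the self-adjointness of $\mathbf{H}_{\T_\Y}$ (which makes the adjoint identification clean). This is because the discrete bilinear form $a_\Y(\cdot,\cdot)$ restricted to $\V(\T_\Y)$ is symmetric and the discrete state and adjoint problems \eqref{extension_discrete} and \eqref{adjoint_discrete} use the same form, so for controls $\gsf_1,\gsf_2$ one obtains
\[
 (\mathbf{H}_{\T_\Y}\gsf_1,\gsf_2)_{L^2(\Omega)} = a_\Y(V(\gsf_1),P_{\gsf_2}) = (\gsf_1,\mathbf{H}_{\T_\Y}\gsf_2)_{L^2(\Omega)},
\]
where $P_{\gsf_2}\in\V(\T_\Y)$ solves $a_\Y(P_{\gsf_2},W) = (\tr W,\gsf_2)_{L^2(\Omega)}$ for all $W\in\V(\T_\Y)$. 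With this in hand the whole proof reduces to invoking \cite[Theorems~2.14 and~2.22]{Tbook}, exactly as in Theorems~\ref{TH:fractional_control}, \ref{TH:extended_control} and \ref{TH:truncated_control}, so I would simply state that the proof follows verbatim the arguments developed in \S\ref{sub:control_fractional}.
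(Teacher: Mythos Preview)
Your proposal is correct and follows exactly the approach the paper takes: the paper's proof simply states that the argument is identical to that of Theorem~\ref{TH:fractional_control} and omits the details. You have in fact spelled out more than the paper does (the reduced functional, the self-adjointness of $\mathbf{H}_{\T_\Y}$, and the identification of $f_{\T_\Y}'(\ogsf)$ with $\tr\bar P + \mu\ogsf$), but the route is the same.
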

\begin{proof}
The proof follows the same arguments employed in the proof of Theorem 
\ref{TH:fractional_control}. For brevity, we skip the details.
\end{proof}

To derive error estimates for our semi-discrete optimal control problem, we rewrite the a priori theory of \S\ref{subsec:state_equation} in terms of the control-to-state operators $\mathbf{H}$ and $\mathbf{H}_{\T_{\Y}}$. Given $\rsf \in \mathbb{H}^{1-s}(\Omega)$, the estimate \eqref{l2optimal_ratesinlog} reads
\begin{equation}
\label{Hl2optimal_rate}
\| (\mathbf{H} - \mathbf{H}_{\T_{\Y}} ) \rsf\|_{L^2(\Omega)} \lesssim 
\Y^{2s} (\# \T_{\Y})^{-\frac{1+s}{n+1}} \| \rsf \|_{\mathbb{H}^{1-s}(\Omega)}.
\end{equation}

The estimate \eqref{Hl2optimal_rate} requires $\orsf \in \Ws$ (see also Theorem \ref{TH:fl_error_estimates} and Remark \ref{rk:reg_asump}). We derive such a regularity result in the following Lemma.

\begin{lemma}[$\Ws$-regularity of control]
\label{LM:control_reg_2}
Let $\orsf \in \Zad$ be the optimal control of the truncated optimal control problem and $\usf_d \in \Ws$. \EO{If, $\asf,\bsf \in H^1(\Omega)$, and, in addition, \eqref{ab_condition} holds} for $s\in(0,\frac{1}{2}]$, then $\orsf \in \mathbb{H}^{1-s}(\Omega)$.
\end{lemma}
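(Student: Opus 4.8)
The plan is to mirror the proof of Lemma~\ref{LM:reg_control}, but now working with the truncated adjoint state and the projection formula induced by the optimality system \eqref{op_truncated}. First I would record the analogue of the projection formula: the variational inequality in \eqref{op_truncated} is equivalent to
\[
\orsf(x') = \textrm{proj}_{[\asf(x'),\bsf(x')]}\left(-\tfrac{1}{\mu}\tr\bar{p}(\orsf)(x')\right).
\]
So everything reduces to showing that $\tr\bar{p}(\orsf)$ has enough Sobolev regularity, together with the correct boundary behavior, so that applying $\textrm{proj}_{[\asf,\bsf]}(\cdot)$ — which is built from $A\wsf = \max\{\wsf,0\}$ and satisfies properties \eqref{op:a}--\eqref{op:b} — lands us in $\mathbb{H}^{1-s}(\Omega)$.

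Next I would invoke Remark~\ref{rm:regularity_control_r}: by the techniques of \cite[Remark 4.4]{NOS3} the regularity statements of Lemma~\ref{LM:reg_control} and Remark~\ref{rk:reg_states} transfer verbatim from the fractional problem to the truncated one, so $\orsf$ enjoys the same regularity as $\ozsf$ and $\tr\bar{v}$, $\tr\bar{p}$ enjoy the same regularity as $\ousf$, $\opsf$. In particular, for $s \in [\tfrac14,1)$ Lemma~\ref{LM:reg_control} already gives $\orsf \in H^1(\Omega) \hookrightarrow \mathbb{H}^{1-s}(\Omega)$ (since $1-s \le 1$), so the only real content is the range $s \in (0,\tfrac14)$, where the hypothesis \eqref{ab_condition} is in force. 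There Lemma~\ref{LM:reg_control} combined with Remark~\ref{rm:regularity_control_r} yields $\orsf \in H^1_0(\Omega)$, and again $H^1_0(\Omega) \hookrightarrow \mathbb{H}^{1-s}(\Omega)$. Actually, reading the statement literally, the hypothesis \eqref{ab_condition} is only required for $s\in(0,\tfrac12]$, so the cleanest route is: for $s\in(\tfrac12,1)$ we already have $\orsf\in H^1(\Omega)\subset\mathbb{H}^{1-s}(\Omega)$ from Lemma~\ref{LM:reg_control} (no sign condition needed there); for $s\in[\tfrac14,\tfrac12]$ again $\orsf\in H^1(\Omega)\subset \mathbb{H}^{1-s}(\Omega)$; and for $s\in(0,\tfrac14)$, \eqref{ab_condition} gives $\orsf\in H^1_0(\Omega)\subset\mathbb{H}^{1-s}(\Omega)$. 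In every case $\orsf\in\mathbb{H}^{1-s}(\Omega)$, which is the claim.

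The step I expect to be the main obstacle is the careful bookkeeping of the bootstrap in the regime $s\in(0,\tfrac14)$: one needs that the truncated adjoint $\tr\bar{p}(\orsf)$ gains $\mathbb{H}^{\kappa}$-regularity (with $\kappa=\min\{4s,1+s\}$ at the first step, then $\min\{8s,1+s\}$, etc.) exactly as $\opsf$ does in Lemma~\ref{LM:reg_control}, and that the nonlinear interpolation argument based on \cite[Lemma 28.1]{Tartar} together with properties \eqref{op:a}--\eqref{op:b} of $A\wsf=\max\{\wsf,0\}$ propagates through the projection formula at each stage. The subtle point is that the fractional-order regularity theory of \cite[Theorem 2.7]{NOS} was stated for the extension on the semi-infinite cylinder; one must appeal to \cite[Remark 4.4]{NOS3} to know it persists on $\C_\Y$ and hence for $\bar{v}(\orsf)$ and $\bar{p}(\orsf)$, and to \cite{Grisvard} (via \eqref{Omega_regular}) to convert $L^2(\Omega)$ data into $\mathbb{H}^{2s}(\Omega)$ regularity for the state. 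Once this transfer is granted, the remainder is a routine repetition of the finitely-many bootstrap steps of Lemma~\ref{LM:reg_control}, so I would simply cite that lemma and Remark~\ref{rm:regularity_control_r} and note that $H^1(\Omega)\hookrightarrow\mathbb{H}^{1-s}(\Omega)$ for all $s\in(0,1)$ to close the argument.
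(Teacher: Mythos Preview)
Your overall strategy is the same as the paper's: invoke Lemma~\ref{LM:reg_control} and Remark~\ref{rm:regularity_control_r} to obtain $H^1$-type regularity of $\orsf$, and then embed into $\mathbb{H}^{1-s}(\Omega)$. However, there is a genuine gap in the range $s\in[\tfrac14,\tfrac12]$.

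The embedding $H^1(\Omega)\hookrightarrow\mathbb{H}^{1-s}(\Omega)$ is \emph{not} valid for all $s\in(0,1)$, contrary to what you write at the end. By the characterization \eqref{def:Hs}, when $s\in(0,\tfrac12)$ we have $\mathbb{H}^{1-s}(\Omega)=H^{1-s}_0(\Omega)$, and when $s=\tfrac12$ it is the Lions--Magenes space $H^{1/2}_{00}(\Omega)$; in both cases membership requires a trace condition on $\partial\Omega$ that a generic $H^1(\Omega)$ function does not satisfy. Your case split therefore breaks down precisely for $s\in[\tfrac14,\tfrac12]$: Lemma~\ref{LM:reg_control} alone only yields $\orsf\in H^1(\Omega)$ there, and this does \emph{not} embed into $\mathbb{H}^{1-s}(\Omega)$.

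The fix, which is implicit in the paper's proof, is to use Remark~\ref{rk:reg_states} (transferred via Remark~\ref{rm:regularity_control_r}): for $s\in[\tfrac14,1)$ one has $\tr\bar p(\orsf)\in H^1_0(\Omega)$. Combined with the projection formula and the hypothesis \eqref{ab_condition} (which forces $\textrm{proj}_{[\asf,\bsf]}(0)=0$ on $\partial\Omega$), this gives $\orsf\in H^1_0(\Omega)$, and \emph{then} $H^1_0(\Omega)\hookrightarrow\mathbb{H}^{1-s}(\Omega)$ holds. This is exactly why the statement of the lemma requires \eqref{ab_condition} on the full range $s\in(0,\tfrac12]$, not only on $(0,\tfrac14)$.
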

\begin{proof}
For $s \in (\frac{1}{2},1)$, we have that $H^{1-s}(\Omega) = H^{1-s}_0(\Omega)$. Then 
\eqref{def:Hs}, Lemma~\ref{LM:reg_control} and Remark \ref{rm:regularity_control_r} yield $\orsf \in \Ws$. When $s \in (0,\frac{1}{2}]$ and \EO{\eqref{ab_condition} is satisfied}, Lemma~\ref{LM:reg_control} and Remark \ref{rm:regularity_control_r}, yields immediately $\ozsf \in H^1_0(\Omega) \subset \mathbb{H}^{1-s}(\Omega)$.
\end{proof}

We now present an a priori error estimate for the semi-discrete optimal control problem. The proof is inspired by the original one introduced by Hinze in \cite{Hinze:05} and it is based on the error estimate \eqref{Hl2optimal_rate}. However, we recall the arguments to verify that they are still valid in the anisotropic framework of \cite{NOS} summarized in \S~\ref{subsec:state_equation}, \EO{and under the regularity properties of the optimal control $\orsf$ dictated by Lemma \ref{LM:control_reg_2}.}

\begin{theorem}[variational approach: error estimate]
\label{TH:variational_error}
Let the pairs $(\bar{v}(\orsf), \orsf)$ and $(\bar{V}(\ogsf), \ogsf)$ be the solutions to the truncated and the semi-discrete optimal control problems, respectively. Then, under the framework of Lemma \ref{LM:control_reg_2}, we have
\begin{equation}
\label{eq:variational_error}
\| \orsf - \ogsf \|_{L^2(\Omega)} \lesssim \Y^{2s}  (\# \T_{\Y})^{ -\frac{1+s}{n+1} } 
\left( \| \orsf \|_{\mathbb{H}^{1-s}(\Omega)} + \| \usfd \|_{\mathbb{H}^{1-s}(\Omega)} \right),
\end{equation}
and
\begin{equation}
\label{eq:variational_errorstate}
\| \tr (\bar{v} - \bar{V}) \|_{L^2(\Omega)} \lesssim \Y^{2s} (\# \T_{\Y})^{ -\frac{1+s}{n+1} } 
( \| \orsf \|_{\mathbb{H}^{1-s}(\Omega)}  
+ \| \usfd \|_{\mathbb{H}^{1-s}(\Omega)} ),
\end{equation}
where $\Y \approx |\log(\# \T_{\Y})|$. 
\end{theorem}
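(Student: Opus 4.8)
The plan is to follow Hinze's variational-discretization argument, adapted to the anisotropic setting summarized in \S\ref{subsec:state_equation}. First I would test the variational inequality of the truncated system \eqref{op_truncated} with $\rsf = \ogsf \in \Zad$ and the variational inequality of the semi-discrete system \eqref{op_discrete} with $\gsf = \orsf \in \Zad$, then add the resulting inequalities. Since the control-to-state operators $\mathbf{H}$ and $\mathbf{H}_{\T_\Y}$ are self-adjoint, this yields, after introducing $\bar{p}$ and $\bar{P}$ via $\tr \bar{p} = \mathbf{H}(\mathbf{H}\orsf - \usfd)$ and $\tr \bar{P} = \mathbf{H}_{\T_\Y}(\mathbf{H}_{\T_\Y}\ogsf - \usfd)$,
\[
\mu \| \orsf - \ogsf \|_{L^2(\Omega)}^2 \leq (\tr \bar{p} - \tr \bar{P}, \ogsf - \orsf)_{L^2(\Omega)}.
\]
The next step is the standard trick of adding and subtracting the ``mixed'' adjoint state: insert $\mathbf{H}_{\T_\Y}(\mathbf{H}\orsf - \usfd)$, so that the right-hand side splits into a term involving $(\mathbf{H} - \mathbf{H}_{\T_\Y})(\mathbf{H}\orsf - \usfd)$ and a term involving $\mathbf{H}_{\T_\Y}(\mathbf{H} - \mathbf{H}_{\T_\Y})\orsf$ tested against $\ogsf - \orsf$.

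For the first term I would apply Cauchy--Schwarz and then the a priori estimate \eqref{Hl2optimal_rate}, which requires the datum $\mathbf{H}\orsf - \usfd = \tr \bar{v}(\orsf) - \usfd$ to lie in $\mathbb{H}^{1-s}(\Omega)$; this is exactly where Remark~\ref{rm:regularity_control_r} (the truncated analogue of Remark~\ref{rk:reg_states}) together with $\usfd \in \Ws$ is invoked, giving a bound by $\Y^{2s}(\#\T_\Y)^{-(1+s)/(n+1)}(\|\orsf\|_{\mathbb{H}^{1-s}(\Omega)} + \|\usfd\|_{\mathbb{H}^{1-s}(\Omega)})$ times $\|\ogsf - \orsf\|_{L^2(\Omega)}$. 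For the second term I would use the self-adjointness and boundedness (in $L^2(\Omega)$) of $\mathbf{H}_{\T_\Y}$ to move it onto $\ogsf - \orsf$ — or, more cleanly, bound $\|\mathbf{H}_{\T_\Y}(\mathbf{H} - \mathbf{H}_{\T_\Y})\orsf\|_{L^2(\Omega)} \lesssim \|(\mathbf{H} - \mathbf{H}_{\T_\Y})\orsf\|_{L^2(\Omega)}$ and again apply \eqref{Hl2optimal_rate}, which here needs $\orsf \in \mathbb{H}^{1-s}(\Omega)$, supplied by Lemma~\ref{LM:control_reg_2}. Combining, a Young's inequality absorbs the $\|\orsf - \ogsf\|_{L^2(\Omega)}$ factor and yields \eqref{eq:variational_error}.

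The state estimate \eqref{eq:variational_errorstate} then follows by writing $\tr(\bar{v} - \bar{V}) = (\mathbf{H} - \mathbf{H}_{\T_\Y})\orsf + \mathbf{H}_{\T_\Y}(\orsf - \ogsf)$, applying \eqref{Hl2optimal_rate} to the first summand (using $\orsf \in \mathbb{H}^{1-s}(\Omega)$) and the $L^2$-boundedness of $\mathbf{H}_{\T_\Y}$ plus \eqref{eq:variational_error} to the second. Throughout, $\Y \approx |\log(\#\T_\Y)|$ is chosen exactly as in Theorem~\ref{TH:fl_error_estimates} so that the truncation error from Lemma~\ref{LE:exp_convergence}, which is of order $e^{-\sqrt{\lambda_1}\Y/4}$, is dominated by the algebraic rate — though note that since the comparison is between the \emph{truncated} and \emph{semi-discrete} problems here, the truncation error does not actually enter \eqref{eq:variational_error} and \eqref{eq:variational_errorstate}. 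The main obstacle I anticipate is purely bookkeeping: verifying that the regularity hypothesis $\tr\bar{v}(\orsf) - \usfd \in \mathbb{H}^{1-s}(\Omega)$ genuinely holds under the stated assumptions on $\asf,\bsf,\usfd$ and \eqref{ab_condition}, i.e. checking that the chain Lemma~\ref{LM:reg_control} $\to$ Remark~\ref{rk:reg_states} $\to$ Remark~\ref{rm:regularity_control_r} delivers the needed regularity for $\tr\bar{v}$ uniformly in the relevant range of $s$, and confirming that the constants hidden in $\lesssim$ do not depend on $\Y$ or the mesh — the anisotropic interpolation machinery of \cite{NOS,NOS3} already guarantees the latter for \eqref{Hl2optimal_rate}, so no new approximation-theoretic work is required.
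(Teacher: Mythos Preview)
Your approach is essentially the paper's: the same testing of the two variational inequalities, the same splitting via the control-to-state operators, and the same application of \eqref{Hl2optimal_rate} together with the regularity chain Lemma~\ref{LM:reg_control} $\to$ Remark~\ref{rk:reg_states} $\to$ Remark~\ref{rm:regularity_control_r}. The state estimate is also handled exactly as in the paper.

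One bookkeeping point: after inserting $\mathbf{H}_{\T_\Y}(\mathbf{H}\orsf - \usfd)$ you say the right-hand side splits into \emph{two} terms, but in fact there is a third,
\[
(\mathbf{H}_{\T_\Y}^2(\orsf - \ogsf),\, \ogsf - \orsf)_{L^2(\Omega)} = -\|\mathbf{H}_{\T_\Y}(\orsf - \ogsf)\|_{L^2(\Omega)}^2 \le 0,
\]
which the paper isolates explicitly (their term $\textrm{III}$) and drops by sign. Without this observation the decomposition does not close. It is trivial to fix, but you should state it.
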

\begin{proof}
Similarly to the proof of Lemma \eqref{LE:exp_convergence}, we set $\rsf = \ogsf \in \Zad$ and $\gsf = \orsf \in \Zad$ in the variational inequalities of the systems \eqref{op_truncated} and \eqref{op_discrete} respectively. Adding the obtained results, we arrive at
\[
\mu \| \orsf - \ogsf \|^2_{L^2(\Omega)} 
\leq ( \tr (\bar{p} -  \bar{P}), \ogsf - \orsf )_{L^2(\Omega)}. 
\]
We now proceed to use the relations
$\tr \bar{p} =  \tr \bar{p}(\orsf) = \mathbf{H} (\mathbf{H}\orsf-\usfd)$ and 
$ \tr \bar{P} = \tr \bar{P}(\ogsf) = \mathbf{H}_{\T_{\Y}} (\mathbf{H}_{\T_{\Y}}\ogsf-\usfd)$, 
to rewrite the inequality above as
\[
\mu \| \orsf - \ogsf \|^2_{L^2(\Omega)} 
\leq ( \mathbf{H}^2 \orsf - \mathbf{H}^2_{\T_{\Y}} \ogsf + (\mathbf{H}_{\T_{\Y}} - \mathbf{H}) \usfd, \ogsf - \orsf )_{L^2(\Omega)}
\]
which, by adding and subtracting the term $\mathbf{H}_{\T_{\Y}} \EO{\mathbf{H}} \orsf$, yields
\[
 \mu \| \orsf - \ogsf \|^2_{L^2(\Omega)} \leq \big( (\mathbf{H}   - \mathbf{H}_{\T_{\Y}}) \mathbf{H} \orsf + \mathbf{H}_{\T_{\Y}}\mathbf{H}\orsf - \mathbf{H}^2_{\T_{\Y}} \ogsf + (\mathbf{H}_{\T_{\Y}} - \mathbf{H}) \usfd, \ogsf - \orsf \big)_{L^2(\Omega)}.
\]
We now add and subtract the term \EO{$\mathbf{H}_{\T_{\Y}}^2 \orsf$} to arrive at
\begin{multline}
\nonumber
\mu \| \orsf - \ogsf \|^2_{L^2(\Omega)}  \leq 
\EO{((\mathbf{H} -\mathbf{H}_{\T_{\Y}}) \mathbf{H} \orsf, \ogsf - \orsf)_{L^2(\Omega)} +  
(\mathbf{H}_{\T_\Y} ( \mathbf{H} -\mathbf{H}_{\T_\Y} ) \orsf, \ogsf - \orsf)_{L^2(\Omega)}}
\\
\nonumber \EO{
+ (\mathbf{H}_{\T_\Y} ^2 ( \orsf - \ogsf), \ogsf - \orsf)_{L^2(\Omega)}
+ ((\mathbf{H}_{\T_{\Y}} - \mathbf{H}) \usfd, \ogsf - \orsf )_{L^2(\Omega)} = \textrm{I} + \textrm{II} + \textrm{III} + \textrm{IV}.}
\end{multline}
\EO{We estimate the term \textrm{I} as follows: 
\begin{align*} 
| \textrm{I} | \leq  \| (\mathbf{H} -\mathbf{H}_{\T_{\Y}}) \mathbf{H} \orsf\|_{L^2(\Omega)} \| \ogsf - \orsf \|_{L^2(\Omega)}  \lesssim \Y^{2s}(\# \T_{\Y})^{-\frac{1+s}{n+1}} \| \mathbf{H} \orsf \|_{\mathbb{H}^{1-s}(\Omega)} \| \ogsf - \orsf \|_{L^2(\Omega)}
\end{align*}
where we have used the approximation property \eqref{Hl2optimal_rate}. Now, since $\mathbf{H} \orsf = \tr v(\orsf)$ and $\orsf \in H^1(\Omega) \cap \mathbb{H}^{1-s}(\Omega)$, the arguments developed in \cite[Remark 4.4]{NOS3}, in conjunction with Remark \ref{rk:reg_states}, yield $\| \tr v(\orsf) \|_{\Ws} \lesssim \| \orsf  \|_{\Ws}$. The estimate for terms \textrm{II} and \textrm{IV} follow exactly the same arguments by using the continuity of $\mathbf{H}_{\T_{\Y}}$.}

\EO{The desired estimate \eqref{eq:variational_error} is then a consequence of the derived estimates in conjunction with the fact that \textrm{III} $ \leq 0$.}

Finally, we prove the estimate \eqref{eq:variational_errorstate}. Since $\bar{v} = \bar{v}(\orsf)$ and $\tr \bar{V} = \tr \bar{V}(\ogsf)$, we conclude that
that
\begin{align}
\nonumber
\| \tr (\bar{v} - \bar{V}) \|_{L^2(\Omega)} & = 
\|  \mathbf{H} \orsf  - \mathbf{H}_{\T_{\Y}} \ogsf \|_{L^2(\Omega)} 
\leq \|  \EO{(\mathbf{H} - \mathbf{H}_{\T_\Y})\orsf}  \|_{L^2(\Omega)} 
+ \|  \EO{\mathbf{H}_{\T_\Y} (\orsf - \ogsf)} \|_{L^2(\Omega)}
\\
\label{aux:hinze2}
&
\lesssim \Y^{2s}(\# \T_{\Y})^{-\frac{1+s}{n+1}}\left( \| \orsf \|_{\mathbb{H}^{1-s}(\Omega)} + \|\usf_d \|_{\Ws}\right),
\end{align}
where we have used the estimates \eqref{Hl2optimal_rate} and \eqref{eq:variational_error}, and the continuity of $\mathbf{H}_{\T_{\Y}}$. This yields \eqref{eq:variational_errorstate} and concludes the proof.
\end{proof}

\begin{remark}[variational approach: advantages and disadvantages]
\rm The key advantage of the variational approach is obtaining an optimal quadratic rate of convergence for the control \cite[Theorem 2.4]{Hinze:05}. However, given \eqref{l2optimal_ratesinlog}, in our case it allows us to derive \eqref{eq:variational_error}, which is suboptimal in terms of order but optimal in terms of regularity. From an implementation perspective this technique may lead to a control which is not discrete in the current mesh and thus requires an independent mesh.
\end{remark}

\begin{remark}[anisotropic meshes]
\rm Examining the proof of Theorem \ref{TH:variational_error}, we realize that the critical steps, where the anisotropy of the mesh $\T_{\Y}$ is needed, are both, \EO{at estimating the term \textrm{I} and in \eqref{aux:hinze2}.} The analysis developed in \cite{Hinze:05} allows the use anisotropic meshes through the estimate \eqref{Hl2optimal_rate}. This fact can be observed in Theorem \ref{TH:variational_error} and has also been exploited to address control problems on nonconvex domains; see \cite[\S~6]{APR:12}, \cite[\S~3]{ARD:09} and \cite[\S~4]{ARW:07}.
\end{remark}

We conclude this subsection with the
following  consequence of Theorem 
\ref{TH:variational_error}.

\begin{corollary}[fractional control problem: error estimate]
\label{CR:variational_error2}
Let $(\bar{V},\ogsf) $ $\in \V(\T_\Y) \times \Zad$ solve the semi-discrete optimal control problem and $\bar{U} \in \U(\T_{\Omega})$ be defined as in \eqref{Hinze_U}. Then, under the framework of Lemma \eqref{LM:control_reg_2}, we have
\begin{equation}
\label{Hinze1}
  \| \ozsf - \ogsf \|_{L^2(\Omega)} \lesssim \Y^{2s} (\# \T_{\Y})^{ -\frac{1+s}{n+1} } 
( \| \orsf \|_{\mathbb{H}^{1-s}(\Omega)} 
+ \| \usfd \|_{\mathbb{H}^{1-s}(\Omega)}),
\end{equation} 
and
\begin{equation}
\label{Hinze2}
\| \ousf - \bar{U} \|_{L^2(\Omega)} \lesssim  \Y^{2s}(\# \T_{\Y})^{- \frac{1+s}{n+1} } 
\left( \| \orsf \|_{\mathbb{H}^{1-s}(\Omega)} + \| \usfd \|_{\mathbb{H}^{1-s}(\Omega)} \right),
\end{equation}
where $\Y \approx |\log(\# \T_{\Y})|$ and $(\bar{\usf},\ozsf) \in \Hs \times \Zad$ solves \eqref{Jintro}-\eqref{cc}.
\end{corollary}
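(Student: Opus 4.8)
The plan is to combine the triangle inequality with the two main estimates already at our disposal: the exponential convergence of the truncated control problem to the fractional one (Lemma \ref{LE:exp_convergence}) and the discretization error for the semi-discrete scheme (Theorem \ref{TH:variational_error}). Recall from Theorem \ref{TM:equivalence} that $\ozsf = \oqsf$, so the fractional optimal control coincides with the extended one; and from \eqref{Hinze_U} that $\bar{U} = \tr \bar{V}$, while $\tr \oue = \ousf$. Thus the quantities we must bound decompose naturally through the truncated optimal control $\orsf$ and the truncated optimal state $\tr \bar{v}$.

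First I would write, for the control error,
\[
\| \ozsf - \ogsf \|_{L^2(\Omega)} \leq \| \ozsf - \orsf \|_{L^2(\Omega)} + \| \orsf - \ogsf \|_{L^2(\Omega)}.
\]
For the first term I invoke Lemma \ref{LE:exp_convergence}, estimate \eqref{control_exp}, which gives $\| \orsf - \ozsf \|_{L^2(\Omega)} \lesssim e^{-\sqrt{\lambda_1}\Y/4} \| \orsf \|_{L^2(\Omega)}$; for the second term I invoke Theorem \ref{TH:variational_error}, estimate \eqref{eq:variational_error}. Since the theorem prescribes the balance $\Y \approx |\log(\# \T_{\Y})|$, the exponential term $e^{-\sqrt{\lambda_1}\Y/4}$ becomes an algebraic factor $(\# \T_{\Y})^{-\sqrt{\lambda_1}/4}$ which, choosing the implicit constant in $\Y \approx |\log(\# \T_\Y)|$ large enough (as in \cite[Remark 4.5]{NOS3} and the derivation of Theorem \ref{TH:fl_error_estimates}), is dominated by $\Y^{2s}(\# \T_{\Y})^{-(1+s)/(n+1)}$. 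Also $\| \orsf \|_{L^2(\Omega)} \lesssim \| \orsf \|_{\Ws}$ by the embedding $\Ws \hookrightarrow L^2(\Omega)$, so both terms are absorbed into the right-hand side of \eqref{Hinze1}.

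Next, for the state error, I use $\ousf = \tr \oue$ and $\bar{U} = \tr \bar{V}$ together with the split
\[
\| \ousf - \bar{U} \|_{L^2(\Omega)} \leq \| \tr(\oue - \bar{v}) \|_{L^2(\Omega)} + \| \tr(\bar{v} - \bar{V}) \|_{L^2(\Omega)},
\]
bounding the first term by \eqref{state_exp} of Lemma \ref{LE:exp_convergence} and the second by \eqref{eq:variational_errorstate} of Theorem \ref{TH:variational_error}, again using $\Y \approx |\log(\# \T_\Y)|$ to absorb the exponential factor and the $\Ws \hookrightarrow L^2(\Omega)$ embedding, which yields \eqref{Hinze2}. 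Throughout, the regularity hypotheses needed — $\asf,\bsf \in H^1(\Omega)$, $\usf_d \in \Ws$, and \eqref{ab_condition} for $s \in (0,\tfrac12]$ — are exactly those collected in Lemma \ref{LM:control_reg_2}, under whose framework the corollary is stated, so $\orsf \in \Ws$ is available.

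The one point deserving care, and the only genuine obstacle, is the interplay between the truncation parameter $\Y$ and the number of degrees of freedom: one must check that the exponential-in-$\Y$ terms from Lemma \ref{LE:exp_convergence} do not dominate the algebraic-in-$\# \T_\Y$ rate. This is handled precisely as in \cite{NOS,NOS3}: the logarithmic scaling $\Y \approx |\log(\# \T_\Y)|$ converts $e^{-c\Y}$ into $(\# \T_\Y)^{-c}$, and since the constant in the scaling can be chosen so that $c \geq (1+s)/(n+1)$, this contribution is subsumed. No new ideas are required beyond assembling the pieces.
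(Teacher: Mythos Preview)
Your proposal is correct and follows essentially the same route as the paper: split via the triangle inequality through the truncated optimal variables $(\bar v,\orsf)$, apply Lemma~\ref{LE:exp_convergence} to the truncation error and Theorem~\ref{TH:variational_error} to the discretization error, and use $\Y \approx |\log(\#\T_\Y)|$ to absorb the exponential term. The paper's proof is terser (it does not spell out the embedding $\Ws \hookrightarrow L^2(\Omega)$ or the choice of constant in the logarithmic scaling), but the argument is identical.
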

\begin{proof}
We recall that $(\oue,\ozsf) \in \HL(y^{\alpha},\C) \times \Zad$ and $(\bar{v},\orsf) \in \HL(y^{\alpha},\C_{\Y}) \times \Zad$ solve the extended and truncated optimal control problems, respectively. Then, triangle inequality in conjunction with 
Lemma \ref{LE:exp_convergence}, Lemma \ref{LM:control_reg_2} and Theorem \ref{TH:variational_error} yield
\begin{align*}
\| \ozsf - \ogsf  \|_{L^2(\Omega)} & \leq 
 \| \ozsf - \orsf  \|_{L^2(\Omega)} +  \| \orsf - \ogsf  \|_{L^2(\Omega)} 
\\
& \lesssim \left( e^{-\sqrt{\lambda_1} \Y/4}  + \Y^{2s} (\# \T_{\Y})^{-\frac{1+s}{n+1}} \right)
\left( \| \orsf \|_{\mathbb{H}^{1-s}(\Omega)}  +  \| \usf_d \|_{\mathbb{H}^{1-s}(\Omega)}\right)
\\
& \lesssim  |\log(\# \T_{\Y})|^{2s}(\# \T_{\Y})^{-\frac{1+s}{n+1}} \left( \| \orsf \|_{\mathbb{H}^{1-s}(\Omega)}  +  
\| \usf_d \|_{\mathbb{H}^{1-s}(\Omega)}\right),
\end{align*}
which is exactly the desired estimate \eqref{Hinze1}. In the last inequality above
we have used $\Y \approx \log(\# (\T_\Y))$; see \cite[Remark 5.5]{NOS}. 
To derive \eqref{Hinze2}, we proceed as follows:
\begin{align*}
\| \ousf - \bar{U} \|_{L^2(\Omega)} 
&\leq 
\| \ousf - \tr \bar{v}  \|_{L^2(\Omega)} +  \| \tr \bar{v} - \bar{U} \|_{L^2(\Omega)} 
\\
&\lesssim  |\log(\# \T_{\Y})|^{2s}(\# \T_{\Y})^{-\frac{1+s}{n+1}} 
\left( \| \orsf \|_{\mathbb{H}^{1-s}(\Omega)}   + \| \usf_d \|_{\mathbb{H}^{1-s}(\Omega)}\right),
\end{align*}
where we have used \eqref{state_exp} and \eqref{eq:variational_errorstate}. This concludes the proof.
\end{proof}

\subsection{A fully discrete scheme}
\label{subsec:fd}

The goal of this subsection is to introduce and analyze a fully-discrete scheme to solve the fractional optimal control problem \eqref{Jintro}-\eqref{cc}. We propose the following fully-discrete approximation of the truncated control problem analyzed in \S\ref{sec:control_truncated}:
$
 \EO{\text{min } J(\tr V , Z)},  
$
subject to the discrete state equation
\begin{equation}
\label{fd_a}
  a_\Y(V,W) =  \langle Z, \tr W \rangle
    \quad \forall W \in \V(\T_{\Y}),
\end{equation}
and the discrete control constraints
$
Z \in \mathbb{Z}_{ad}(\T_{\Omega}).
$
The functional $J$ and the discrete space $\V(\T_{\Y})$ are defined by \eqref{functional} and \eqref{eq:FESpace}, respectively. In addition,
\begin{equation*}
\mathbb{Z}_{ad}(\T_{\Omega}) = \Zad \cap \left\{
            Z \in L^{\infty}( \Omega ): Z|_K \in \mathbb{P}_0(K) \quad \forall K \in \T_\Omega \right\}
\end{equation*}
denotes the discrete and admissible set of controls, which is discretized by piecewise constant functions. \EO{To simplify the exposition, in what follows we assume that, in the definition of $\Zad$, given by \eqref{ac}, $\asf$ and $\bsf$ are constant. For convenience, we will refer to the problem previously defined as the \emph{fully-discrete optimal control problem.}}

We denote by $(\bar{V}, \bar{Z}) \in \V(\T_\Y) \times  \mathbb{Z}_{ad}(\T_{\Omega})$ 
the optimal pair solving the fully-discrete optimal control problem. Then, by setting
\begin{equation}
\label{Hinze_Ufd}
 \bar{U}:= \tr \bar{V},
\end{equation}
we obtain a fully-discrete approximation $(\bar{U},\bar{Z}) \in  \U(\T_{\Omega}) \times 
\mathbb{Z}_{ad}(\T_{\Omega})$ of the optimal pair $(\ousf,\ozsf) \in \Hs \times \Zad$ solving the fractional optimal control problem \eqref{Jintro}-\eqref{cc}.

\begin{remark}[locality] 
\rm 
The main advantage of the fully-discrete optimal control problem  
is that involves the local problem \eqref{fd_a} as state equation.
\end{remark}

We define the discrete control-to-state operator
$\mathbf{H}_{\T_\Y}: \mathbb{Z}_{ad}(\T_{\Omega}) \rightarrow \U(\T_\Y) $, which given a discrete control $Z\in \mathbb{Z}_{ad}(\T_{\Omega})$ associates a unique discrete state $\mathbf{H}_{\T_\Y}Z = \tr V (Z) $ solving the discrete problem \eqref{fd_a}. 

We define the optimal adjoint state 
$\bar{P}= \bar{P}(\bar{Z}) \in \V(\T_\Y)$ to be the solution to
\begin{equation}
\label{fd_adjoint}
a_\Y( \bar{P},W) = ( \tr \bar{V} - \usfd , \textrm{tr}_{\Omega} W )_{L^2(\Omega)} \quad \forall W \in \V(\T_{\Y}).
\end{equation}

We present the following result, which follows along the same lines as the proof of Theorem \ref{TH:fractional_control}. For brevity, we skip the details.
\begin{theorem}[existence, uniqueness and optimality system]
The fully-discrete optimal control problem has a unique optimal solution 
$(\bar{V}, \bar{Z}) $ $\in$ $ \V(\T_{\Y})\times \mathbb{Z}_{ad}(\T_{\Omega})$. 
The optimality system
\begin{equation}
\begin{dcases}
 \bar{V} = \bar{V}(\bar{Z}) \in \V(\T_\Y) \textrm{ solution of } \eqref{fd_a}, \\
 \bar{P} = \bar{P}(\bar{Z}) \in \V(\T_\Y) \textrm{ solution of } \eqref{fd_adjoint}, \\
 \bar{Z} \in \mathbb{Z}_{ad}(\T_{\Omega}), \quad 
 (\tr \bar{P} + \mu \bar{Z}, Z- \bar{Z})_{L^2(\Omega)} \geq 0 
 \quad \forall Z \in \mathbb{Z}_{ad}(\T_{\Omega}),
\end{dcases}
\label{fd_op}
\end{equation}
hold. These conditions are necessary and sufficient.
\end{theorem}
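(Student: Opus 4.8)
The plan is to mirror the proof of Theorem \ref{TH:fractional_control} verbatim, since the fully-discrete problem is structurally identical: a strictly convex quadratic functional minimized over a nonempty, closed, bounded, convex subset of a finite-dimensional space. First I would eliminate the state by introducing the reduced cost functional
\[
 f_{\T_\Y}(Z) := \tfrac{1}{2}\| \mathbf{H}_{\T_\Y} Z - \usf_d\|_{L^2(\Omega)}^2 + \tfrac{\mu}{2}\| Z\|_{L^2(\Omega)}^2,
\]
using the discrete control-to-state operator $\mathbf{H}_{\T_\Y}$ defined just above the statement; the constraint set $\mathbb{Z}_{ad}(\T_\Omega)$ is nonempty (it contains, e.g., the $L^2$-projection of any admissible control onto piecewise constants, which lies in $\Zad$ when $\asf,\bsf$ are constant), closed, bounded and convex. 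Since $\mu>0$, $f_{\T_\Y}$ is strictly convex and coercive, so by the direct method (an infimizing sequence in the finite-dimensional space $\mathbb{Z}_{ad}(\T_\Omega)$ admits a convergent subsequence, and $f_{\T_\Y}$ is continuous) a minimizer $\bar Z$ exists, and strict convexity gives uniqueness; this is \cite[Theorem 2.14]{Tbook} applied in the discrete setting.

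Next I would derive the optimality condition. Because $f_{\T_\Y}$ is Fréchet differentiable with
\[
 f_{\T_\Y}'(Z)\,\delta = (\mathbf{H}_{\T_\Y}(\mathbf{H}_{\T_\Y}Z - \usf_d) + \mu Z,\ \delta)_{L^2(\Omega)},
\]
where I use that $\mathbf{H}_{\T_\Y}$ is self-adjoint with respect to the $L^2(\Omega)$-inner product, the first-order necessary condition for a convex minimization over the convex set $\mathbb{Z}_{ad}(\T_\Omega)$ reads $f_{\T_\Y}'(\bar Z)(Z-\bar Z)\ge 0$ for all $Z\in\mathbb{Z}_{ad}(\T_\Omega)$ (cf.~\cite[Theorem 2.22]{Tbook}). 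Identifying $\mathbf{H}_{\T_\Y}(\mathbf{H}_{\T_\Y}\bar Z - \usf_d)$ with $\tr\bar P$, where $\bar P = \bar P(\bar Z)$ solves the discrete adjoint equation \eqref{fd_adjoint} driven by $\tr\bar V - \usf_d$ (this follows from the definition of $\mathbf{H}_{\T_\Y}$ and the symmetry of $a_\Y$), turns this into the variational inequality in \eqref{fd_op}. The coupling with $\bar V = \bar V(\bar Z)$ solving \eqref{fd_a} and $\bar P$ solving \eqref{fd_adjoint} completes the optimality system. Sufficiency is immediate: for a convex differentiable functional over a convex set, a point satisfying the variational inequality is automatically a global minimizer.

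There is essentially no genuine obstacle here — the argument is a routine finite-dimensional specialization of Theorems \ref{TH:fractional_control}, \ref{TH:extended_control}, and \ref{TH:truncated_control}. The only point requiring a moment's care is confirming that $\mathbb{Z}_{ad}(\T_\Omega)$ is nonempty and that the self-adjointness of $\mathbf{H}_{\T_\Y}$ (needed to write the derivative in the stated symmetric form and to produce the adjoint characterization $\tr\bar P = \mathbf{H}_{\T_\Y}(\mathbf{H}_{\T_\Y}\bar Z - \usf_d)$) indeed holds; the latter is recorded in the paragraph preceding the theorem, and the former uses the assumption that $\asf,\bsf$ are constant so that piecewise-constant functions can satisfy the box constraints. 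Accordingly, as the paper itself indicates, the proof can be dispatched by referring to the proof of Theorem \ref{TH:fractional_control} with these discrete substitutions, and I would simply state that and omit the repeated details.
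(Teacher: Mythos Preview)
Your proposal is correct and matches the paper's approach exactly: the paper states that the result ``follows along the same lines as the proof of Theorem~\ref{TH:fractional_control}'' and skips the details, which is precisely what you outline and then propose to do. Your additional remarks on the nonemptiness of $\mathbb{Z}_{ad}(\T_\Omega)$ and the self-adjointness of $\mathbf{H}_{\T_\Y}$ are appropriate checks but introduce nothing beyond what the paper implicitly relies on.
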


To derive a priori error estimates for the fully-discrete optimal control problem, we recall the $L^2$-orthogonal projection operator $\Pi_{\T_{\Omega}}: L^2(\Omega) \rightarrow \mathbb{P}_{0}(\T_{\Omega})$ defined by 
\begin{equation}
\label{o_p}
(\rsf - \Pi_{\T_{\Omega}}\rsf , Z ) = 0 \qquad \forall Z \in  \mathbb{Z}_{ad}(\T_{\Omega});
\end{equation}
see \cite{CiarletBook,Guermond-Ern}. The space $\mathbb{P}_{0}(\T_{\Omega})$ denote the space of piecewise constants over the mesh $\T_{\Omega}$. We also recall the following properties of the operator $\Pi_{\T_{\Omega}}$: for all $\rsf \in L^2(\Omega)$, we have
$
\|\Pi_{\T_{\Omega}}\rsf \|_{L^2(\Omega)} \lesssim
 \|\rsf \|_{L^2(\Omega)}.
$
In addition, if $\rsf \in H^{1}(\Omega)$, we have
\begin{equation}
\label{o_p:aprox}
\| \rsf - \Pi_{\T_{\Omega}}\rsf \|_{L^2(\Omega)} \lesssim h_{\T_{\Omega}}\|\rsf \|_{H^1(\Omega)},
\end{equation}
where $h_{\T_{\Omega}}$ denotes the mesh-size of $\T_{\Omega}$; see \cite[Lemma 1.131 and Proposition 1.134]{Guermond-Ern}. Moreover, given $\rsf \in L^2(\Omega)$, \eqref{o_p} immediately yields
$
 \Pi_{\T_{\Omega}}\rsf |_K = (1/|K|)\int_{K} \rsf.
$
Consequently, since $\asf(x') \equiv \asf$ and $\bsf(x') \equiv \bsf$ for all $x' \in \Omega$, we conclude that $\Pi_{\T_{\Omega}}\rsf \in \mathbb{Z}_{ad}(\T_{\Omega})$, and then $\Pi_{\T_{\Omega}}: L^2(\Omega) \rightarrow \mathbb{Z}_{ad}(\T_{\Omega})$ is well defined.

\EO{Inspired by \cite{MV:08}, we now introduce two auxiliary problems. The first one reads: Find $Q \in \mathbb{V}(\T_{\Y})$ such that
\begin{equation}
\label{eq:aux1}
a_\Y( Q ,W) = ( \tr V(\orsf) - \usfd , \textrm{tr}_{\Omega} W )_{L^2(\Omega)} \quad \forall W \in \mathbb{V}(\T_{\Y}).
\end{equation}
The  second  one  is: Find $R \in \mathbb{V}(\T_{\Y})$ such that
\begin{equation}
\label{eq:aux2}
a_\Y( R ,W) = ( \tr \bar{v} - \usfd , \textrm{tr}_{\Omega} W )_{L^2(\Omega)} \quad \forall W \in \mathbb{V}(\T_{\Y}).
\end{equation}}
We now derive error estimates for the fully-discrete optimal control problem.

\begin{theorem}[fully discrete scheme: error estimate]
\label{TH:fd_error}
If $\usf_d \in \Ws$, \EO{\eqref{ab_condition} holds for $s \in (0,\tfrac{1}{2}]$,} and $(\bar{v}(\orsf), \orsf)$ 
and $(\bar{V}(\bar{Z}), \bar{Z})$ solve the truncated and the fully-discrete optimal control problems, respectively, then 
\begin{equation}
\label{eq:fd_error}
\| \orsf - \bar{Z} \|_{L^2(\Omega)} \lesssim 
\Y^{2s}(\# \T_{\Y} )^{-\frac{1}{n+1}}
\left( \| \orsf \|_{H^1(\Omega)} + \| \usfd \|_{\Ws} \right)
\end{equation}
and
\begin{equation}
\label{eq:fd_errorstate}
 \| \tr (\bar{v} - \bar{V}) \|_{\Hs} \lesssim \Y^{2s}(\# \T_{\Y} )^{-\frac{1}{n+1}}  
\left(  \| \orsf \|_{H^1(\Omega)} + \| \usfd \|_{\Ws} \right). 
\end{equation}
where $\Y \approx |\log(\# \T_{\Y})|$. 
\end{theorem}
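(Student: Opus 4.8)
The plan is to mimic the variational-approach argument of Theorem~\ref{TH:variational_error}, but now accounting for the extra error incurred by discretizing the control space with piecewise constants. First I would test the variational inequality in the third line of \eqref{op_truncated} with $\rsf = \Pi_{\T_\Omega}\bar{Z} \in \Zad$ (this is admissible because $\asf,\bsf$ are constant, so $\Pi_{\T_\Omega}$ maps into $\Zad$) and the discrete variational inequality in \eqref{fd_op} with $Z = \Pi_{\T_\Omega}\orsf \in \mathbb{Z}_{ad}(\T_\Omega)$. Adding the two inequalities, using the defining property \eqref{o_p} of $\Pi_{\T_\Omega}$ to replace $\Pi_{\T_\Omega}\orsf$ by $\orsf$ in $L^2$-inner products against discrete functions, and inserting $\pm\, \tr R$ and $\pm\, \tr Q$ (the solutions of the auxiliary problems \eqref{eq:aux1}--\eqref{eq:aux2}), I would arrive at a bound of the form
\[
\mu\|\orsf-\bar{Z}\|_{L^2(\Omega)}^2 \lesssim \big|(\tr(\bar{p}-R),\ \orsf-\bar{Z})\big| + \big|(\tr(R-Q),\ \orsf-\bar{Z})\big| + \big|(\tr(Q-\bar{P}),\ \orsf-\bar{Z})\big| + \mu\|\orsf - \Pi_{\T_\Omega}\orsf\|_{L^2(\Omega)}\|\orsf-\bar{Z}\|_{L^2(\Omega)},
\]
where the $\bar Z$-vs-$Q$ term is handled by a monotonicity/self-adjointness argument analogous to term~I in Theorem~\ref{TH:variational_error} and is nonpositive (or absorbable), so it drops out.

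Next I would estimate each remaining term. The term $\tr(\bar p - R)$ is exactly the finite element error for the adjoint equation with right-hand side $\tr\bar v - \usfd$, so Theorem~\ref{TH:fl_error_estimates} (applied to the adjoint, whose datum $\tr\bar v - \usfd$ lies in $\Ws$ by Remark~\ref{rk:reg_states} and Remark~\ref{rm:regularity_control_r}) gives $\|\tr(\bar p - R)\|_{L^2(\Omega)} \lesssim \Y^{2s}(\#\T_\Y)^{-(1+s)/(n+1)}(\|\orsf\|_{\Ws}+\|\usfd\|_{\Ws})$. The term $\tr(R-Q)$ is controlled, via the discrete stability of $a_\Y$ and the trace estimate \eqref{Trace_estimate}, by $\|\tr(\bar v - V(\orsf))\|_{L^2(\Omega)}$, which is itself a state finite element error bounded by Theorem~\ref{TH:fl_error_estimates}. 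The term $\tr(Q - \bar P)$ reduces, after subtracting the discrete state equations, to $\|\mathbf H_{\T_\Y}(\orsf - \bar Z)\|_{L^2(\Omega)}$ and hence to $\|\orsf - \bar Z\|_{L^2(\Omega)}$ times the continuity constant of $\mathbf H_{\T_\Y}$ — together with the fourth term this produces a contribution $\lesssim (\|\orsf-\Pi_{\T_\Omega}\orsf\|_{L^2(\Omega)} + \text{higher order}) \|\orsf-\bar Z\|_{L^2(\Omega)}$, at which point one more monotonicity observation (the $Q$-vs-$\bar P$ inner product, expanded via $\tr\bar P = \mathbf H_{\T_\Y}(\mathbf H_{\T_\Y}\bar Z - \usfd)$, yields $-\|\mathbf H_{\T_\Y}(\orsf-\bar Z)\|_{L^2(\Omega)}^2 \le 0$) lets me discard the problematic square. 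The projection error $\|\orsf - \Pi_{\T_\Omega}\orsf\|_{L^2(\Omega)} \lesssim h_{\T_\Omega}\|\orsf\|_{H^1(\Omega)}$ by \eqref{o_p:aprox}, and since $h_{\T_\Omega}\approx (\#\T_\Omega)^{-1/n}\approx (\#\T_\Y)^{-1/(n+1)}$ for the tensor-product grid with $\#\T_\Y\approx M^{n+1}$, this is the dominant, $(\#\T_\Y)^{-1/(n+1)}$, term and it dictates the rate in \eqref{eq:fd_error}. Dividing through by $\|\orsf-\bar Z\|_{L^2(\Omega)}$ and using $\Y\approx|\log(\#\T_\Y)|$ gives \eqref{eq:fd_error}.

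For the state estimate \eqref{eq:fd_errorstate} I would write $\tr(\bar v - \bar V) = \mathbf H\orsf - \mathbf H_{\T_\Y}\bar Z = (\mathbf H - \mathbf H_{\T_\Y})\orsf + \mathbf H_{\T_\Y}(\orsf - \bar Z)$, bound the first summand in $\Hs$ by Theorem~\ref{TH:fl_error_estimates} (this is where the regularity $\orsf\in H^1(\Omega)\subset\Ws$ from Lemma~\ref{LM:control_reg_2} and Remark~\ref{rm:regularity_control_r} under \eqref{ab_condition} is used) and the second by continuity of $\mathbf H_{\T_\Y}$ together with \eqref{eq:fd_error}; the $\Hs$-norm on $\mathbf H_{\T_\Y}(\orsf-\bar Z)$ is handled through the trace estimate \eqref{Trace_estimate} and the discrete coercivity of $a_\Y$ (Remark~\ref{rem:equivalent}). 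Combining and again using $\Y\approx|\log(\#\T_\Y)|$ delivers \eqref{eq:fd_errorstate}.

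The main obstacle I anticipate is bookkeeping: correctly choosing the test functions $\Pi_{\T_\Omega}\bar Z$ and $\Pi_{\T_\Omega}\orsf$ so that \eqref{o_p} cleanly removes the projection inside inner products, and then splitting the resulting cross-terms through the two auxiliary problems \eqref{eq:aux1}--\eqref{eq:aux2} in exactly the right order so that the two sign-definite terms ($-\|\mathbf G\text{-type}\|^2$ for the truncated adjoint and $-\|\mathbf H_{\T_\Y}(\cdot)\|^2$ for the discrete adjoint) can be discarded and no uncontrolled $\|\orsf-\bar Z\|^2$ remains on the right. Everything else is a direct appeal to Theorem~\ref{TH:fl_error_estimates}, the trace inequality, the stability of the discrete solution operators, and the $L^2$-projection estimate \eqref{o_p:aprox}.
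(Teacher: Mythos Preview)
Your overall skeleton---test the two variational inequalities, insert the auxiliary adjoints $R$ and $Q$, and finish with $(\mathbf H-\mathbf H_{\T_\Y})\orsf + \mathbf H_{\T_\Y}(\orsf-\bar Z)$ for the state---matches the paper. In particular your treatment of the ``adjoint'' block $(\tr(\bar p-\bar P),\bar Z-\orsf)$ via $\bar p-R$, $R-Q$, and the sign-definite piece $(\tr(Q-\bar P),\bar Z-\orsf)=-\|\mathbf H_{\T_\Y}(\orsf-\bar Z)\|_{L^2(\Omega)}^2\le0$ is exactly the paper's Step~2, and Step~5 is identical to yours.

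The gap is in the projection term. After adding the two tested inequalities one gets
\[
\mu\|\orsf-\bar Z\|_{L^2(\Omega)}^2 \le (\tr(\bar p-\bar P),\bar Z-\orsf)_{L^2(\Omega)} + (\tr\bar P+\mu\bar Z,\Pi_{\T_\Omega}\orsf-\orsf)_{L^2(\Omega)}=\mathrm{I}+\mathrm{II}.
\]
You assert that \eqref{o_p} lets you ``replace $\Pi_{\T_\Omega}\orsf$ by $\orsf$ in inner products against discrete functions'' and that $\mathrm{II}$ becomes $\mu\|\orsf-\Pi_{\T_\Omega}\orsf\|_{L^2(\Omega)}\|\orsf-\bar Z\|_{L^2(\Omega)}$. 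This is not correct: only the $\mu\bar Z$ part of $\mathrm{II}$ vanishes by orthogonality, since $\bar Z\in\mathbb P_0(\T_\Omega)$, whereas $\tr\bar P\in\U(\T_\Omega)$ is piecewise $\mathcal P_1$, so $(\tr\bar P,\Pi_{\T_\Omega}\orsf-\orsf)_{L^2(\Omega)}\neq0$ and there is no factor $\|\orsf-\bar Z\|_{L^2(\Omega)}$ to peel off. A crude Cauchy--Schwarz bound then leaves a free-standing $O(h_{\T_\Omega})$ term on the right, and Young's inequality only yields $\|\orsf-\bar Z\|_{L^2(\Omega)}\lesssim h_{\T_\Omega}^{1/2}$, which is half an order short of \eqref{eq:fd_error}.

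The paper's Step~3 fixes exactly this: it rewrites $\mathrm{II}$ as $(\tr\bar p+\mu\orsf,\Pi_{\T_\Omega}\orsf-\orsf)+\mu(\bar Z-\orsf,\Pi_{\T_\Omega}\orsf-\orsf)+(\tr(\bar P-Q),\Pi_{\T_\Omega}\orsf-\orsf)+(\tr(Q-R)+\tr(R-\bar p),\Pi_{\T_\Omega}\orsf-\orsf)$. The first summand is the decisive one: by orthogonality of $\Pi_{\T_\Omega}$ it equals $(\tr\bar p+\mu\orsf-\Pi_{\T_\Omega}(\tr\bar p+\mu\orsf),\Pi_{\T_\Omega}\orsf-\orsf)$ and hence, via \eqref{o_p:aprox} applied to \emph{both} factors, is $O(h_{\T_\Omega}^2)$ (this uses $\tr\bar p,\orsf\in H^1(\Omega)$ from Remark~\ref{rm:regularity_control_r}). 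The remaining summands each carry either a factor $\|\orsf-\bar Z\|_{L^2(\Omega)}$ (the second and third, the latter through $\|\tr(\bar V-V(\orsf))\|_{L^2(\Omega)}\lesssim\|\bar Z-\orsf\|_{L^2(\Omega)}$) or are higher order (the fourth, via Theorem~\ref{TH:fl_error_estimates}). That is the missing ingredient in your sketch: the auxiliary problems $Q,R$ are used \emph{again} on the projection term $\mathrm{II}$, not only on $\mathrm{I}$, precisely to manufacture the factor $\|\orsf-\bar Z\|_{L^2(\Omega)}$ (or an $h_{\T_\Omega}^2$) in every piece.
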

\begin{proof} We proceed in 5 steps.

\noindent \boxed{1} Since $\dZad \subset \Zad$, we set $\rsf = \bar{Z}$ in 
the variational inequality of 
\eqref{op_truncated} to write
\[
 (\tr \bar{p} + \mu \orsf , \bar{Z} - \orsf )_{L^2(\Omega)} \geq 0.
\]
On the other hand, setting $ Z = \Pi_{\T_{\Omega}} \orsf \in \dZad$, with $\Pi_{\T_{\Omega}}$ defined by \eqref{o_p}, in the variational inequality of \eqref{fd_op}, and adding and subtracting $\orsf$, we derive
\[
(\tr \bar{P} + \mu \bar{Z}, \Pi_{\T_{\Omega}} \orsf - \orsf )_{L^2(\Omega)} 
+
(\tr \bar{P} + \mu \bar{Z}, \orsf - \bar{Z} )_{L^2(\Omega)} 
\geq 0.
\]
Consequently, adding the derived expressions we arrive at
\[
(\tr (\bar{p} - \bar{P}) + \mu (\orsf - \bar{Z}), \bar{Z} - \orsf  )_{L^2(\Omega)} 
+
(\tr \bar{P} + \mu \bar{Z}, \Pi_{\T_{\Omega}} \orsf - \orsf )_{L^2(\Omega)} \geq 0, 
\]
and then
\[
\mu \| \orsf - \bar{Z} \|^2_{L^2(\Omega)} \leq 
(\tr (\bar{p} - \bar{P}), \bar{Z} - \orsf  )_{L^2(\Omega)} 
+
(\tr \bar{P} + \mu \bar{Z}, \Pi_{\T_{\Omega}} \orsf - \orsf )_{L^2(\Omega)} = 
\textrm{I} + \textrm{II}.
\]

\noindent \boxed{2} The estimate for the term \textrm{I} follows immediately as a consequence of the arguments employed in the proof of Theorem \ref{TH:variational_error}, \EO{which only rely on the regularity $\orsf \in \Ws$ given in Lemma \ref{LM:control_reg_2} and the fact that $\usf_d \in \Ws$. To be precise, we have}
\[
 | \textrm{I} | \lesssim 
\Y^{2s} (\# \T_{\Y})^{-\frac{1+s}{n+1}} \left( \| \orsf \|_{\Ws} + 
\| \usfd \|_{\Ws} \right) \| \orsf - \bar{Z}  \|_{L^2(\Omega)}.
\]

\noindent \boxed{3} We estimate the term \textrm{II} \EO{by using the solutions to the problems \eqref{eq:aux1}
and \eqref{eq:aux2}:
\begin{multline*}
\textrm{II}  = ( \tr \bar{P} + \mu \bar{Z}, \Pi_{\T_{\Omega}}  \orsf - \orsf )_{L^2(\Omega)} 
=
( \tr \bar{p} + \mu \orsf, \Pi_{\T_{\Omega}}  \orsf - \orsf )_{L^2(\Omega)} + \mu (\bar{Z} - \orsf, \Pi_{\T_{\Omega}}  \orsf - \orsf)
\\
+ ( \tr (\bar{P}- Q), \Pi_{\T_{\Omega}}  \orsf - \orsf )_{L^2(\Omega)} 
+ ( \tr (Q \pm R - \bar{p} ), \Pi_{\T_{\Omega}}  \orsf - \orsf )_{L^2(\Omega)}
= \textrm{II}_1 + \textrm{II}_2 + \textrm{II}_3 + \textrm{II}_4.
\end{multline*}
Invoking the definition \eqref{o_p} of $\Pi_{\T_{\Omega}}$, we arrive at
\[
 \textrm{II}_1 = ( \tr \bar{p} + \mu \orsf + \Pi_{\T_{\Omega}}(\tr \bar{p} + \mu \orsf), \Pi_{\T_{\Omega}}  \orsf - \orsf )_{L^2(\Omega)},
\]
which by using \eqref{o_p:aprox}, yields $|\textrm{II}_1| \lesssim h^2_{\T_{\Omega}} \| \tr \bar{p} (\orsf) + \mu \orsf \|_{H^1(\Omega)} \| \orsf \|_{H^1(\Omega)}$. Remark \ref{rm:regularity_control_r} guarantees that both $\orsf$ and $\tr \bar{p}$ belong to $\in H^1(\Omega)$. The term $\textrm{II}_2$ is controlled by a trivial application of the Cauchy-Schwarz inequality. To estimate $\textrm{II}_3$, we invoke the stability of the discrete problem \eqref{fd_adjoint} and the estimate \eqref{o_p:aprox} to conclude
\[
|\textrm{II}_3| \lesssim h_{\T_{\Omega}} \| \tr (\bar{V}- V(\orsf)) \|_{L^2(\Omega)} \| \orsf\|_{H^1(\Omega)} 
\lesssim h_{\T_\Omega} \| \bar{Z} - \orsf \|_{L^2(\Omega)} \| \orsf\|_{H^1(\Omega)},  
\]
where in the last inequality we used the discrete stability of \eqref{fd_a}. The estimate for the term $R-\bar{p}$ in $\textrm{II}_4$ follows directly from Theorem~\ref{TH:fl_error_estimates}. In fact,
\[
\| \tr (R - \bar{p} ) \|_{L^2(\Omega)} \lesssim \Y^{2s} (\# \T_{\Y})^{-\frac{1+s}{n+1}} \| \tr \bar{v} - \usf_d\|_{\Ws}.
\]
Remark \ref{rm:regularity_control_r} yields $\tr \bar{v} \in \Ws$. The remainder term $Q-R$ is controlled by using the discrete stability of problem \eqref{eq:aux2} together with Theorem~\ref{TH:fl_error_estimates}:
\[
\| \tr (Q - R) \|_{ L^2(\Omega) }  
\lesssim \| \tr(V(\orsf) - \bar{v}) \|_{L^2(\Omega)} \lesssim \Y^{2s}
(\# \T_{\Y})^{-\frac{1+s}{n+1}} \| \orsf \|_{\Ws}.
\]
These estimates yields $|\textrm{II}_4| \lesssim \Y^{2s}
(\# \T_{\Y})^{-\frac{2+s}{n+1}}(\| \orsf\|_{\Ws} + \| \usf_d \|_{\Ws}) \| \orsf\|_{H^1(\Omega)}$.
}

\noindent \boxed{4} \EO{The estimates derived in Step 3, in conjunction with appropriate applications of the Young's inequality and the bound for \textrm{I} obtained in Step 2, yield the desired estimate \eqref{eq:fd_error}.}

\noindent \boxed{5} Finally, the estimate \eqref{eq:fd_errorstate} follows easily. In fact, since $\tr \bar{v} = \tr \bar{v}(\orsf) = \mathbf{H}\orsf$ and $\tr \bar{V} = \tr \bar{V}(\bar{Z}) = \mathbf{H}_{\T_{\Y}} \bar{Z}$, we conclude
that
\begin{align*}
\| \tr (\bar{v} -  \bar{V})\|_{\Hs} & = \|  \mathbf{H} \orsf  - \mathbf{H}_{\T_{\Y}} \bar{Z} \|_{\Hs} 
\\
& \leq  \|  (\mathbf{H} - \mathbf{H}_{\T_{\Y}}) \orsf \|_{\Hs}
+ \|  \mathbf{H}_{\T_{\Y}} (\orsf  -  \bar{Z}) \|_{\Hs},
\end{align*}
which, as a consequence of the fact that $\orsf \in \Ws$, \cite[Remark 5.6]{NOS}, the continuity of $\mathbf{H}_{\T_{\Y}}$ and \eqref{eq:fd_error} yield \eqref{eq:fd_errorstate}. This concludes the proof.
\end{proof}

We now present the following consequence of Theorem \ref{TH:fd_error}.
\begin{corollary}[fractional control problem: error estimate]
\label{CR:fd}
Let $(\bar{V},\bar{Z}) $ $\in \V(\T_\Y) \times \Zad$ solves the fully-discrete control problem and $\bar{U} \in \U(\T_{\Omega})$ be defined as in \eqref{Hinze_Ufd}. If $\usf_d \in \Ws$, \EO{and $\asf$ and $\bsf$ satisfy \eqref{ab_condition} for $s \in (0,\tfrac{1}{2}]$}, then we have 
\begin{equation}
\label{fd2}
  \| \ozsf - \bar{Z} \|_{L^2(\Omega)} \lesssim  |\log (\# \T_{\Y})|^{2s}(\# \T_{\Y})^{\frac{-1}{n+1}} 
\left( \| \orsf \|_{H^1(\Omega)} + \| \usfd \|_{\Ws} \right),
\end{equation}
and
\begin{equation}
\label{fd1}
\| \ousf - \bar{U} \|_{\Hs} \lesssim  |\log (\# \T_{\Y})|^{2s}(\# \T_{\Y})^{\frac{-1}{n+1}} 
\left( \| \orsf \|_{H^1(\Omega)} + \| \usfd \|_{\Ws}  \right).
\end{equation} 
\end{corollary}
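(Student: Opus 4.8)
The plan is to combine the exponential-convergence estimates of Lemma~\ref{LE:exp_convergence} for the truncated optimal control problem with the discretization error estimates of Theorem~\ref{TH:fd_error} for the fully-discrete problem, exactly along the lines of the proof of Corollary~\ref{CR:variational_error2}. Recall that $(\oue(\ozsf),\ozsf)$ solves the extended optimal control problem, $(\bar{v}(\orsf),\orsf)$ the truncated one, and $(\bar{V}(\bar{Z}),\bar{Z})$ the fully-discrete one; under the stated hypotheses on $\asf$, $\bsf$ and $\usfd$, Lemma~\ref{LM:control_reg_2} together with Remark~\ref{rm:regularity_control_r} guarantee $\orsf \in H^1(\Omega)\cap\Ws$, so that Theorem~\ref{TH:fd_error} is applicable.

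To obtain \eqref{fd2}, I would split $\ozsf - \bar{Z} = (\ozsf - \orsf) + (\orsf - \bar{Z})$ and apply the triangle inequality. The first summand is controlled by \eqref{control_exp}, namely $\| \ozsf - \orsf \|_{L^2(\Omega)} \lesssim e^{-\sqrt{\lambda_1}\Y/4}\| \orsf \|_{L^2(\Omega)}$, and since $H^1(\Omega)\hookrightarrow L^2(\Omega)$ we may bound $\| \orsf \|_{L^2(\Omega)} \leq \| \orsf \|_{H^1(\Omega)}$; the second summand is controlled by \eqref{eq:fd_error}. Choosing $\Y \approx |\log(\# \T_{\Y})|$ (see \cite[Remark~5.5]{NOS}), the exponential factor $e^{-\sqrt{\lambda_1}\Y/4}$ is dominated by $(\# \T_{\Y})^{-1/(n+1)}$, and adding the two bounds yields \eqref{fd2}.

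For the state estimate \eqref{fd1}, I would decompose $\ousf - \bar{U} = (\ousf - \tr \bar{v}) + (\tr \bar{v} - \bar{U})$. For the first term, since $\ousf = \tr \oue(\ozsf)$ and $\tr \bar{v} = \tr \bar{v}(\orsf)$, I would invoke the energy estimate established in Step~4 of the proof of Lemma~\ref{LE:exp_convergence}, that is $\| \nabla(\oue(\ozsf) - \bar{v}(\orsf)) \|_{L^2(y^{\alpha},\C)} \lesssim e^{-\sqrt{\lambda_1}\Y/4}\| \orsf \|_{L^2(\Omega)}$, together with the trace estimate \eqref{Trace_estimate} and Remark~\ref{rem:equivalent}, to obtain $\| \ousf - \tr \bar{v} \|_{\Hs} \lesssim e^{-\sqrt{\lambda_1}\Y/4}\| \orsf \|_{H^1(\Omega)}$. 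The second term equals $\| \tr(\bar{v} - \bar{V}) \|_{\Hs}$ and is bounded by \eqref{eq:fd_errorstate}. Combining, and again using $\Y \approx |\log(\# \T_{\Y})|$ to absorb the exponential contribution into $(\# \T_{\Y})^{-1/(n+1)}$, gives \eqref{fd1}.

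The argument is essentially bookkeeping; the only points that require a little care are: (i) measuring the state error in the $\Hs$-norm rather than in $L^2(\Omega)$, which forces one to return to the gradient estimate inside Lemma~\ref{LE:exp_convergence} and upgrade it via the trace theorem, instead of quoting the $L^2(\Omega)$-bound \eqref{state_exp} verbatim; and (ii) the passage from exponential decay in $\Y$ to the algebraic rate $(\# \T_{\Y})^{-1/(n+1)}$, which is legitimate precisely because of the logarithmic scaling $\Y \approx |\log(\# \T_{\Y})|$. No genuine obstacle is expected.
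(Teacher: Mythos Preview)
Your proposal is correct and follows essentially the same approach as the paper: split via the truncated problem using the triangle inequality, apply Lemma~\ref{LE:exp_convergence} and Theorem~\ref{TH:fd_error} to the respective pieces, and absorb the exponential term using $\Y \approx |\log(\#\T_\Y)|$. Your remark~(i) is in fact slightly more careful than the paper, which cites \eqref{state_exp} (an $L^2(\Omega)$ bound) for the $\Hs$-estimate of $\ousf - \tr\bar{v}$; the gradient bound from Step~4 of Lemma~\ref{LE:exp_convergence} together with \eqref{Trace_estimate}, exactly as you indicate, is what is actually needed.
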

\begin{proof}
We recall that $(\oue,\ozsf) \in \HL(y^{\alpha},\C) \times \Zad$ and $(\bar{v},\orsf) \in \HL(y^{\alpha},\C_{\Y}) \times \Zad$ solve the extended and truncated optimal control problems, respectively. Then, Lemma \ref{LE:exp_convergence} and Theorem \ref{TH:fd_error} imply
\begin{align*}
\| \ozsf - \bar{Z} \|_{L^2(\Omega)} & \leq 
 \| \ozsf - \orsf  \|_{L^2(\Omega)} +  \| \orsf - \bar{Z}\|_{L^2(\Omega)} 
\\
& \lesssim \left( e^{-\sqrt{\lambda_1} \Y/4} 
+  (\# \T_{\Y})^{-\frac{1}{n+1}} \right) \left( \| \orsf \|_{H^1(\Omega)} + \| \usfd \|_{\Ws} \right)
\\
& \lesssim |\log(\# \T_{\Y})|^{2s}(\# \T_{\Y})^{-\frac{1}{n+1}} \left( \| \orsf \|_{H^1(\Omega)} + \| \usfd \|_{\Ws} \right),
\end{align*}
where we have used that $\Y \approx \log(\# (\T_\Y))$; see \cite[Remark 5.5]{NOS} for details. This gives the desired estimate \eqref{fd2}. In order to derive \eqref{fd1}, we proceed as follows:
\begin{align*}
\| \ousf - \bar{U} \|_{\Hs} 
 & \leq 
\| \ousf - \tr \bar{v}  \|_{\Hs} +  \| \tr \bar{v} - \bar{U} \|_{\Hs} 
\\
& \lesssim |\log(\# \T_{\Y})|^{2s}(\# \T_{\Y})^{-\frac{1}{n+1}}
\left( \| \orsf \|_{H^1(\Omega)} + \| \usfd \|_{\Ws} \right),
\end{align*}
where we have used \eqref{state_exp} and \eqref{eq:fd_errorstate}. This concludes the proof.
\end{proof}
\section{Numerical Experiments}
\label{sec:numerics}

In this section, we illustrate the performance of the fully-discrete scheme proposed and analyzed in \S \ref{subsec:fd}  
approximating the fractional optimal control problem \eqref{Jintro}-\eqref{cc}, and the sharpness of the error estimates derived in Theorem \ref{TH:fd_error} and Corollary \ref{CR:fd}.

\subsection{Implementation}
The implementation has been carried out within
the MATLAB$^\copyright$ software library {\it{i}}FEM~\cite{chen2009ifem}.
The stiffness matrices of the discrete system \eqref{fd_op} are assembled exactly, and
the respective forcing boundary term are computed by a quadrature formula which is exact for polynomials of degree $4$. 
The resulting linear system is solved by using the built-in \emph{direct solver} of MATLAB$^\copyright$.
More efficient techniques for preconditioning are currently under investigation.
To solve the minimization problem, we use the gradient based minimization algorithm \emph{fmincon} of MATLAB$^\copyright$. The optimization algorithm stops when the gradient of the cost function is less than or equal to $10^{-8}$.

We now proceed to derive an exact solution to the fractional optimal control problem 
\eqref{Jintro}-\eqref{cc}. To do this, let $n = 2$, $\mu =1$, $\Omega = (0,1)^2$, and 
$c(x') \equiv 0$ and  $A(x') \equiv 1$ in \eqref{second_order}. Under this setting,
the eigenvalues and eigenfunctions of $\mathcal{L}$ are:
\[
\lambda_{k,l} = \pi^2 (k^2 + l^2), \quad \varphi_{k,l}(x_1,x_2) = \sin(k \pi x_1) \sin(l\pi x_2)  
\quad k, l \in \mathbb{N}.
\]
Let $\ousf$ be the solution to   
$
\mathcal{L}^s \ousf = \fsf + \ozsf
$
in $\Omega$, $\usf = 0$  on  $\partial \Omega$,
which is a modification of problem \eqref{fractional}, since we added 
the forcing term $\fsf$. If $\fsf = \lambda_{2,2}^s \sin(2 \pi x_1) \sin(2\pi x_2) - \ozsf$, 
then by \eqref{def:second_frac}, we have $\ousf = \sin(2 \pi x_1) \sin(2\pi x_2)$. Now, we
set $\opsf = -\mu \sin(2 \pi x_1) \sin(2 \pi x_2)$, which by invoking Definition 
\ref{def:fractional_adjoint}, yields
$
\usf_d = (1 + \mu \lambda_{2,2}^s) \sin(2 \pi x_1) \sin(2\pi x_2) .
$
The projection formula \eqref{projection_formula} allows to write
$\ozsf = \min \left\{ \bsf , \max \left\{ \asf , - \opsf/\mu \right\} \right\}$. 
Finally, we set $\asf = 0$ and $\bsf = 0.5$, and we see easily that, for any $s \in (0,1)$, 
$\ozsf \in H_0^1(\Omega) \subset \Ws$. 

Since we have an exact solution $(\ousf,\ozsf)$ to \eqref{Jintro}-\eqref{cc}, we compute
the error $\| \ousf - \bar{U} \|_{\Hs}$ by using \eqref{Trace_estimate} and computing 
$\| \nabla\left( \bar{\ue} - \bar{V} \right) \|_{L^2(y^{\alpha},\C)}$
as follows:
\[
\| \nabla\left( \bar{\ue} - \bar{V} \right) \|_{L^2(y^{\alpha},\C)}^2 
= d_s \int_\Omega (f+\ozsf) \tr \left(  \bar{\ue} - \bar{V} \right),
\] 
which follows from Galerkin orthogonality. Thus, we avoid
evaluating the weight $y^{\alpha}$
and reduce the computational cost.
The right hand side of the equation above is computed by a quadrature formula
which is exact for polynomials of degree 7. 

\subsection{Uniform refinement versus anisotropic refinement}
\label{s:unif}
At the level of solving the state equation \eqref{alpha_harm_L_weak},
a competitive performance of anisotropic over quasi-uniform refinement is discussed in \cite{CNOS2,NOS}.
Here we explore the advantages of using the anisotropic refinement
developed in \S\ref{sec:apriori}
when solving the fully-discrete problem proposed in \S\ref{subsec:fd}.
Table~\ref{t:unif_rate_s02} shows the error in the control and the state for 
uniform (un) and anisotropic (an) refinement for $s = 0.05$. \#DOFs denotes the degrees of 
freedom of $\T_{\Y}$. 
\begin{table}[h!]
\centering
\begin{tabular}{|c|c|c|c|c|}\hline 
 {\footnotesize \#DOFs}  &    {\footnotesize $\| \ozsf - \bar{Z} \|_{L^2(\Omega)}$ (un)} &  {\footnotesize $\| \ozsf - \bar{Z} \|_{L^2(\Omega)}$ (an)} &  {\footnotesize $\|  \ousf - \bar{U} \|_{\Hs}$ (un)} &  {\footnotesize $\|  \ousf - \bar{U} \|_{\Hs}$ (an)} \\ \hline 
  3146 &  1.46088e-01 & 5.84167e-02 & 1.50840e-01 & 8.83235e-02 \\
 10496 &  1.24415e-01 & 4.25698e-02 & 1.51756e-01 & 6.49159e-02 \\
 25137 &  1.11969e-01 & 3.08367e-02 & 1.50680e-01 & 5.04449e-02 \\
 49348 &  1.04350e-01 & 2.54473e-02 & 1.49425e-01 & 4.07946e-02 \\
 85529 &  9.82338e-02 & 2.09237e-02 & 1.48262e-01 & 3.42406e-02 \\
137376 &  9.41058e-02 & 1.81829e-02 & 1.47146e-01 & 2.93122e-02\\ \hline
\end{tabular}
\caption{\label{t:unif_rate_s02}Experimental errors of the fully-discrete scheme studied in \S \ref{subsec:fd}
on uniform (un) and anisotropic (an) refinement. A competitive performance of anisotropic refinement is observed.}
\end{table}
Clearly, the errors obtained with anisotropic refinement are almost an order 
in magnitude smaller than the corresponding errors due to uniform refinement. 
In addition, Figure~\ref{f:unif_ani_s005} shows that the anisotropic refinement 
leads to quasi-optimal rates of convergence for the optimal variables,
thus verifying Theorem \ref{TH:fd_error} and Corollary \ref{CR:fd}.
Uniform refinement produces suboptimal rates of convergence.

\begin{figure}[h!]
\centering
\includegraphics[width=0.48\textwidth]{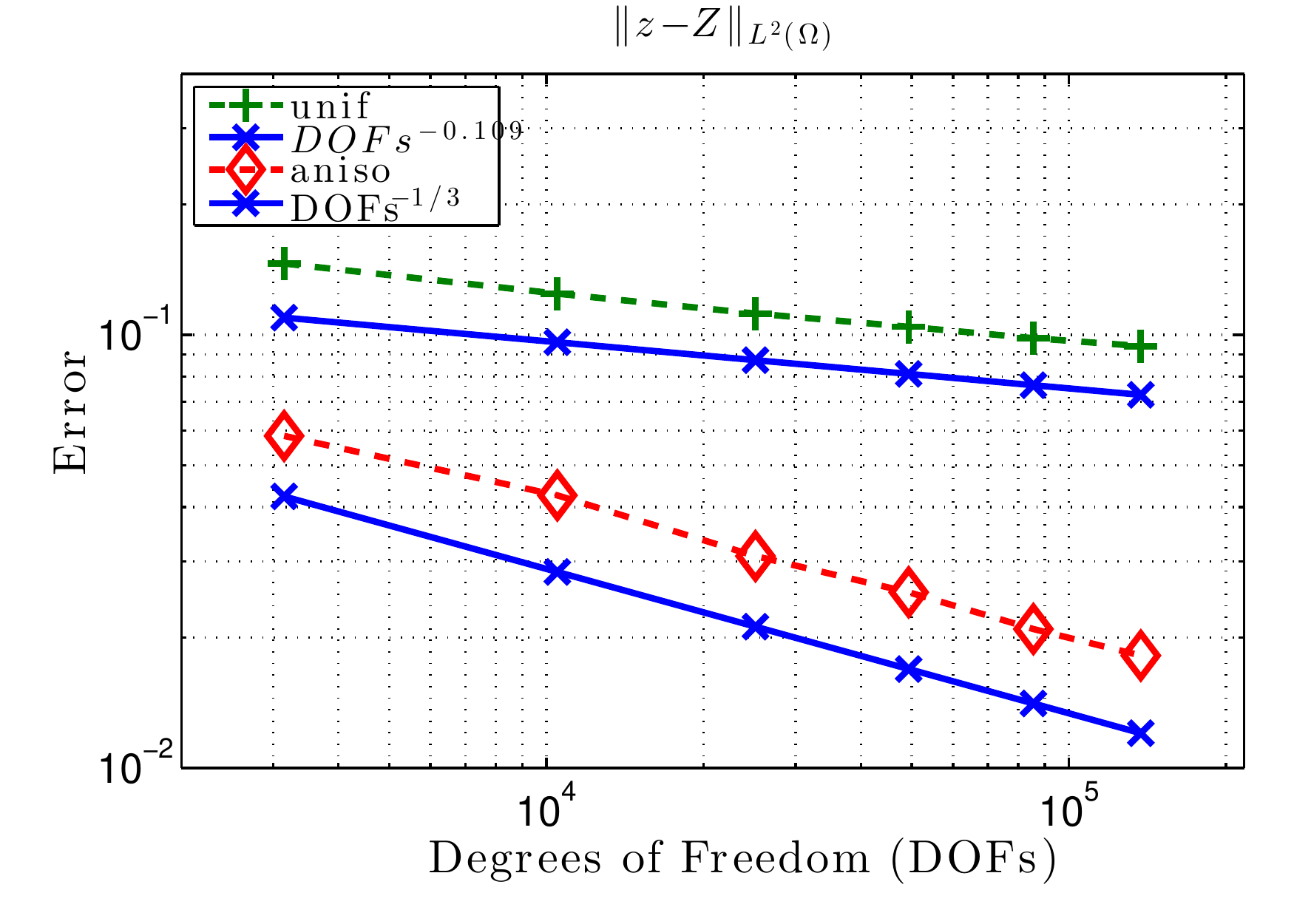}
\includegraphics[width=0.48\textwidth]{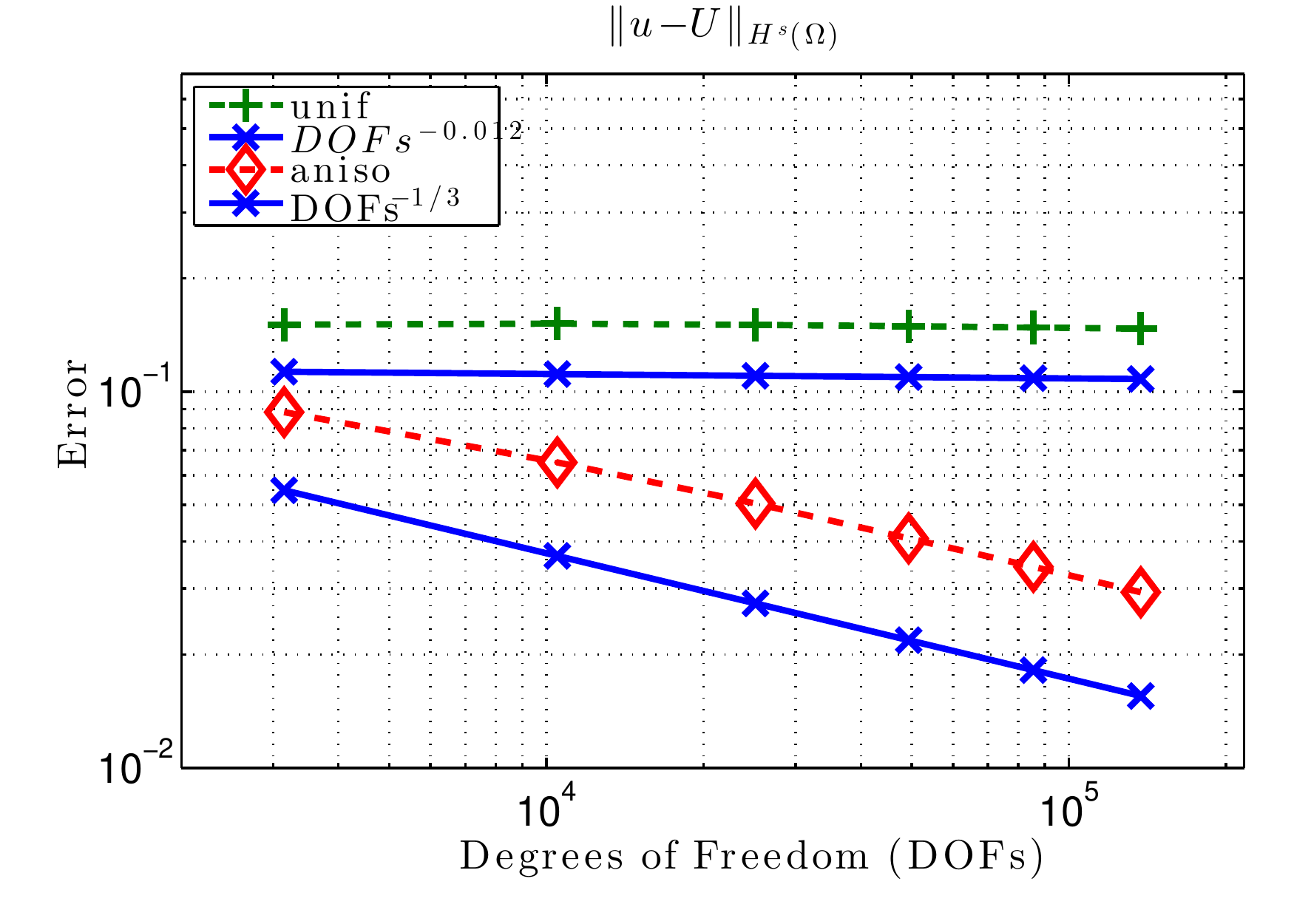}
\caption{\label{f:unif_ani_s005}
Computational rates of convergence for both quasi-uniform and anisotropic mesh refinements for $s = 0.5$. The left panel shows the corresponding rates for the control and the right one for the state. The Computational convergence rates on anisotropic meshes are in agreement with the estimates of Corollary~\ref{CR:fd}.}
\end{figure}

\subsection{Anisotropic refinement}
\label{s:grad}
The asymptotic relations 
$$\|\ozsf - \bar{Z} \|_{L^2(\Omega)} \approx (\# \T_{\Y})^{-\frac{1}{3}}, \qquad
\|\nabla(\oue - \bar{V}) \|_{L^2(y^{\alpha},\C)} \approx (\# \T_{\Y})^{-\frac{1}{3}}$$ 
are shown in Figure~\ref{f:grad_rate_s02} which illustrate the quasi-optimal decay rate of our fully-discrete scheme of \S\ref{subsec:fd} for all choices of the parameter $s$ considered. These examples show that anisotropy in the extended dimension is essential to recover optimality. We also present $L^2$-error estimates for the state variable, which decay as $(\# \T_{\Y})^{-\frac{2}{3}}$. The latter is not discussed in this paper and is part of a future work.

\begin{figure}[h!]
\centering
\includegraphics[width=0.44\textwidth]{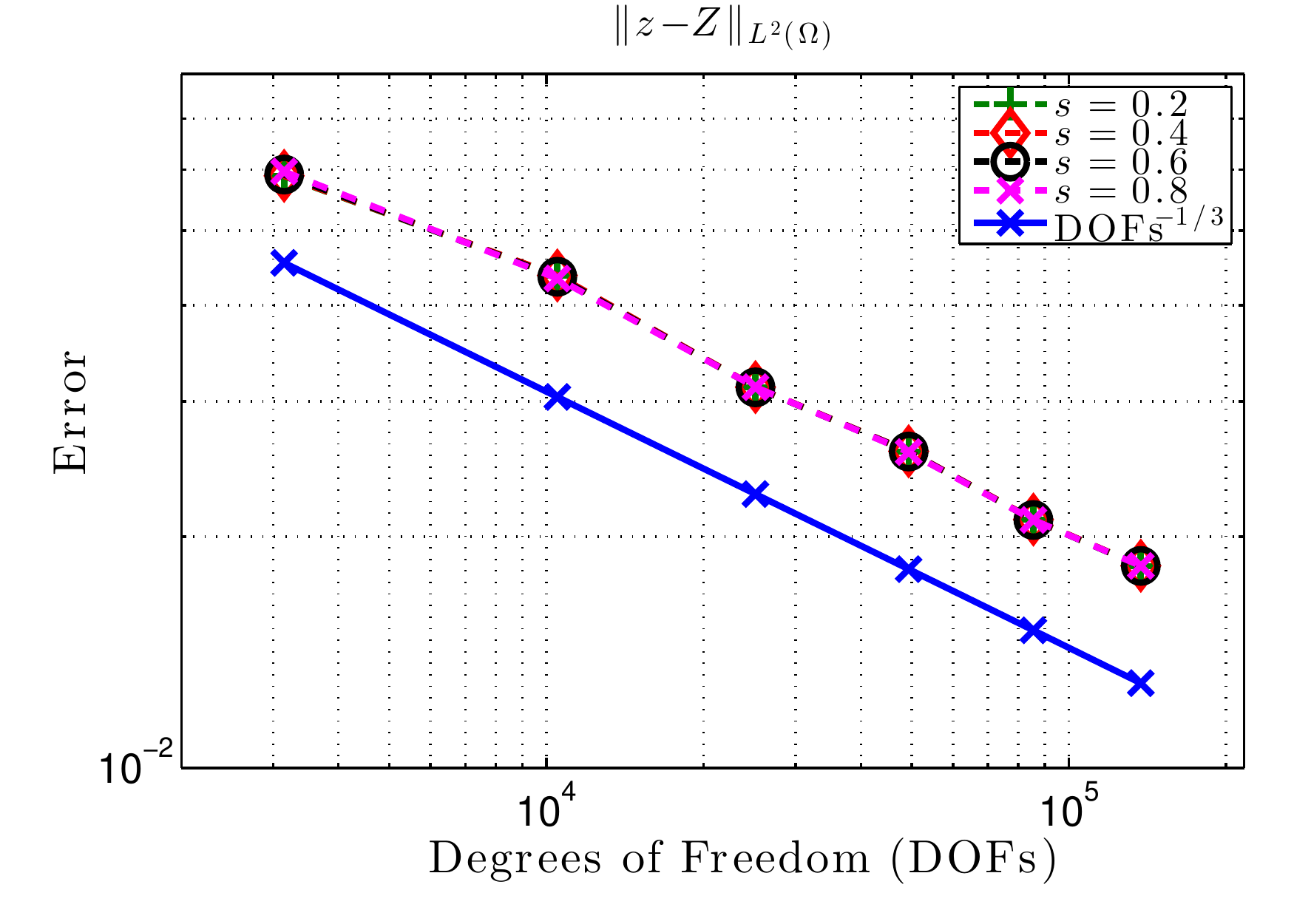} 
\includegraphics[width=0.44\textwidth]{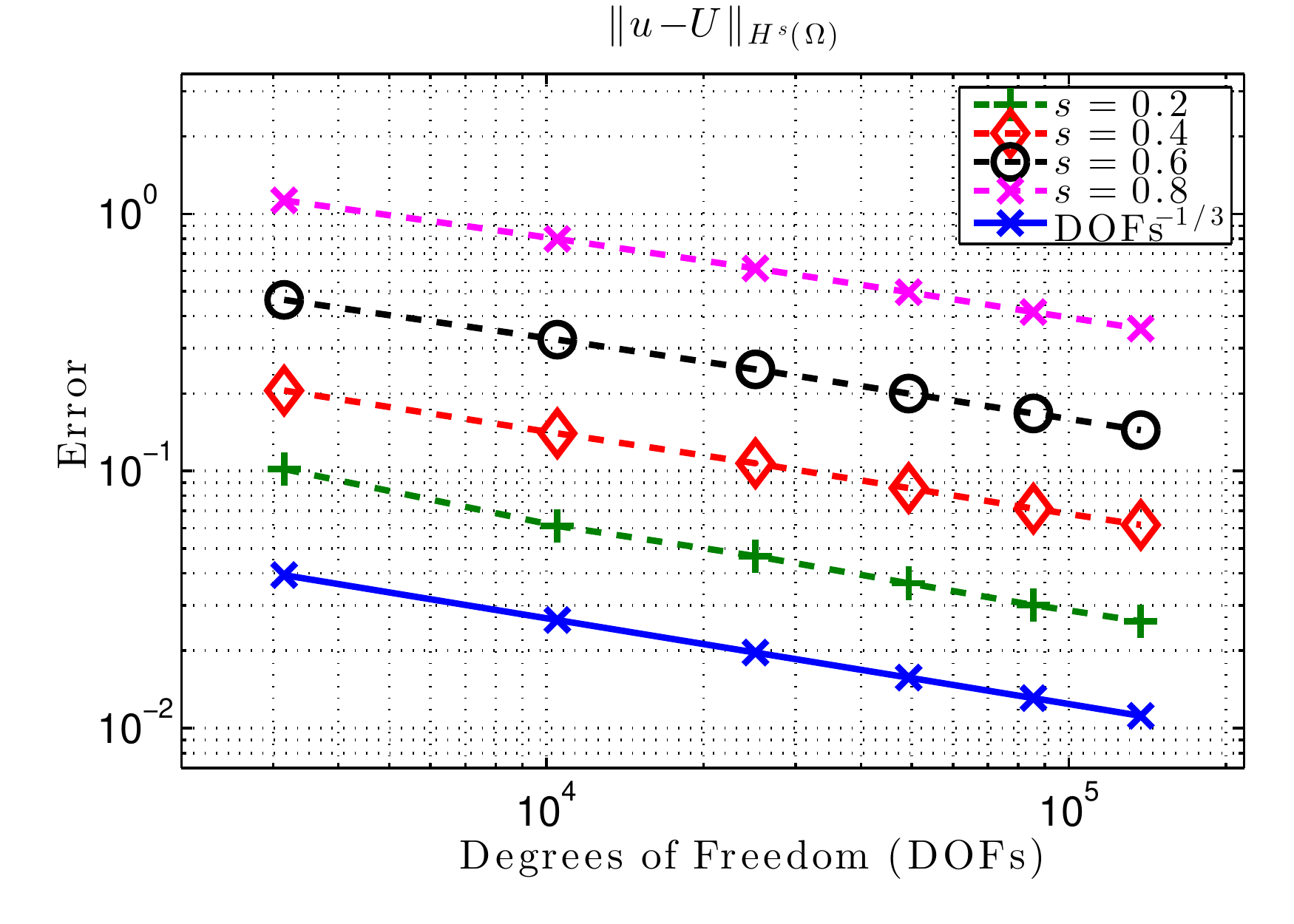}
\includegraphics[width=0.44\textwidth]{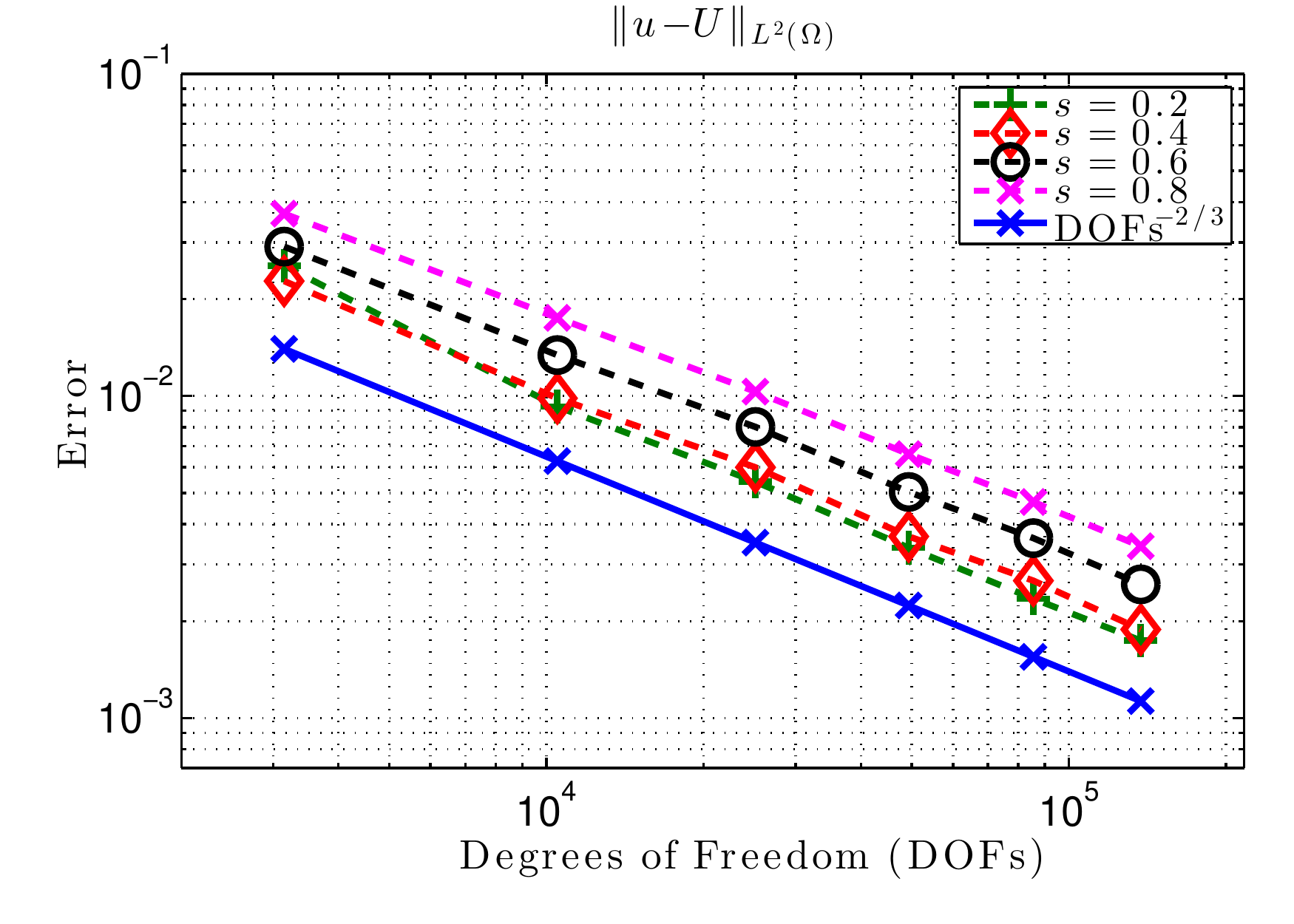}
\caption{\label{f:grad_rate_s02}
Computational rates of convergence for the fully-discrete scheme proposed in \S\ref{subsec:fd} on anisotropic meshes for
$n = 2$ and $s = 0.2$, $0.4$, $0.6$ and $s = 0.8$. The top left panel shows the decrease of the $L^2$-control error with respect to $\# \T_{\Y}$ and the top right the one for the $\Hs$-state error. In all cases we recover the rate $(\# \T_{\Y})^{-1/3}$. The bottom panel shows the rates of convergence  $(\# \T_{\Y})^{-2/3}$ for the $L^2$-state error. The
latter is not discussed in this paper.}
\end{figure}

\section*{Acknowledgment}
We would like to thank the referees for their insightful comments and for pointing out an inaccuracy in an earlier 
version of this work. We also wish to thank A. J. Salgado and B. Vexler for several fruitful discussions.

\bibliographystyle{plain}
\bibliography{biblio}

\end{document}